\theoremstyle{plain}
\newtheorem{thm}{Theorem}[section]
\newtheorem{cor}[thm]{Corollary}
\newtheorem{lem}[thm]{Lemma}
\newtheorem{prop}[thm]{Proposition}
\theoremstyle{definition}
\newtheorem{df}[thm]{Definition}
\theoremstyle{remark}
\newtheorem{rmk}[thm]{Remark}
\newtheorem{ejp}[thm]{Example}
\DeclareMathOperator{\diam}{diam}
\DeclareMathOperator{\dist}{dist}
\DeclareMathOperator{\wlim}{L^+}
\DeclareMathOperator{\alim}{L^{--}}
\newcommand{\hyper}[1]{{\mathcal K}(#1)}
\newcommand{\continua}[1]{{\mathcal C}(#1)}
\newcommand{\radio}{\rho}
\newcommand{\tra}{T}
\newcommand{\R}{\mathbb R}
\newcommand{\Z}{\mathbb Z}
\newcommand{\reparam}{\mathcal{R}}
\renewcommand{\epsilon}{\varepsilon}
\newcommand{\tauu}{\hat\tau}
\newcommand{\tu}{\hat t}
\title[Fields of Cross Sections and Expansive Flows]{Discrete and Continuous Topological Dynamics: Fields of Cross Sections and 
Expansive Flows}
\author[A. Artigue]{Alfonso Artigue}
\email{artigue@unorte.edu.uy}
\address{DMEL, Universidad de la Rep\'ublica, Uruguay}
\begin{document}


\begin{abstract}
In this article we consider the general problem of translating 
definitions and results from 
the category of discrete-time dynamical systems to the category of flows.
We consider the dynamics of homeomorphisms and flows on compact metric spaces, 
in particular Peano continua.
As a translating tool, we construct continuous, symmetric and monotonous 
fields of local cross sections for an arbitrary flow without singular points. 
Next, we use this structure in the study of expansive flows on Peano continua.
We show that expansive flows admit no stable point and 
that every point contains a non-trivial continuum in its stable set. 
As a corollary we obtain that no 
Peano continuum with an open set homeomorphic with the plane
admits an expansive flow.
In particular compact surface admits 
no expansive flow without singular points.																																																																																																																																																																																																																																																																																																																																																																																																																																																																																																																																																																																																																																																																																																																																																																																																																																																																																																																																																																																																																																																																																																																																																																																																																																																																																																																																																																																																																																																																																																																																																																																																																																																																																																																																																																																																																																																																																																																																																																																																																																																																																																																																																																																																																																																																																																																																																																																																																																																																																																																																																																																																																																																																																																																																																																																																																																																																																																																																																																																																																																																																																																																																																																																																																																																																																																																																																																																																																																																																																																																																																																																																																																																																																																																																																																																																																																																																																																																																																																																																																																																																																																																																																																																																																						
\end{abstract}
\maketitle
\section{Introduction}

In the study of dynamical systems
\emph{time} is usually modeled as discrete or continuous. 
A discrete-time dynamical system can be understood as an action of the integers, 
induced by a homeomorphism or a diffeomorphism. 
The continuous time can be represented by the real 
numbers, and the dynamics is generated by a vector field or a flow.
Both categories are strongly related and several definitions and 
results can be \emph{translated}. 
For example, expansivity is defined in both categories. 
Recall that a homeomorphism $f\colon X\to X$ of a compact metric space 
$(X,\dist)$ is an \emph{expansive homeomorphism} if there is 
$r>0$ such that $\dist(f^n(x),f^n(y))\leq r$ for all $n\in \Z$ implies $x=y$.
For flows, the definition of \cite{BW} is a good translation 
in terms of the results that it allows to recover. 
It saids that a flow $\phi\colon\R\times X\to X$ is 
an \emph{expansive flow} if for all 
$\epsilon>0$ there is $\delta>0$ such that if 
$\dist(\phi_{h(t)}(y),\phi_t(x))<\delta$ for all $t\in\R$ with 
$h\colon\R\to\R$ an increasing homeomorphism such that 
$h(0)=0$ then there is $t\in(-\epsilon,\epsilon)$ such that $y=\phi_t(x)$.
The complexity of the definition makes the translations very difficult, 
but several techniques were developed. 
See for example \cites{KS, Th87, MSS, Pat, Oka}. 



The purpose of the present article is to develop the technique 
of local cross sections
for translating some definitions and results from discrete 
to continuous topological dynamical systems on compact metric spaces.
In order to describe our results let us consider a 
non-singular smooth flow $\phi_t\colon M\to M$, $t\in \R$, 
on a compact manifold $M$ with a transverse foliation. 
Given a point $x\in M$ 
we can take a compact disc $H_r(x)$ of radius $r>0$ 
contained in the leaf of $x$. 
If $r$ is small we have that: 
\begin{enumerate}
  \item for each $r>0$ and $x\in M$, $H_r(x)$ 
  is a local cross section containing $x$, 
  \item the map $(r,x)\mapsto H_r(x)$ is continuous,\footnote{In a smooth category, we can have continuity in the space of $C^1$ embeddings. In the context of metric spaces we will consider the Hausdorff metric between compact subsets of $X$.}
  \item (monotonous) for $t\neq 0$ small $H_r(x)$ is disjoint from $H_r(\phi_t(x))$,
  \item (symmetric) if $x,y$ are close then $y\in H(x)$ if and only if $x\in H(y)$.
\end{enumerate}
Such a map $H$, depending on $r$ and $x$, 
is an example of what we call \emph{field of cross sections}
and it is illustrated in Figure \ref{figSecs}. 

\begin{figure}[h]
  \center{\includegraphics{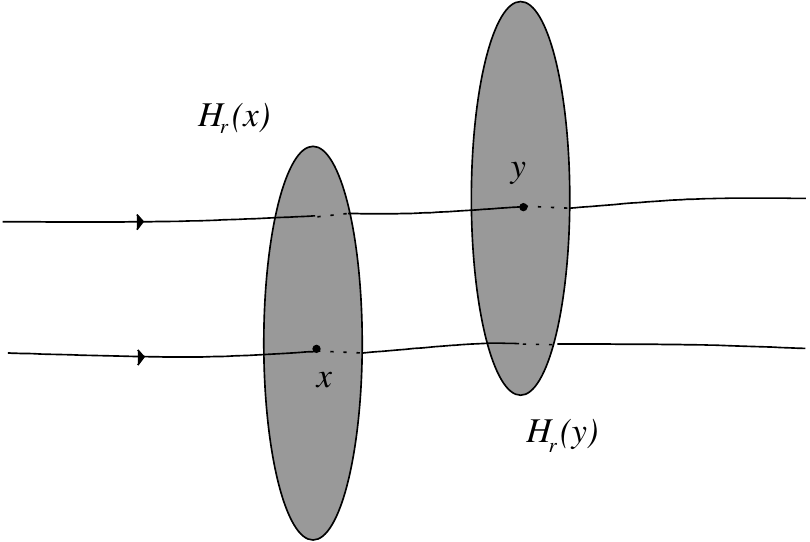}}
  \label{figSecs}
  \caption{Field of cross sections.}
\end{figure}

Then, if the flow is defined on a manifold and it admits a transverse foliation 
the construction is trivial. 
But it is known that there are smooth flows 
without a transverse foliation, see for example \cite{Goodman}. 
For these cases we have an alternative construction.
Suppose that $Y$ is the velocity field of the flow.
On a compact smooth manifold we can consider a Riemannian metric and 
define
\[
 H_r(x)=\exp_x\{v\in T_xM: v\perp Y(x), \|v\|\leq r\}
\]
where $\exp$ is the 
exponential map of the Riemannian metric and $r>0$ is small. 
This definition can be found for example in \cites{Lew, Pat}.
Unfortunately, in this setting 
items 3 and 4 above may not hold.
For continuous (non-smooth) flows on compact manifolds 
the task is even harder.
A related 
construction is given in \cite{Pat} for studying expansive flows 
of three-manifolds. 
On metric spaces local cross sections were constructed in \cite{W}.
In \cites{BW,KS,Oka,Pat} local cross sections are used 
in the study of non-smooth expansive flows on manifolds and metric spaces. 
In \cites{BW,KS,Oka} they consider a finite 
cover of flow boxes and the corresponding return maps. 
This approach is a good translating tool but 
it has some technical difficulties that makes it hard to handle. 
For example, the return maps may not be continuous at 
the boundary of the cross sections.

In this article we consider regular flows (no singular points) on compact metric spaces. 
In Section \ref{secFOCS} we develop a theory of fields of local cross sections. 
In Theorem \ref{main1} we prove that every regular flow on a compact metric space 
admits a semicontinuous, monotonous and symmetric field of cross sections. 
If in addition we have that the space is a Peano continuum, in Theorem 
\ref{secContMonSym} we conclude that
the cross sections can be assumed to vary continuously, each one being 
a connected set.
Precise definitions are given in Section \ref{secFOCS}.

As a translating tool, one may think of the cross section $H_r(x)$ as 
the ball $B_r(x)$ in the discrete time case. 
To show how this works let us translate 
the definition of wandering point. 
Recall that if $f\colon X\to X$ is a homeomorphism of a metric space 
then $x\in X$ is a \emph{wandering point} if: 
\begin{quote}
 (1) there is 
$r>0$ such that $f^n(B_r(x))\cap B_r(x)=\emptyset$ for all $n>0$. 
\end{quote}
If $\phi$ is a flow on $X$ a point $x\in X$ is said to be 
a \emph{wandering point}
if: 
\begin{quote}
(2) there are $r,t_0>0$ such that 
$\phi_t(B_r(x))\cap B_r(x)=\emptyset$ for all $t\geq t_0$. 
\end{quote}
In this definition there is a parameter $t_0$ that is not present in (1). 
Using a local cross section we have that
$x$ is a wandering point of the flow $\phi$ if and only if:
\begin{quote} (3) there is 
 $r>0$ such that $\phi_t(H_r(x))\cap H_r(x)=\emptyset$ for all $t>0$. 
\end{quote}
Notice that (3) is obtained from (1) by changing 
$f^n\mapsto \phi_t$ and $B\mapsto H$. 
This is a task that a \emph{translating machine} can do.
This machine needs to be programmed and the present paper is intended to be a contribution 
in this direction.

In Section \ref{secCWexp} we show how this machine should work in a more complex situation.
We consider 
expansive flows on Peano continua and we translate some results of 
expansive homeomorphisms.
As said in the abstract, we prove that expansive flows do not admit 
stable points. 
We show that every point contains a non-trivial continuum in its stable set 
and as a corollary we prove that no compact surface admits 
an expansive flow without singular points. 



\section{Fields of cross sections}
\label{secFOCS}

In this section we introduce the fields of compact sets, which assigns to each point a compact subset 
through the point. 
This concept is related with 
the \emph{coselections} defined
in \cite{IN}.
Fields of cross sections of a flow are a special instance 
of such fields.

In Section \ref{secCCS} we show that every regular flow 
admits a field of cross sections. For this purpose we extend the techniques of 
\cite{W} while introducing topological 1-forms. 
In Section \ref{secCFCS} we consider flows on Peano continua. 
In this case we show that every regular flow admits a continuous 
field of connected cross sections.
In Section \ref{secTFF} we introduce the transposition of fields of compact sets 
and 1-forms. These techniques are used in 
Section \ref{secMFCS} to construct monotonous fields of cross sections.
In Section \ref{secSFCS} we show that every regular flow admits a symmetric 
field of cross sections.
The main results of this section are Theorems \ref{main1} and \ref{secContMonSym}.
\subsection{Fields of compact sets}
\label{secFCS}
Let $(X,\dist)$ be a compact metric space. 
As usual, define the closed ball 
\begin{equation}
  \label{ecuBola}
  B_r(x)=\{y\in X:\dist(y,x)\leq r\}.
\end{equation}
for all $r\geq 0$ and $x\in X$. 
Also define $B_r(K)=\cup_{x\in K}B_r(x)$ if $K$ is a subset of $X$.
Denote by 
$\hyper X$ the set of compact subsets of $X$ 
equipped with the Hausdorff distance. 
If $K,L$ are compact subsets of $X$ then
the \emph{Hausdorff distance} is defined as
\[
 \dist_H(K,L)=\inf\{\epsilon>0:K\subset B_\epsilon(L),L\subset B_\epsilon(K)\}.
\]
It is known that $(\hyper X,\dist_H)$ is 
a compact metric space and a proof can be found in \cite{IN}.

\begin{df}
A \emph{field of compact sets} (or simply a \emph{field}) is a function $h\colon X\to \hyper X$ 
satisfying: 
\begin{enumerate}
  \item $x\in h(x)$ for all $x\in X$,
  \item (semicontinuity) if $x_n\to x$ and $h(x_n)\to C$ (with respect to the Hausdorff metric)
  then $C\subset h(x)$.
\end{enumerate} 
\end{df}

\begin{rmk}
  The semicontinuity property can be stated equivalently as: for all $x\in X$ and 
  $\epsilon>0$ there is $\delta>0$ such that 
  if $\dist(x,y)<\delta$ then $h(y)\subset B_\epsilon(h(x))$. 
\end{rmk}

\begin{ejp}[Two trivial fields]
Extremal examples are the \emph{null field} $x\mapsto\{x\}$
and the \emph{total field} $x\mapsto X$.
\end{ejp}

\begin{df}
 A field of compact sets $h$ is a \emph{field of neighborhoods} 
 if there is $r>0$ such that $B_r(x)\subset h(x)$ 
 for all $x\in X$.
\end{df}

\begin{ejp}[The field of closed balls]
Given $r>0$ consider the field of closed balls $B_r\colon X\to \hyper X$ 
defined by (\ref{ecuBola}).
It is a field of neighborhoods.
\end{ejp}

The following is an important example in dynamical systems.

\begin{ejp}[Local stable sets]
Given a homeomorphism $f\colon X\to X$ we
define $W^s_r,W^u_r\colon X\to \hyper X$ as 
\[
 W^s_r(x)=\{y\in X: \dist(f^n(x),f^n(y))\leq r\hbox{ for all } n\geq 0\}
\]
and 
\[
 W^u_r(x)=\{y\in X: \dist(f^n(x),f^n(y))\leq r\hbox{ for all } n\leq 0\}.
\]
We have that $W^s_r,W^u_r$ are fields of compact sets. 
Note the dependence with respect to $f$ that is omitted in the notation. 
In Section \ref{secSecFlow} this definition will be extended for flows.
\end{ejp}

We wish to remark that fields of compact sets are only assumed to be semicontinuous 
and in the next definition we require continuity. 
As before, the topology of $\hyper X$ is the one induced by the Hausdorff metric.

\begin{df}
 A continuous function $N\colon [0,1]\times X\to\hyper X$ is a 
 \emph{one-parameter field of neighborhoods}
 if:
\begin{enumerate} 
\item $N_r$ is a field of neighborhoods for all $r>0$ and
\item $N_0(x)=\{x\}$ and $N_1(x)=X$ for all $x\in X$.
\end{enumerate}
\end{df}

\begin{rmk}
  The ball operator may not be continuous and consequently $B_r$ may not define 
  a one-parameter field of neighborhoods. 
  It is not continuous with respect to $r$, for example, if $X$ is a finite set. 
  Moreover, even if $X$ is an arc $B_r$ may not be continuous. See Figure \ref{figDiscBol}.
\end{rmk}
\begin{figure}[h]
  \center{\includegraphics{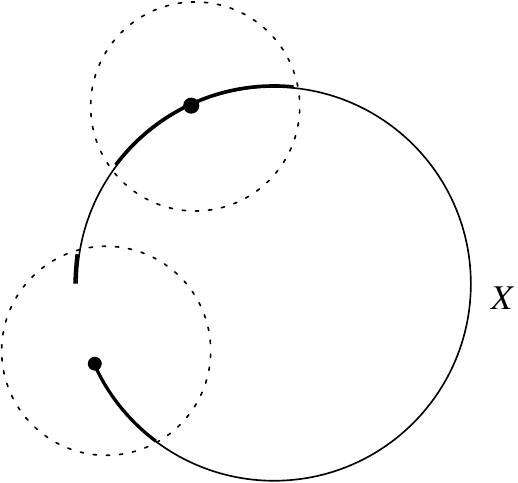}}
  \label{figDiscBol}
  \caption{If $X$ is a circular arc in the plane with the Euclidean metric, 
  $B_r(x)$ is discontinuous with respect to $x$ for some values of $r$. 
  Of course, the arc admits a metric making $B_r(x)$ continuous.}
\end{figure}

Recall that $X$ is a \emph{Peano continuum} if it is a connected, compact and locally connected metrizable space. 
A metric $\dist$ in $X$ is \emph{convex} if given $x,z\in X$, $x\neq z$, and $\alpha\in (0,\dist(x,z))$ there is 
$y\in X$ such that $\dist(x,y)=\alpha$ and $\dist(y,z)=\dist(x,z)-\alpha$. 

\begin{rmk}
\label{rmkPeanoConvex}
It is known that a compact metric space admits a convex metric defining its topology 
if and only if it is a Peano continuum. 
See  \cites{IN,Bing,Moise} for more on this subject.
\end{rmk}

Define 
\[
  \continua X=\{A\in\hyper X: A\hbox{ is connected}\}.
\]

The following result is essentially from \cites{Bing,Moise}.
\begin{thm}
\label{contB}
For a compact metric space $X$ the following statements are equivalent: 
\begin{enumerate}
  \item there is a one-parameter field of neighborhoods
   $N\colon [0,1]\times X\to\continua X$, 
  \item $X$ is a Peano continuum.
\end{enumerate}
\end{thm}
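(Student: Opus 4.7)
The plan is to prove each implication separately, with the nontrivial direction $(2) \Rightarrow (1)$ leaning on the Bing--Moise characterization of Peano continua via convex metrics recalled in Remark \ref{rmkPeanoConvex}.

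For $(1) \Rightarrow (2)$, I would derive the three defining properties of a Peano continuum from a given $N$. Compactness is assumed, and $X = N(1,x) \in \continua X$ gives connectedness. For local connectedness, fix $x \in X$ and $\epsilon > 0$; by continuity of $N$ at $(0,x)$ together with $N(0,x) = \{x\}$, there exists $r_0 \in (0,1]$ with $N(r_0, x) \subset B_\epsilon(x)$; since $N_{r_0}$ is a field of neighborhoods, some $B_{r'}(x)$ lies inside $N(r_0, x)$; and $N(r_0, x)$ is connected by hypothesis. This produces a connected neighborhood of $x$ inside $B_\epsilon(x)$.

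For $(2) \Rightarrow (1)$, I would fix a convex metric $\dist$ on $X$ provided by Remark \ref{rmkPeanoConvex}, let $D = \diam X$ (assuming $|X| > 1$, else the statement is trivial), and define
\[
 N(r,x) = B_{rD}(x).
\]
The endpoint conditions $N(0,x) = \{x\}$ and $N(1,x) = B_D(x) = X$ and the neighborhood condition for $r > 0$ are immediate. Connectedness of $N(r,x)$ follows from the classical Menger--Bing fact that in a complete convex metric space every two points are joined by a metric arc of length equal to their distance: writing $B_{rD}(x)$ as the union of such arcs emanating from $x$ exhibits it as a connected set (actually arcwise connected). Upper semicontinuity of $(r,x) \mapsto B_{rD}(x)$ in the Hausdorff metric is automatic from continuity of $\dist$.

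I expect the main obstacle to be lower semicontinuity of the ball map, precisely because the remark immediately preceding the theorem warns that $B_r$ need not be continuous in a generic metric; so the argument must genuinely use convexity. Given $(r_n, x_n) \to (r, x)$ in $[0,1] \times X$ and $y \in B_{rD}(x)$, I would fix for each $n$ a metric arc from $x_n$ to $y$ of length $\dist(x_n, y)$ and let $y_n$ be the point on it at distance $\min(r_n D, \dist(x_n, y))$ from $x_n$. Then $y_n \in B_{r_n D}(x_n)$ and
\[
 \dist(y, y_n) \leq \max\bigl(\dist(x_n, y) - r_n D,\, 0\bigr) \to \max\bigl(\dist(x, y) - rD,\, 0\bigr) = 0,
\]
so $y_n \to y$, which together with upper semicontinuity yields continuity of $N$ and completes the construction.
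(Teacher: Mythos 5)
Your proposal is correct and takes essentially the same approach as the paper: the implication $(1)\Rightarrow(2)$ is argued identically, and for $(2)\Rightarrow(1)$ the paper likewise invokes the Bing--Moise convex metric (Remark \ref{rmkPeanoConvex}) and takes the associated ball operator as the one-parameter field. The only difference is that you work out explicitly the connectedness of convex-metric balls and the lower-semicontinuity estimate via Menger geodesics (your bound $\dist(y,y_n)\leq\max(\dist(x_n,y)-r_nD,0)$ is in fact uniform in $y$, so it yields Hausdorff continuity directly), details the paper delegates to the reference \cite{IN}.
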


\begin{proof}
($1\to 2$). The local connection of $X$ follows because $N$ is continuous, $N_0(x)=\{x\}$ and 
each $N_r(x)$, with $r>0$, is a connected neighborhood of $x$. 
So, each point has a base of connected neighborhoods. 
It only rests to note that $X=N_1(x)\in\continua X$ to conclude that $X$ is connected.

($2\to 1$). If $X$ is a Peano continuum, then there is a convex metric $\dist$ 
defining the topology of $X$ (Remark \ref{rmkPeanoConvex}). 
Therefore, the ball operator associated to $\dist$ is a one-parameter 
field of neighborhoods (see \cite{IN} for more details).
\end{proof}

Let us give some natural definitions and remarks.

\begin{df}
Given two fields $h_1,h_2\colon X\to \hyper X$ we say that $h_1$ is a \emph{subfield} of $h_2$, denoted as $h_1\subset h_2$, 
if $h_1(x)\subset h_2(x)$ for all $x\in X$.
\end{df}

For example, $W^s_r$ and $W^u_r$ are subfields of the ball field $B_r$.

\begin{df}
 Given two fields $h_1,h_2$ define $h_1\cap h_2$ and $h_1\cup h_2$ as 
 $(h_1\cap h_2)(x)=h_1(x)\cap h_2(x)$ and 
 $(h_1\cup h_2)(x)=h_1(x)\cup h_2(x)$.
\end{df}

\begin{rmk}
\label{rmkSemicontInter}
  If $h_1, h_2\colon X\to \hyper X$ are fields then $h_1\cap h_2$ and 
  $h_1\cup h_2$ are fields.
\end{rmk}

\begin{rmk}
We have that a homeomorphism $f\colon X\to X$ is expansive if there is $r>0$ such that 
$W^s_r\cap W^u_r=O$ the null field. 
\end{rmk}

\subsection{Cross sections of a flow}
\label{secCSOAF}
Let $(X,\dist)$ be a compact metric space.

\begin{df}
\label{dfFlow}
  A \emph{flow} on $X$ is a continuous function $\phi\colon \R\times X\to X$, denoted 
  as $\phi(t,x)=\phi_t(x)$, satisfying
  $\phi_0(x)=x$ for all $x\in X$ and 
  $\phi_{s+t}(x)=\phi_s(\phi_t(x))$ for all $s,t\in \R$ and for all $x\in X$. 
  We say that $\phi$ is a \emph{regular flow} if for all $x\in X$ there is $t\in \R$ such that 
  $\phi_t(x)\neq x$, i.e., it has no equilibrium points.
\end{df}

Let $\phi\colon\R\times X\to X$ be a regular flow. 
Given a field $h\colon X\to \hyper X$ and a compact subset $I\subset \R$, $0\in I$, 
define the field $\phi_I(h)$ by $$(\phi_I(h))(x)=\{\phi_t(y):t\in I, y\in h(x)\}.$$
We say that $C\in \hyper X$ is a \emph{(local) cross section} 
through $x\in C$ if there are $\tau>0$ 
and $\gamma>0$ such that $B_\gamma(x)\subset \phi_{[-\tau,\tau]}(C)$ and 
 $C\cap\phi_{[-\tau,\tau]}(y)=\{y\}$ for all $y\in C$.

\begin{df}
\label{dfFCrossSec}
A field $H\colon X\to \hyper X$ is a \emph{field of cross sections} 
if there is $\tauu >0$ such that $\phi_{[-\tauu ,\tauu ]}(H)$ is a field of neighborhoods and 
$H(x)\cap\phi_{[-\tauu ,\tauu ]}(y)=\{y\}$ if $x\in X$ and $y\in H(x)$. 
\end{df}

\begin{prop}
  If $H$ is a field of cross sections then there is $\tau>0$ such that
  $\phi\colon[-\tau,\tau]\times H(x)\to X$ is injective, for all $x\in X$, and consequently 
  it is a homeomorphism onto its image.
\end{prop}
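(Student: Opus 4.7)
The plan is to leverage the two defining properties of a field of cross sections, together with the standard fact that a regular flow on a compact metric space admits a uniform lower bound on return times.

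First I would recall (or establish) that if $\phi$ is regular, then there exists $T_0>0$ such that $\phi_t(x)\neq x$ for all $x\in X$ and all $t$ with $0<|t|\leq T_0$. Otherwise one could find $x_n\to x$ and $t_n\downarrow 0$ with $\phi_{t_n}(x_n)=x_n$; writing an arbitrary $t>0$ as $k_nt_n+r_n$ with $0\leq r_n<t_n$, one gets $\phi_t(x_n)=\phi_{r_n}(x_n)\to x$, so $\phi_t(x)=x$ for every $t$, contradicting regularity. Let $\hat\tau>0$ be the number from Definition~\ref{dfFCrossSec}, and set
\[
 \tau=\tfrac12\min\{\hat\tau,\,T_0\}.
\]

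Next I would prove injectivity of $\phi\colon[-\tau,\tau]\times H(x)\to X$ for each $x\in X$. Suppose $\phi_s(y_1)=\phi_t(y_2)$ with $s,t\in[-\tau,\tau]$ and $y_1,y_2\in H(x)$. Then $y_2=\phi_{s-t}(y_1)$ with $|s-t|\leq 2\tau\leq\hat\tau$, so
\[
 y_2\in H(x)\cap\phi_{[-\hat\tau,\hat\tau]}(y_1)=\{y_1\},
\]
by the second clause in the definition of a field of cross sections (applied with $y=y_1$, since $y_1\in H(x)$). Hence $y_1=y_2=:y$, and the equation $\phi_s(y)=\phi_t(y)$ gives $\phi_{s-t}(y)=y$ with $|s-t|\leq 2\tau\leq T_0$. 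By the choice of $T_0$ this forces $s=t$, proving injectivity.

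Finally, to promote injectivity to the homeomorphism conclusion, I would note that $[-\tau,\tau]\times H(x)$ is compact (as $H(x)\in\hyper X$ is compact), the flow restricted to it is continuous, and $X$ is Hausdorff; so a continuous injection from a compact space into a Hausdorff space is a homeomorphism onto its image. The main obstacle is really the auxiliary periodicity lemma above: the two defining properties of a field of cross sections by themselves only control intersections of $H(x)$ with short orbit pieces through its own points, so without a uniform lower bound on periods one cannot rule out the case $y_1=y_2$ with $s\neq t$; everything else is routine manipulation of the definitions.
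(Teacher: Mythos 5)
Your proof is correct and takes essentially the same route as the paper's: both pick $\tau$ with $2\tau\leq\hat\tau$ and below the minimal period bound, use the clause $H(x)\cap\phi_{[-\hat\tau,\hat\tau]}(y_1)=\{y_1\}$ to force $y_1=y_2$, rule out $s\neq t$ via the absence of small periods, and get the homeomorphism from a continuous injection on the compact set $[-\tau,\tau]\times H(x)$. The only difference is that you spell out the standard compactness argument for the uniform lower bound $T_0$ on periods of a regular flow, a fact the paper invokes without proof.
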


\begin{proof}
  Take $\tauu >0$ from Definition \ref{dfFCrossSec}. Since $\phi$ is a regular flow there is 
  $\tau\in (0,\tauu /2)$ such that $\phi_t(z)\neq z$ if $z\in X$ and $0<|t|\leq 2\tau$.
  Suppose that $\phi_t(y)=\phi_s(z)$ with $|s|,|t|\leq \tau$ and $y,z\in H(x)$. 
  Then $\phi_{t-s}(y)=z$ and $|t-s|\leq 2\tau<\tauu $.
  Since $H(x)\cap\phi_{[-\tauu ,\tauu ]}(y)=\{y\}$ we conclude that $y=z$. 
  Then $\phi_{t-s}(y)=y$ with $|t-s|\leq 2\tau$. This implies that $t=s$. 
  This proves the injectivity of the considered restriction of $\phi$. 
  The continuity of the inverse follows by the continuity of $\phi$ and the compactness of $[-\tau,\tau]\times H(x)$.
\end{proof}

\begin{df}
If $H$ is a field of cross sections, $\tau>0$ and 
$\phi\colon[-\tau,\tau]\times H(x)\to X$ is injective, for all $x\in X$ 
then we say that $H$ is a field of cross sections of \emph{time} $\tau$.
\end{df}

From the definitions we are assuming that our fields $H$ are semicontinuous. 
Then, it can be the case that $x_n\to x$, $r>0$, $y\in H(x)$ 
and $B_r(y)\cap H(x_n)=\emptyset$ for all $n\geq 1$. 
In this case we could say that $y$ is \emph{discontinuity point} 
(a very confusing terminology that will not 
be used in the sequel).
The following lemma means that if $H$ is a field of cross sections 
then the discontinuity points in $H(x)$ are uniformly far from $x$.

\begin{lem}
\label{cuasicont}
  If $H$ is a field of cross sections of time $\tau$ and 
  $B_\gamma\subset \phi_{[-\tau,\tau]}(H)$ 
  then for all $\epsilon\in(0,\tau]$ and for all $\gamma'\in (0,\gamma)$ 
  there is $\delta>0$ such that 
  if $\dist(x,y)\leq\delta$, 
  then $H(y)\cap B_{\gamma'}(y)\subset\phi_{[-\epsilon,\epsilon]}(H(x))$.
\end{lem}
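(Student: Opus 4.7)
The plan is to argue by contradiction, exploiting compactness of $X$ and of $\hyper X$ together with the semicontinuity of $H$ and the defining injectivity of a field of cross sections of time $\tau$.

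Suppose the conclusion fails. Then there exist $\epsilon\in(0,\tau]$, $\gamma'\in(0,\gamma)$ and sequences $x_n,y_n\in X$ with $\dist(x_n,y_n)\to 0$ and points $z_n\in H(y_n)\cap B_{\gamma'}(y_n)$ such that $z_n\notin\phi_{[-\epsilon,\epsilon]}(H(x_n))$ for every $n$. Passing to a subsequence, I would assume $x_n\to x$, $y_n\to x$, $z_n\to z$, and $H(y_n)\to C$ in the Hausdorff metric. Semicontinuity of $H$ at $x$ gives $C\subset H(x)$, and since $z_n\in H(y_n)$ with $z_n\to z$ this forces $z\in H(x)$. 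Note also $\dist(z,x)\le\gamma'<\gamma$.

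Next I would locate each $z_n$ inside $\phi_{[-\tau,\tau]}(H(x_n))$. For $n$ large, $\dist(z_n,x_n)\le\gamma'+\dist(x_n,y_n)<\gamma$, so $z_n\in B_\gamma(x_n)\subset\phi_{[-\tau,\tau]}(H(x_n))$. Write $z_n=\phi_{s_n}(w_n)$ with $s_n\in[-\tau,\tau]$ and $w_n\in H(x_n)$. The failure hypothesis $z_n\notin\phi_{[-\epsilon,\epsilon]}(H(x_n))$ forces $|s_n|>\epsilon$. Passing to a further subsequence gives $s_n\to s$ with $|s|\ge\epsilon>0$ and $w_n\to w$; semicontinuity applied again yields $w\in H(x)$, and continuity of $\phi$ gives $z=\phi_s(w)$.

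Now the two points $(0,z)$ and $(s,w)$ both lie in $[-\tau,\tau]\times H(x)$ and are identified by $\phi$. Since $H$ is a field of cross sections of time $\tau$, the map $\phi\colon[-\tau,\tau]\times H(x)\to X$ is injective, so $s=0$, contradicting $|s|\ge\epsilon>0$. This contradiction proves the lemma.

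The only delicate step is the double use of semicontinuity (once to place $z$ in $H(x)$, once to place $w$ in $H(x)$) combined with the choice of $(s_n,w_n)$; the rest is routine compactness. The key structural input is the injectivity of $\phi$ on $[-\tau,\tau]\times H(x)$, which replaces the more restrictive cross-section condition of Definition \ref{dfFCrossSec} and is precisely what a field of cross sections of time $\tau$ supplies.
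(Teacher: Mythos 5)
Your proof is correct and takes essentially the same route as the paper's: the identical contradiction setup, the inclusion $B_{\gamma'}(y_n)\subset B_\gamma(x_n)\subset\phi_{[-\tau,\tau]}(H(x_n))$ forcing a flow time of modulus greater than $\epsilon$, compactness to extract convergent subsequences, and two applications of semicontinuity before contradicting the cross-section property. Your explicit appeal to the injectivity of $\phi\colon[-\tau,\tau]\times H(x)\to X$ is merely a more precise phrasing of the paper's closing appeal to ``the definition of cross section,'' since that injectivity is exactly what \emph{time} $\tau$ means.
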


\begin{proof}
  Arguing by contradiction, assume that there are $\epsilon>0$, $\gamma'\in (0,\gamma)$, 
  $x_n,y_n\to p$ and $z_n\in H(y_n)\cap B_{\gamma'}(y_n)$ such that 
  \begin{equation}
    \label{ecuznH}
    z_n\notin \phi_{[-\epsilon,\epsilon]}(H(x_n)). 
  \end{equation}
  Since $\dist(x_n,y_n)\to 0$ we can suppose that $\dist(x_n,y_n)<\gamma-\gamma'$. 
  In this way, $B_{\gamma'}(y_n)\subset B_\gamma(x_n)$.
  Then $z_n\in B_\gamma(x_n)$. 
  Since $B_\gamma\subset \phi_{[-\tau,\tau]}(H)$, 
  we have that $z_n\in \phi_{[-\tau,\tau]}(H(x_n))$ and consequently 
  there is $t_n\in{[-\tau_n,\tau_n]}$ such that $\phi_{t_n}(z_n)\in H(x_n)$. 
  By (\ref{ecuznH}) we have that $|t_n|>\epsilon$. 
  Now we take limit as $n\to\infty$. 
  Assume that $t_n\to \tu $ with $|\tu |\in [\epsilon,\tau]$, $z_n\to z$ an recall that $x_n,y_n\to p$. 
  Since $H$ is semicontinuous and $z_n\in H(y_n)$ we have that $z\in H(p)$. 
  Also, $\phi_{\tu }(z)\in H(p)$. Since $|t|\leq\tau$ and $t\neq 0$ we have a contradiction 
  with the definition of cross section and the lemma is proved.
\end{proof}

\begin{prop}
\label{cuasicont2}
If $H$ is a field of cross sections then 
$\phi_{[-\epsilon,\epsilon]}(H)$ is 
a field of neighborhoods for all $\epsilon>0$.
\end{prop}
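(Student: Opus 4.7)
The statement factors through a clean application of Lemma \ref{cuasicont}. By the proposition immediately preceding this statement, $H$ is a field of cross sections of some time $\tau>0$, and the definition supplies $\gamma>0$ with $B_\gamma\subset\phi_{[-\tau,\tau]}(H)$. Since $\phi_{[-\epsilon,\epsilon]}(H)\supset\phi_{[-\tau,\tau]}(H)$ whenever $\epsilon\geq\tau$, I may restrict attention to the interesting range $\epsilon\in(0,\tau]$.

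For such $\epsilon$, I would pick any $\gamma'\in(0,\gamma)$ and invoke Lemma \ref{cuasicont} to produce $\delta>0$ such that $\dist(x,y)\leq\delta$ implies $H(y)\cap B_{\gamma'}(y)\subset\phi_{[-\epsilon,\epsilon]}(H(x))$ for all $x,y\in X$. The key observation is that the lemma can be applied in the degenerate case where $y$ is taken to be a test point in $B_\delta(x)$ itself. Concretely, given $x\in X$ and $z\in B_\delta(x)$, take $y:=z$: the field axiom $z\in H(z)$ together with the trivial $z\in B_{\gamma'}(z)$ place $z\in H(z)\cap B_{\gamma'}(z)\subset\phi_{[-\epsilon,\epsilon]}(H(x))$. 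Hence $B_\delta(x)\subset\phi_{[-\epsilon,\epsilon]}(H(x))$ uniformly in $x$.

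It then remains to check that $\phi_{[-\epsilon,\epsilon]}(H)$ is itself a field of compact sets. Containment $x\in(\phi_{[-\epsilon,\epsilon]}(H))(x)$ is immediate via $t=0$, $y=x$. Semicontinuity is the general fact that applying $\phi_I$, with $I$ compact and $0\in I$, to a field produces a field: if $x_n\to x$ and $\phi_{[-\epsilon,\epsilon]}(H(x_n))\to C$, any $z\in C$ can be written as $\lim\phi_{t_n}(y_n)$ with $t_n\in[-\epsilon,\epsilon]$ and $y_n\in H(x_n)$, and passing to subsequences where $t_n\to t$ and $y_n\to y$ gives $y\in H(x)$ by semicontinuity of $H$, hence $z=\phi_t(y)\in\phi_{[-\epsilon,\epsilon]}(H(x))$. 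Combined with the uniform ball inclusion above, this yields a field of neighborhoods. The main obstacle is really only psychological: one has to notice that Lemma \ref{cuasicont} applies in the trivial instance $y=z$ near $x$, where $z$ itself witnesses the non-empty intersection; once this is seen, all the substantive work has already been done inside that lemma.
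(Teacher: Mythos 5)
Your proposal is correct and follows essentially the same route as the paper: both apply Lemma \ref{cuasicont} in the degenerate instance where the test point $z\in B_\delta(x)$ itself witnesses $z\in H(z)\cap B_{\gamma'}(z)\subset\phi_{[-\epsilon,\epsilon]}(H(x))$, yielding the uniform inclusion $B_\delta\subset\phi_{[-\epsilon,\epsilon]}(H)$. Your extra checks (the case $\epsilon>\tau$ and the semicontinuity of $\phi_{[-\epsilon,\epsilon]}(H)$, which the paper leaves implicit in its definition of $\phi_I(h)$) are sound but do not change the argument.
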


\begin{proof}
Given $\epsilon>0$ take any $\gamma'\in (0,\gamma)$ and consider 
$\delta>0$ from Lemma \ref{cuasicont}. 
Then, if $\dist(x,y)\leq\delta$ we have, by the lemma, that $y\in \phi_{[-\epsilon,\epsilon]}(H(x))$. 
Therefore, $B_\delta\subset \phi_{[-\epsilon,\epsilon]}(H)$ and 
$\phi_{[-\epsilon,\epsilon]}(H)$ is a field of neighborhoods for all $\epsilon>0$. 
\end{proof}

\begin{prop}
\label{cajaDos}
If $H$ is a field of cross sections 
then $N_\epsilon(x)=\cup_{|t|\leq\epsilon}H(\phi_t(x))$ is a 
field of neighborhoods for all $\epsilon>0$.
\end{prop}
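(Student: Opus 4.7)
The plan is to establish two things separately: that $N_\epsilon$ is a field of compact sets (i.e.\ $x\in N_\epsilon(x)\in\hyper X$ and $N_\epsilon$ is upper semicontinuous in $x$), and that there is a uniform $r>0$ with $B_r(x)\subset N_\epsilon(x)$ for every $x\in X$. The first part is routine. The set $N_\epsilon(x)=\cup_{|t|\leq\epsilon}H(\phi_t(x))$ is compact: if $y_n\in H(\phi_{t_n}(x))$ with $|t_n|\leq\epsilon$ and $y_n\to y^{*}$, extract a subsequence with $t_n\to t^{*}$; by continuity of $\phi$ we have $\phi_{t_n}(x)\to\phi_{t^{*}}(x)$, and the upper semicontinuity of $H$ yields $y^{*}\in H(\phi_{t^{*}}(x))\subset N_\epsilon(x)$. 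Containment $x\in N_\epsilon(x)$ is immediate from $t=0$. Semicontinuity of the field $x\mapsto N_\epsilon(x)$ follows from the same diagonal extraction applied to sequences $x_n\to x$ and $y_n\in N_\epsilon(x_n)$.

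For the uniform neighborhood property I would argue by contradiction. Suppose no such $r>0$ exists; then there are sequences $x_n\to p$ and $z_n\to p$ with $\dist(x_n,z_n)\to 0$ and $z_n\notin H(\phi_t(x_n))$ for every $|t|\leq\epsilon$. Fix an auxiliary $\epsilon_0\in(0,\min\{\epsilon,\tauu\})$. By Proposition \ref{cuasicont2} applied with parameter $\epsilon_0$, the field $\phi_{[-\epsilon_0,\epsilon_0]}(H)$ is a field of neighborhoods, so for $n$ large we may write $z_n=\phi_{s_n}(y_n)$ with $y_n\in H(x_n)$ and $|s_n|\leq\epsilon_0$. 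Extracting subsequences with $s_n\to s^{*}$ and $y_n\to y^{*}$, upper semicontinuity of $H$ gives $y^{*}\in H(p)$, while $\phi_{s^{*}}(y^{*})=\lim z_n=p$ combined with the cross-section property $H(p)\cap\phi_{[-\tauu,\tauu]}(p)=\{p\}$ forces $s^{*}=0$ and $y^{*}=p$. Consequently $s_n\to 0$, $y_n\to p$ and $\phi_{s_n}(x_n)\to p$.

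The final step is to locate $t_n\in[-\epsilon,\epsilon]$ with $z_n\in H(\phi_{t_n}(x_n))$, contradicting the hypothesis. Intuitively $t_n$ should be close to $s_n$ (in a flow box with flow-invariant cross sections one would have exactly $t_n=s_n$), and the correction needed because $H$ is not flow-invariant should be small and is controlled by Lemma \ref{cuasicont} applied with the two nearby base points $x_n$ and $\phi_{s_n}(x_n)$. The cross-section uniqueness property — that an orbit meets $H(q)$ at only one point within flow-time $\tauu$ — should then permit pinning down an exact $t_n$ such that $z_n\in H(\phi_{t_n}(x_n))$ itself. This last step is the \emph{main obstacle}: because $H$ is only upper semicontinuous (not continuous) one cannot directly invoke an implicit-function or IVT-style argument on the flow-box projection $\sigma_{\phi_t(x_n)}(z_n)$, and one must instead use both directions of Lemma \ref{cuasicont} together with the compactness of $[-\epsilon,\epsilon]$ and the injectivity of $\phi\colon[-\tau,\tau]\times H(\phi_{s_n}(x_n))\to X$ provided by the proposition following Definition \ref{dfFCrossSec} in order to produce the desired $t_n$.
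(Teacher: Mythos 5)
Your first two steps are sound: the diagonal extraction does give compactness and semicontinuity of $N_\epsilon$, and Proposition \ref{cuasicont2} together with the uniqueness property of cross sections correctly yields $z_n=\phi_{s_n}(y_n)$ with $y_n\in H(x_n)$, $s_n\to 0$, $y_n\to p$. But the proof stops exactly where the content of the proposition lies. Exhibiting $t_n\in[-\epsilon,\epsilon]$ with $z_n\in H(\phi_{t_n}(x_n))$ \emph{is} the assertion $z_n\in N_\epsilon(x_n)$ that you need in order to reach the contradiction, and your final paragraph only names ingredients (Lemma \ref{cuasicont} ``in both directions'', compactness of $[-\epsilon,\epsilon]$, injectivity of the flow on the section) and says they ``should permit pinning down'' $t_n$. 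No argument is actually given, and it is not clear how those ingredients alone could produce an \emph{exact} hitting time: Lemma \ref{cuasicont} only places points inside flow boxes $\phi_{[-\epsilon',\epsilon']}(H(\cdot))$, i.e.\ it bounds hitting times, but it can never force one to vanish. As written the proof is incomplete at its decisive step.

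Moreover, the obstacle you invoke to excuse the gap is illusory: an IVT argument \emph{is} available despite $H$ being only semicontinuous. After first reducing to $\epsilon$ small (harmless, since $N_\epsilon\subset N_{\epsilon'}$ for $\epsilon\leq\epsilon'$) so that $z_n\in\phi_{[-\tau,\tau]}(H(\phi_t(x_n)))$ for all $|t|\leq\epsilon$ and large $n$, the hitting time $\sigma_n(t)$ defined by $\phi_{\sigma_n(t)}(z_n)\in H(\phi_t(x_n))$ is \emph{unique} in the allowed window by the injectivity proposition following Definition \ref{dfFCrossSec}; uniqueness plus semicontinuity of $H$ upgrades to continuity of $t\mapsto\sigma_n(t)$, because any limit of $\sigma_n(t_k)$ along $t_k\to t$ is again a hitting time at $t$ and hence equals $\sigma_n(t)$. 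The same limit-plus-uniqueness argument at the endpoints gives $\sigma_n(-\epsilon)\to-\epsilon$ and $\sigma_n(\epsilon)\to\epsilon$, so for large $n$ the intermediate value theorem produces $t_n$ with $\sigma_n(t_n)=0$, i.e.\ $z_n\in N_\epsilon(x_n)$, the desired contradiction. The paper takes a different route that avoids hitting a section exactly: it projects the nearby point onto the two sections $H(\phi_{-\epsilon}(x_n))$ and $H(x_n)$ with times $s_n,t_n$ normalized to $0<s_n\leq t_n$, uses Lemma \ref{cuasicont} to force $t_n\to 0$, hence $s_n\to 0$, and then semicontinuity places the limit $p$ in $H(\phi_{-\epsilon}(p))$, violating the cross-section property at $\phi_{-\epsilon}(p)$. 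Either mechanism would close your gap; your proposal as it stands contains neither.
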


\begin{proof}
By contradiction assume that no $B_\rho$ is contained in $N_\epsilon$ 
for some fixed $\epsilon>0$. 
Then we can take $x_n,y_n$ such that $\dist(x_n,y_n)\to 0$ 
and $y_n\notin N_\epsilon(x_n)$ for all $n\geq 1$. 
Take $\delta>0$ (from Proposition \ref{cuasicont2}) such that $B_\delta\subset \phi_{[-\epsilon,\epsilon]}(H)$. 
Also assume that $y_n\in B_\delta(x_n)$ for all $n\geq 1$. 
Suppose that $H$ is a field of cross sections of time $\tau$.
If $\epsilon$ and $\delta$ are small we can assume that 
$B_\delta(x_n)\subset \phi_{[-\tau,\tau]}(H(\phi_t(x_n)))$ if $|t|\leq\epsilon$.
Then, for each $n$, there are $t_n,s_n$ such that $z_n=\phi_{s_n}(y_n)\in H(\phi_{-\epsilon}(x_n))$ 
and $\phi_{s_n}(y_n)\in H(x_n)$. 
We can assume that $0<s_n\leq t_n$. 
By Lemma \ref{cuasicont} we have that $t_n\to 0$. 
Then $s_n\to 0$. 
Taking limit $n\to\infty$ we obtain: $\lim z_n=\phi_\epsilon(\lim x_n)$ 
and $\lim z_n\in H(\phi_{-\epsilon}(\lim x_n))$ which is a contradiction.
\end{proof}

\begin{prop}
\label{propSubCross}
  If $N$ is a field of neighborhoods and $H$ is a field of cross sections 
  then $N\cap H$ is a field of cross sections.
\end{prop}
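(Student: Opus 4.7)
My plan is to verify the two parts of Definition \ref{dfFCrossSec} for the field $N\cap H$, after first noting that $N\cap H$ is indeed a field by Remark \ref{rmkSemicontInter} (and $x\in N(x)\cap H(x)$ trivially). The only real content is to produce a suitable time $\tau'>0$: I will show it works by taking $\tau'$ small. The uniqueness condition in the definition will be inherited immediately from $H$, so the bulk of the work is producing the ``neighborhood'' condition.

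Concretely, fix $r>0$ with $B_r\subset N$, coming from the hypothesis that $N$ is a field of neighborhoods. Let $\hat\tau>0$ be the time associated with the cross section field $H$. By Proposition \ref{cuasicont2}, for every $\tau'\in(0,\hat\tau]$ we have that $\phi_{[-\tau',\tau']}(H)$ is a field of neighborhoods, so there is $\gamma'>0$ with $B_{\gamma'}\subset \phi_{[-\tau',\tau']}(H)$. The plan is to choose $\tau'$ and a further $\delta\in(0,\gamma']$ small enough that whenever $z\in B_\delta(x)$ is written as $z=\phi_t(y)$ with $y\in H(x)$ and $|t|\leq\tau'$, the point $y$ automatically lies in $B_r(x)\subset N(x)$; this will give $z\in \phi_{[-\tau',\tau']}((N\cap H)(x))$ and hence $B_\delta\subset \phi_{[-\tau',\tau']}(N\cap H)$.

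For the choice of $\tau'$ and $\delta$, I will use the uniform continuity of $\phi$ on $[-\hat\tau,\hat\tau]\times X$: there is $\tau'\in(0,\hat\tau]$ such that $\dist(\phi_s(w),w)<r/2$ for all $w\in X$ and $|s|\leq\tau'$, and there is $\delta\in(0,\min\{\gamma',r/2\}]$ such that $\dist(\phi_{-t}(z),\phi_{-t}(x))<r/2$ whenever $\dist(z,x)<\delta$ and $|t|\leq\tau'$. Then for $z\in B_\delta(x)$ and the representation $z=\phi_t(y)$ from $B_{\gamma'}\subset\phi_{[-\tau',\tau']}(H)$, one has
\[
  \dist(y,x)\leq \dist(\phi_{-t}(z),\phi_{-t}(x))+\dist(\phi_{-t}(x),x)<r/2+r/2=r,
\]
so $y\in B_r(x)\subset N(x)$, which gives exactly what is needed.

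The uniqueness part of Definition \ref{dfFCrossSec} is immediate: since $(N\cap H)(x)\subset H(x)$, for any $y\in(N\cap H)(x)$ we have
\[
  (N\cap H)(x)\cap \phi_{[-\tau',\tau']}(y)\subset H(x)\cap \phi_{[-\hat\tau,\hat\tau]}(y)=\{y\},
\]
and $y$ itself belongs to the left-hand side. The only mild obstacle I anticipate is bookkeeping the three small constants $\tau'$, $\delta$, $\gamma'$ so that they are mutually compatible; once that is arranged, everything follows by the uniform continuity of $\phi$ and the hypotheses on $N$ and $H$.
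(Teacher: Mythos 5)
Your proof is correct and follows essentially the same route as the paper's: both project a point of $B_\delta(x)$ onto $H(x)$ in a short time using Proposition \ref{cuasicont2}, then use uniform continuity of the flow and a triangle inequality to place the projected point inside the ball $B_r(x)\subset N(x)$, with the uniqueness condition inherited trivially from $H$. The only differences are cosmetic (you split the triangle inequality through $\phi_{-t}$ while the paper bounds the orbit-segment diameter directly, and you make the uniqueness check explicit where the paper leaves it implicit), plus a harmless mismatch between the strict inequality $\dist(z,x)<\delta$ in your uniform-continuity choice and the closed ball $B_\delta(x)$, which is fixed by shrinking $\delta$.
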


\begin{proof}
  The semicontinuity of $N\cap H$ follows by Remark \ref{rmkSemicontInter}. 
  Let $\tau>0$ be a time for $H$.
  Take $\gamma>0$ such that $B_\gamma\subset N\cap \phi_{[-\tau,\tau]}(H)$.
  Consider $\epsilon>0$ such that $\diam(\phi_{[0,\epsilon]}(z))<\gamma/2$ for all $z\in X$.
  By Proposition \ref{cuasicont2} there is $\delta\in(0,\gamma/2)$ such that $B_\delta(x)\subset \phi_{[-\epsilon,\epsilon]}(H(x))$.
  
  Let us show that $B_\delta\subset \phi_{[-\tau,\tau]}(N\cap H)$.
  For all $y\in B_\delta(x)$ there is $t\in [-\tau,\tau]$ such that 
  $z=\phi_t(y)\in H(x)$. 
  Since $\dist(y,z)<\gamma/2$ and $\dist(x,y)\leq\delta<\gamma/2$ we have that $z\in B_\gamma(x)\subset N(x)$. 
  Then $z\in N(x)\cap H(x)$. 
  Since $y =\phi_{-t}(z)$ and $|t|\leq\tau$ we have that $y\in \phi_{[-\tau,\tau]}(N(x)\cap H(x))$. 
  Then $B_\delta(x)\subset \phi_{[-\tau,\tau]}(N(x)\cap H(x))$ and the proof ends.
\end{proof}

For future reference we state the following result.

\begin{prop}
\label{H123}
  If $H_1\subset H_2\subset H_3$, $H_1,H_3$ are fields of cross sections 
  and $H_2$ is semicontinuous then 
  $H_2$ is a field of cross sections.
\end{prop}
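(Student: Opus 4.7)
The plan is to verify directly the two quantitative conditions in Definition \ref{dfFCrossSec} for $H_2$, extracting the neighborhood property from the smaller field $H_1$ and the injectivity/cross-section property from the larger field $H_3$. Semicontinuity of $H_2$ is given by hypothesis, so those are the only two things left to check, and it should suffice to take a single $\hat\tau$ equal to the minimum of the times supplied by $H_1$ and $H_3$.

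First I would use $H_1$ to produce the field of neighborhoods. Since $H_1$ is a field of cross sections, there are $\hat\tau_1>0$ and $r>0$ such that $B_r(x)\subset \phi_{[-\hat\tau_1,\hat\tau_1]}(H_1(x))$ for every $x\in X$. Because $H_1\subset H_2$, we immediately get $\phi_{[-\hat\tau_1,\hat\tau_1]}(H_1(x))\subset \phi_{[-\hat\tau_1,\hat\tau_1]}(H_2(x))$ and therefore $B_r(x)\subset \phi_{[-\hat\tau_1,\hat\tau_1]}(H_2(x))$. So $\phi_{[-\hat\tau_1,\hat\tau_1]}(H_2)$ is a field of neighborhoods.

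Next I would use $H_3$ to get the local injectivity clause. Since $H_3$ is a field of cross sections, pick $\hat\tau_3>0$ with $H_3(x)\cap\phi_{[-\hat\tau_3,\hat\tau_3]}(y)=\{y\}$ whenever $y\in H_3(x)$. Given $x\in X$ and $y\in H_2(x)$, the inclusion $H_2\subset H_3$ yields $y\in H_3(x)$ and $H_2(x)\subset H_3(x)$, whence
\[
H_2(x)\cap \phi_{[-\hat\tau_3,\hat\tau_3]}(y)\;\subset\; H_3(x)\cap\phi_{[-\hat\tau_3,\hat\tau_3]}(y)\;=\;\{y\},
\]
while the reverse inclusion is trivial because $y=\phi_0(y)\in H_2(x)\cap\phi_{[-\hat\tau_3,\hat\tau_3]}(y)$.

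Setting $\hat\tau=\min\{\hat\tau_1,\hat\tau_3\}$ then simultaneously satisfies both conditions in Definition \ref{dfFCrossSec} for $H_2$. There is no genuine obstacle here: the entire statement is essentially a sandwich/monotonicity observation, since the neighborhood condition is preserved under enlarging the field and the cross-section condition is preserved under shrinking it.
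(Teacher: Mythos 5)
Your decomposition---getting the neighborhood clause from $H_1\subset H_2$ and the injectivity clause from $H_2\subset H_3$---is exactly the intended argument; the paper disposes of this proposition with ``it is direct from the definitions,'' and (modulo noting $x\in H_1(x)\subset H_2(x)$, which is immediate) that intended argument is yours. However, your final step, taking $\hat\tau=\min\{\hat\tau_1,\hat\tau_3\}$, has a gap in one of the two cases. The injectivity clause is monotone in the helpful direction: if it holds at time $\hat\tau_3$ it holds at every smaller time, since $\phi_{[-\hat\tau,\hat\tau]}(y)$ shrinks as $\hat\tau$ decreases. The neighborhood clause is monotone the \emph{other} way: $\phi_{[-\hat\tau,\hat\tau]}(H_2(x))$ shrinks as $\hat\tau$ decreases, so your containment $B_r(x)\subset\phi_{[-\hat\tau_1,\hat\tau_1]}(H_2(x))$ says nothing at time $\hat\tau_3$ when $\hat\tau_3<\hat\tau_1$. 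In that case the minimum, as you have argued it, verifies only the injectivity clause, while Definition \ref{dfFCrossSec} requires both clauses for the \emph{same} $\hat\tau$; your closing remark that ``there is no genuine obstacle'' glosses over precisely this asymmetry.

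The repair is already available in the paper and costs one citation. Proposition \ref{cuasicont2} asserts that if $H_1$ is a field of cross sections then $\phi_{[-\epsilon,\epsilon]}(H_1)$ is a field of neighborhoods for \emph{every} $\epsilon>0$, not merely for the time $\hat\tau_1$ supplied by the definition. Applying it with $\epsilon=\hat\tau_3$ and then using $H_1\subset H_2$ gives $B_{r'}\subset\phi_{[-\hat\tau_3,\hat\tau_3]}(H_1)\subset\phi_{[-\hat\tau_3,\hat\tau_3]}(H_2)$ for some $r'>0$, so $\hat\tau=\hat\tau_3$ works unconditionally (when $\hat\tau_1\leq\hat\tau_3$ your original choice $\hat\tau=\hat\tau_1$ already works, since then $H_2(x)\cap\phi_{[-\hat\tau_1,\hat\tau_1]}(y)\subset H_3(x)\cap\phi_{[-\hat\tau_3,\hat\tau_3]}(y)=\{y\}$). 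With that single invocation of Proposition \ref{cuasicont2}, your proof is complete and coincides with the paper's.
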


\begin{proof}
  It is direct from the definitions.
\end{proof}

\subsection{Constructing cross sections}
\label{secCCS}

Given a field of neighborhoods $N\colon X\to\hyper X$ define 
$$U(N)=\{(x,y)\in X\times X:y\in N(x)\}.$$ 

\begin{df}[Topological forms]
  A \emph{1-form} is a continuous function 
  $$\omega\colon U(N)\to \R$$ such that $\omega_x(x)=0$.
  Define the field of compact sets 
  $\ker(\omega)\colon X\to\hyper X$ by 
  $$\ker(\omega)_x=\{y\in N(x):\omega_x(y)=0\}.$$
\end{df}

Given a flow $\phi$ on $X$ and a 1-form 
$\omega$ define 
\[
  \dot\omega_x(y)=\lim_{t\to 0}\frac{\omega_x(\phi_t(y))-\omega_x(y)}t
\]
if this limit exists. 

\begin{rmk}
  It could be useful to think of $U(N)$ as Milnor's tangent microbundle 
  as defined in \cite{Milnor}.
\end{rmk}

The following results are based in \cite{W}*{Section 29}. 
They are stated here in such a way that they can be used 
and extended later.

\begin{prop}
\label{lemFunD}
If $\omega$ is a 1-form with $\dot\omega$ continuous and non-vanishing then 
there is $\radio_1>0$ 
such that 
$H_\rho=B_\rho\cap \ker(\omega)$ 
is a field of cross sections for all $\rho\in (0,\rho_1)$.
\end{prop}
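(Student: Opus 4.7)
The plan is to show $H_\rho$ satisfies both conditions of Definition~\ref{dfFCrossSec} by exploiting the non-vanishing of $\dot\omega$. The underlying geometric picture is that $\dot\omega$ measures transversality of the flow to the level sets of $\omega_x$, so a uniformity argument over the compact diagonal will produce a positive lower bound for $|\dot\omega|$ near the diagonal, and this lower bound yields strict monotonicity of $s\mapsto\omega_x(\phi_s(z))$ --- which in turn gives both the injectivity along orbits and the neighborhood condition.

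First I would check that $H_\rho$ is a field. That $x\in H_\rho(x)$ is immediate from $\omega_x(x)=0$. For semicontinuity: $B_\rho$ is semicontinuous by continuity of $\dist$; $\ker(\omega)$ is semicontinuous because $U(N)$ is closed (by semicontinuity of $N$) and $\omega$ is continuous, so any limit $y_n\to y$ along $x_n\to x$ with $y_n\in\ker(\omega)_{x_n}$ satisfies $y\in\ker(\omega)_x$. The intersection is then semicontinuous by Remark~\ref{rmkSemicontInter}.

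Next I would extract uniform constants. Because the diagonal $\Delta=\{(x,x):x\in X\}$ is compact, lies in $U(N)$, and $\dot\omega$ is continuous and nowhere zero, there exist $r_0>0$ and $c>0$ with $|\dot\omega_x(y)|\geq c$ whenever $\dist(x,y)\leq r_0$; after shrinking $r_0$ if needed, $\dot\omega_x(y)$ has locally constant sign on this neighborhood (handled component by component). Using $\omega_x(x)=0$ and uniform continuity of $\omega$ near the diagonal, for any prescribed $\eta>0$ I can find $\rho_1\in(0,r_0/2)$ with $|\omega_x(y)|<\eta$ whenever $\dist(x,y)<\rho_1$. Uniform continuity of the flow then produces $\tau_0>0$ such that $\phi_s(z)\in B_{r_0/2}(x)\subset N(x)$ whenever $z\in B_{\rho_1}(x)$ and $|s|\leq\tau_0$. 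Since $s\mapsto\omega_x(\phi_s(z))$ is differentiable at every such $s$ with derivative $\dot\omega_x(\phi_s(z))$ of constant sign and absolute value at least $c$, it is strictly monotone on $[-\tau_0,\tau_0]$ with slope bounded below by $c$.

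Setting $\tauu=\tau_0$ and fixing $\rho\in(0,\rho_1)$, both conditions follow. If $y\in H_\rho(x)$ and $\phi_t(y)\in H_\rho(x)$ with $|t|\leq\tauu$, then $\omega_x(y)=0=\omega_x(\phi_t(y))$ and strict monotonicity forces $t=0$. For the neighborhood property, choosing $\eta<c\tauu/2$ in the previous step, the intermediate value theorem provides, for every $z$ in a sufficiently small ball $B_\gamma(x)$, a $t\in[-\tauu,\tauu]$ with $\omega_x(\phi_t(z))=0$; shrinking $\gamma$ further places $\phi_t(z)$ in $B_\rho(x)$, so $z\in\phi_{[-\tauu,\tauu]}(H_\rho(x))$ and $\phi_{[-\tauu,\tauu]}(H_\rho)$ is a field of neighborhoods. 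The main obstacle is uniformity: since $\omega$ is defined only on $U(N)$, each evaluation $\omega_x(\phi_s(z))$ must be accompanied by a verification that $\phi_s(z)\in N(x)$, and the constants $\rho_1,\eta,\tau_0,\gamma$ must be selected in the right order and uniformly in $x$; this sequencing, enabled by the compactness of $X$, is where the real work of the proof lies.
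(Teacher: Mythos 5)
Your proof is correct and follows essentially the same route as the paper's: a uniform lower bound $|\dot\omega_x(y)|\geq c$ near the diagonal obtained from compactness and continuity, an intermediate-value/strict-monotonicity argument for $s\mapsto\omega_x(\phi_s(z))$ that yields both the neighborhood property $B_\gamma\subset\phi_{[-\tau,\tau]}(H_\rho)$ and the section property $H_\rho(x)\cap\phi_{[-\tau,\tau]}(y)=\{y\}$, plus a direct check of semicontinuity (your reduction via $\ker(\omega)$ and Remark~\ref{rmkSemicontInter} matches the paper's sequence argument). If anything you are slightly more careful than the paper, which after finding $s$ with $\omega_x(\phi_s(y))=0$ asserts $\phi_s(y)\in H_\rho(x)$ without noting, as you do, that $\gamma$ must be shrunk so that $\phi_s(y)$ also lands in $B_\rho(x)$.
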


\begin{proof}
Since $\omega_x$ is continuous we have that $H_\rho(x)$ is a compact set and 
since $\omega_x(x)=0$ we have that $x\in H_\rho(x)$.
Let $a,\rho_1>0$ be such that $B_{\rho_1}\subset N$ and 
$|\dot\omega_x(y)|\geq a$ if $y\in B_{\rho_1}(x)$.
Given $\radio\in (0,\rho_1)$ there is $\tau>0$ such that if $\dist(x,y)<\radio$ then 
$\phi_t(y)\in B_{\rho_1}(x)$ for all $t\in[-\tau,\tau]$. 
Consider $\gamma>0$ such that if $\dist(x,y)\leq\gamma$ 
then $|\omega_x(y)|\leq a\tau$. 
To show that $B_\gamma(x)\subset \phi_{[-\tau,\tau]}H_\radio (x)$ 
take $y\in B_\gamma(x)$. 
Since $|\dot \omega_x(y)|\geq a$ there is $s\in[-\tau,\tau]$ such that 
$\omega_x(\phi_s(y))=\omega_x(x)$. Therefore $\phi_s(y)\in H_\radio(x)$. 
Finally, we have that $H_\radio(x)\cap \phi_{[-\tau,\tau]}(y)=\{y\}$ 
because $\dot \omega_x(y)\neq 0$ for all $y\in B_{\rho_1}(x)$.

To show that $H_\rho$ is semicontinuous 
take $y_n\in H_\rho(x_n)$, $x_n\to x$, and $y_n\to y$. 
We will show that $y\in H_\radio(x)$. 
Since $\omega$ is continuous we see that $\omega_x(y)=\omega_x(x)$. 
Since $\dist(x_n,y_n)\leq \radio_n$ for all $n\geq 1$ we have that $\dist(x,y)\leq\radio$. 
Therefore $y\in H_\radio(x)$. 
\end{proof}

\begin{prop}
\label{constrSec}
If $v\colon U(N)\to\R$ is a 1-form and
there is $\tu >0$ such that if $t\in[0,\tu ]$ then 
$(x,\phi_t(x))\in U(N)$ for all $x\in X$ and 
$v_x(\phi_{\tu }(x))\neq 0$ for all $x\in X$ then
\begin{equation}
\label{ecuSecT}
\omega_x( y)=
\int_{0}^{\tu }v_x(\phi_s(y))ds-
\int_{0}^{\tu }v_x(\phi_s(x))ds
\end{equation}
is a 1-form with 
\[
\dot\omega_x(y)= v_x(\phi_{\tu }(y))-v_x(y).
\]
Consequently, 
$\dot\omega_x(y)\neq 0$ if $y$ is close to $x$ and 
$\dot\omega$ is continuous.
\end{prop}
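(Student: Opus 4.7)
The plan is to verify each conclusion of Proposition \ref{constrSec} directly from the defining formula (\ref{ecuSecT}). First, substituting $y = x$ makes the two integrals on the right-hand side identical, so $\omega_x(x) = 0$. Next, to see that $\omega$ is genuinely a 1-form, I need to pin down a field of neighborhoods $N'$ on whose domain $U(N')$ the formula is defined and continuous. Since by hypothesis $\phi_{[0,\tu]}(x) \subset N(x)$ for every $x$, continuity of $\phi$ on the compact slab $X \times [0,\tu]$ combined with the fact that $N$ contains a uniform ball $B_r$ produces a smaller field of neighborhoods $N'$ such that $y \in N'(x)$ forces $\phi_s(y) \in N(x)$ for all $s \in [0,\tu]$. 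On this domain the integrand $(x,y,s) \mapsto v_x(\phi_s(y))$ is jointly continuous, so the integral is continuous in $(x,y)$.

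For the derivative, I would substitute $\phi_t(y)$ for $y$ in (\ref{ecuSecT}) and apply the change of variable $u = s + t$ to the resulting integral; the $\int_0^{\tu} v_x(\phi_s(x))\,ds$ term is independent of $t$ and cancels with the corresponding term of $\omega_x(y)$, leaving
\[
\omega_x(\phi_t(y)) - \omega_x(y) = \int_{\tu}^{\tu + t} v_x(\phi_u(y))\,du - \int_0^t v_x(\phi_u(y))\,du.
\]
Dividing by $t$ and letting $t \to 0$, the fundamental theorem of calculus applied to the continuous function $u \mapsto v_x(\phi_u(y))$ yields $\dot\omega_x(y) = v_x(\phi_{\tu}(y)) - v_x(y)$.

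Finally, setting $y = x$ and using $v_x(x) = 0$ (from $v$ being a 1-form) gives $\dot\omega_x(x) = v_x(\phi_{\tu}(x)) \neq 0$ by the non-vanishing hypothesis, and the map $(x,y) \mapsto v_x(\phi_{\tu}(y)) - v_x(y)$ is continuous on $U(N')$ by continuity of $v$ and $\phi$; hence it stays non-zero on some neighborhood of the diagonal, which delivers the stated conclusion on a possibly still smaller field of neighborhoods. I expect the main technical obstacle to be the first step — producing the domain on which the integral formula defines a continuous function — while the derivative computation and the non-vanishing assertion are essentially routine consequences of the fundamental theorem of calculus and the continuity of $v$.
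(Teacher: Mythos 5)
Your proposal is correct and follows essentially the same route as the paper's proof: the identical change of variable $s\mapsto s+t$ reducing $\omega_x(\phi_t(y))-\omega_x(y)$ to $\int_{\tu}^{\tu+t}v_x(\phi_s(y))\,ds-\int_0^t v_x(\phi_s(y))\,ds$, then the fundamental theorem of calculus for $\dot\omega_x(y)=v_x(\phi_{\tu}(y))-v_x(y)$, and continuity plus compactness for the non-vanishing near the diagonal. The only difference is cosmetic: where you build a smaller field $N'$ for the domain, the paper simply fixes $\rho_0>0$ with $(x,\phi_t(y))\in U$ whenever $\dist(x,y)\leq\rho_0$ and $t\in[0,\tu]$, and it states a uniform bound $\dot\omega_x(y)\geq a>0$ on $B_{\rho_1}(x)$ in place of your pointwise non-vanishing plus continuity argument.
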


\begin{proof}
Notice that $\omega_x(x)=0$ for all $x\in X$.
Take $\radio_0>0$ such that if $\dist(x,y)\leq\radio_0$ and $t\in[0,\tu ]$ then
$(x,\phi_t(y))\in U$. 
In this way (\ref{ecuSecT}) is well defined if $\dist(x,y)\leq\rho_0$.
Let $\rho_1\in(0,\rho_0)$ and $a>0$ be such that 
$v_x(\phi_{\tu }(y))-v_x(y)>a$ for all $x\in X$ and $y\in B_{\rho_1}(x)$.
Notice that 
 \[
 \begin{array}{ll}
  \omega_x(\phi_t(y))-\omega_x(y)&= 
  \int_0^{\tu }v_x(\phi_s(\phi_t(y)))ds-\int_0^{\tu }v_x(\phi_s(y))ds\\
  &=\int_t^{\tu +t}v_x(\phi_s(y))ds - \int _0^{\tu }v_x(\phi_s(y)) ds\\
  &=\int_{\tu }^{\tu +t} v_x(\phi_s(y)) ds - \int_0^tv_x(\phi_s(y)) ds.
 \end{array}
 \]
Then 
$ \dot \omega_x(y)= v_x(\phi_{\tu }(y))-v_x(y)\geq a$ if $y\in B_{\rho_1}(x)$.
\end{proof}

\begin{thm}
\label{teoW}
 If $X$ is a compact metric space then every regular flow admits 
 a field of cross sections.
\end{thm}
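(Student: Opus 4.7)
The strategy is to reduce the construction to the two propositions just proved. Proposition \ref{lemFunD} turns any 1-form $\omega$ with continuous non-vanishing $\dot\omega$ into a field of cross sections $H_\rho=B_\rho\cap\ker(\omega)$. Proposition \ref{constrSec} produces such an $\omega$ out of any input 1-form $v$ for which a uniform $\hat t>0$ exists with $v_x(\phi_{\hat t}(x))\neq 0$ for every $x\in X$. So the whole proof boils down to writing down one continuous function $v\colon U(N)\to\R$ with $v_x(x)=0$ and choosing a uniform $\hat t>0$.

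The main step, and the only place where regularity is really used, is the following ``no small periods'' statement: there exists $\hat t>0$ such that $\phi_{\hat t}(x)\neq x$ for every $x\in X$. I would prove this by contradiction, which is the step I expect to be the main obstacle. If no such $\hat t$ exists, pick $t_n\in(0,1/n]$ and $x_n\in X$ with $\phi_{t_n}(x_n)=x_n$, and extract a convergent subsequence $x_n\to x$. For any real $s$ write $s=k_nt_n+r_n$ with $|r_n|\leq t_n/2$, so that $r_n\to 0$ and by periodicity $\phi_s(x_n)=\phi_{r_n}(x_n)$; continuity of $\phi$ then forces $\phi_s(x)=x$ for every $s\in\R$, contradicting the regularity hypothesis. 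Once $\hat t$ is obtained, compactness of $X$ and continuity of $\phi$ give a uniform lower bound $\eta>0$ for $\dist(x,\phi_{\hat t}(x))$.

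With $\hat t$ in hand, choose $N$ to be the total field $N(x)=X$, so that $U(N)=X\times X$, and set
\[
v_x(y)=\dist(x,y).
\]
This is continuous on $X\times X$ and satisfies $v_x(x)=0$, hence is a 1-form. The hypothesis $(x,\phi_t(x))\in U(N)$ for all $t\in[0,\hat t]$ is automatic, and $v_x(\phi_{\hat t}(x))=\dist(x,\phi_{\hat t}(x))\geq\eta>0$ for every $x$. Proposition \ref{constrSec} then yields a 1-form $\omega$ with
\[
\dot\omega_x(y)=v_x(\phi_{\hat t}(y))-v_x(y)=\dist(x,\phi_{\hat t}(y))-\dist(x,y),
\]
which is continuous on $X\times X$; at $y=x$ it equals $\dist(x,\phi_{\hat t}(x))\geq\eta$, so uniform continuity and compactness give $\rho_1>0$ with $\dot\omega_x(y)\geq\eta/2$ whenever $\dist(x,y)\leq\rho_1$. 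In particular $\dot\omega$ is continuous and non-vanishing near the diagonal, so Proposition \ref{lemFunD} applies and delivers the field of cross sections $H_\rho=B_\rho\cap\ker(\omega)$ for all sufficiently small $\rho>0$.
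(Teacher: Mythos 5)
Your proposal is correct and follows essentially the same route as the paper: the paper's proof of Theorem \ref{teoW} is exactly to take $v_x(y)=\dist(x,y)$ with a uniform $\hat t$ satisfying $\phi_{\hat t}(x)\neq x$ for all $x$, and then invoke Propositions \ref{constrSec} and \ref{lemFunD}. The only difference is that you spell out the compactness arguments the paper leaves implicit (the contradiction argument producing the uniform $\hat t$ from regularity, and the lower bound $\eta$), and both are correct.
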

\begin{proof}
If $\phi$ is a regular flow  then there is $\tu >0$ such that 
$\phi_{\tu }(x)\neq x$ for all $x\in X$.
Define $v_x(y)=\dist(x,y)$ for all $x,y\in X$.
The result follows by Propositions \ref{lemFunD} and \ref{constrSec}.
\end{proof}

\subsection{Continuous fields of connected cross sections}
\label{secCFCS}

In this section we will obtain continuous fields 
of connected cross sections assuming that $X$ is a Peano continuum.
For this, we will need the flow projection from a flow box to a cross section.

\begin{df}
  If $H$ is a field of cross sections of time $\tau$ define the 
  \emph{field of flow boxes} associated to $H$ as $F\colon X\to \hyper X$ by 
  $F(x)=\phi_{[-\tau,\tau]}(H(x))$.
\end{df}

\begin{rmk}
  By Proposition \ref{cuasicont2} every field of flow boxes is a field of neighborhoods.
\end{rmk}

\begin{df}
  Given the field of flow boxes $F$ associated to $H$ define the \emph{flow projection} 
  $\pi_x\colon F(x)\to H(x)$ by $\pi_x(y)=\phi_t(y)\in H(x)$ with $|t|\leq\tau$. 
  Given a field of compact sets $M\subset F$ 
  define $\pi(M)$ by $(\pi(M))(x)=\pi_x(M(x))$.
\end{df}

\begin{prop}
\label{piN}
  If $H$ is a field of cross sections with $F$ the associated field of flow boxes and 
  $N\subset F$ is a field of compact sets then: 
  \begin{enumerate}
    \item $\pi(N)$ is a field of compact sets,
    \item if $N(x)$ is connected then $(\pi(N))(x)$ is connected,
    \item if $N$ is continuous then $\pi(N)$ is continuous and
    \item if $N$ is a field of neighborhoods then $\pi(N)$ is a field of cross sections.
  \end{enumerate}
\end{prop}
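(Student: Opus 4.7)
The plan is to route everything through one auxiliary fact about the flow projections. Write $\tauu>0$ for the time from Definition \ref{dfFCrossSec} and let $\tau<\tauu/2$ be a time for which $H$ is a field of cross sections, so that $\phi\colon[-\tau,\tau]\times H(x)\to X$ is injective for every $x$. The fact I would first establish is the \emph{joint continuity of the projection}: whenever $x_n\to x$, $w_n\in F(x_n)$ and $w_n\to w\in F(x)$, one has $\pi_{x_n}(w_n)\to \pi_x(w)$. The argument is the standard compactness-uniqueness pattern: write $\pi_{x_n}(w_n)=\phi_{t_n}(w_n)$ with $|t_n|\leq\tau$; along any subsequence, extract $t_n\to t$ and use semicontinuity of $H$ to conclude $\phi_t(w)\in H(x)$; then compare with $\pi_x(w)=\phi_{t^*}(w)\in H(x)$. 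Since $|t-t^*|\leq 2\tau<\tauu$, the uniqueness clause in the definition of a field of cross sections forces $t=t^*$, so the whole sequence converges to $\pi_x(w)$.

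With this joint continuity in hand, items (1) and (3) fall out uniformly. For (1), $x\in\pi(N)(x)$ is clear since $x\in N(x)$ and $\pi_x(x)=x$. For semicontinuity, if $z_n\in\pi(N)(x_n)$ and $z_n\to z$, write $z_n=\pi_{x_n}(y_n)$ with $y_n\in N(x_n)$, extract $y_n\to y$, place $y\in N(x)$ by semicontinuity of $N$, and apply the joint continuity to get $z=\pi_x(y)\in\pi(N)(x)$. For (3), the same argument yields upper semicontinuity of $\pi(N)$, while for lower semicontinuity we take $z=\pi_x(y)\in\pi(N)(x)$ and invoke continuity (not just semicontinuity) of $N$ to produce $y_n\in N(x_n)$ with $y_n\to y$; then $\pi_{x_n}(y_n)\to z$ by the joint continuity, exhibiting $z$ as a limit of points of $\pi(N)(x_n)$.

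For (2), I would simply observe that for fixed $x$ the projection $\pi_x$ is continuous on $F(x)$: by the proposition preceding Definition \ref{dfFCrossSec}, $\phi\colon[-\tau,\tau]\times H(x)\to F(x)$ is a homeomorphism, and $\pi_x$ is the composition of its inverse with the projection onto the second factor. Hence $\pi_x(N(x))$ is the continuous image of a compact connected set, and is therefore connected. For (4), the inclusion $\pi(N)\subset H$ inherits the uniqueness clause of Definition \ref{dfFCrossSec} with the same $\tauu$. For the neighborhood property, fix $r>0$ with $B_r\subset N$, and use that $F$ is a field of neighborhoods (Proposition \ref{cuasicont2}) to pick $\delta\in(0,r)$ with $B_\delta(x)\subset F(x)$ for every $x$. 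For $y\in B_\delta(x)\subset F(x)\cap N(x)$ we have $\pi_x(y)\in\pi(N)(x)$ and $y\in\phi_{[-\tau,\tau]}(\pi_x(y))$, so $B_\delta\subset\phi_{[-\tauu,\tauu]}(\pi(N))$.

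The only delicate step is the joint continuity lemma, where one must use $2\tau<\tauu$ so that the difference of admissible projection times stays inside the uniqueness window. Everything else is routine bookkeeping from the semicontinuity of $H$ and $N$ together with the definitions.
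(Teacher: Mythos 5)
Your proof is correct, and it reorganizes the paper's argument rather than reproducing it. For items (1) and (3) the paper runs the same compactness-plus-uniqueness extraction twice, inlined separately in the semicontinuity proof and in the continuity proof; you factor it once into a joint-continuity lemma for $(x,w)\mapsto\pi_x(w)$ and derive both items from it, so the $2\tau<\tauu$ bookkeeping appears in exactly one place (item (2) is the same in both: $\pi_x$ is continuous on the compact flow box). The genuine divergence is item (4): the paper applies Proposition \ref{propSubCross} to get that the intersection of $N$ with the cross-section field is itself a field of cross sections and then sandwiches it, $N\cap H\subset\pi(N)\subset H$, via Proposition \ref{H123}; you instead verify Definition \ref{dfFCrossSec} directly, inheriting the uniqueness clause from the inclusion $\pi(N)\subset H$ and producing the uniform ball by projecting $B_\delta\subset N\cap F$ into $\pi(N)$. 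Your route is self-contained (it essentially re-proves the content of Proposition \ref{propSubCross} in situ) and it incidentally sidesteps a slip in the paper's text at this point, where ``$N\cap F$'' is written although $N\cap F=N$ under the hypothesis $N\subset F$ and what is meant is $N\cap H$; the paper's route is shorter on the page because it reuses the earlier propositions. Two small points of wording in yours, neither affecting correctness: from the uniqueness clause alone you only obtain $\phi_t(w)=\phi_{t^*}(w)$, and the conclusion $t=t^*$ requires the injectivity of $\phi$ on $[-\tau,\tau]\times H(x)$ (absence of periods $\leq 2\tau$), which your choice of $\tau$ as a time for $H$ does supply, so you should cite that rather than the $\tauu$-clause; and in item (3) the passage from pointwise approximation of each $z\in\pi(N)(x)$ to the uniform Hausdorff estimate $\pi(N)(x)\subset B_\epsilon(\pi(N)(x_n))$ needs one more routine compactness extraction, which you glossed but which is standard.
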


\begin{proof} Define $J=\pi(N)$.

  Item 1. We have to prove that $J$ is semicontinuous. 
  Take $x_n\to x$ and $y_n\in J(x_n)$ with $y_n\to y$. 
  Consider $z_n\in N(x_n)$ with $\pi_{x_n}(z_n)=\phi_{t_n}(z_n)=y_n$, $z_n\to z$ 
  and $t_n\to t$.
  Since $N$ is semicontinuous we have that $z\in N(x)$. 
  Also, $\phi_t(z)=y$, $t\in [-\tau,\tau]$ and since $y_n\in J(x_n)\subset H(x_n)$  
  and $H$ is semicontinuous we have that $y\in H(x)$. 
  Then $y=\pi_x(z)$ and $y\in J(x)$.
 
  Item 2. It follows because $\pi_x$ is continuous.
  
  Item 3. We know that $J$ is semicontinuous. Assume that it is not continuous. 
  Then there are $\epsilon>0$ and $x,x_n\in X$ such that 
  $x_n\to x$, $J(x)\nsubseteq B_\epsilon(J(x_n))$. 
  Then there is $z_n\in J(x)$ such that $z_n\notin B_\epsilon(J(x_n))$. 
  Take $u_n\in N(x)$ and $t_n\in[-\tau,\tau]$ such that $\phi_{t_n}(u_n)=z_n$. 
  Since $N$ is continuous there is $v_n\in N(x_n)$ such that $\dist(u_n,v_n)\to 0$. 
  Take $s_n\in[-\tau,\tau]$ such that $z'_n=\phi_{s_n}(v_n)\in J(x_n)$. 
  We have that $\dist(z'_n,z_n)>\epsilon$. 
  Taking limit (and subsequences) $n\to \infty$ we can assume that $t_n\to t$, $s_n\to s$, $z_n\to z$, $z'_n\to z'$ and $u_n,v_n\to u$.
  By the semicontinuity of $J$ we have that $z,z'\in J(x)$. Also $\dist(z,z')\geq \epsilon$. 
  By continuity we have $\phi_t(u)=z$ and $\phi_s(v)=z'$. 
  Therefore, $z=\phi_{t-s}(z')$. 
  Since $z,z'\in J(x)$ and $|t-s|\leq 2\tau$ we have a contradiction because $\tau$ is a time for $H$.
  
  Item 4. By Proposition \ref{propSubCross} we know that $N\cap F$ is a field of cross sections. 
  Notice that $N\cap F\subset J\subset H$. 
  Since we have proved that $J$ is semicontinuous we can apply 
  Proposition \ref{H123} to conclude that $J$ is a field of cross sections.
\end{proof}

The following result gives us continuous fields of connected cross sections.

\begin{prop}
\label{subfcont}
 If $X$ is a Peano continuum and $\phi$ is a regular flow 
 then every field of cross sections $H\colon X\to \hyper X$ 
 admits a continuous subfield of cross sections $H'\colon X\to\continua X$.
\end{prop}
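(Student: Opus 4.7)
The plan is to use the flow-projection machinery of Proposition \ref{piN} applied to a one-parameter field of \emph{connected} neighborhoods coming from the Peano-continuum hypothesis via Theorem \ref{contB}. Given the field of cross sections $H$ of time $\tau$, let $F(x)=\phi_{[-\tau,\tau]}(H(x))$ be the associated field of flow boxes and $\pi_x\colon F(x)\to H(x)$ the flow projection. By the Remark following the definition of $F$, $F$ is a field of neighborhoods, so there exists $\gamma>0$ with $B_\gamma\subset F$.

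Since $X$ is a Peano continuum, Theorem \ref{contB} supplies a one-parameter field of neighborhoods $N\colon[0,1]\times X\to\continua X$ with connected values and $N_0(x)=\{x\}$. By continuity of $N$ and compactness of $X$, for every $\epsilon>0$ there is $r_0>0$ such that $N_r(x)\subset B_\epsilon(x)$ whenever $0\le r\le r_0$ and $x\in X$. Choosing $\epsilon=\gamma$, we obtain $r_0>0$ with $N_{r_0}\subset F$, while $N_{r_0}$ is a continuous field of connected neighborhoods of $X$.

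Now set $H'=\pi(N_{r_0})$. Since $N_{r_0}\subset F$, the projection is defined and $H'(x)\subset H(x)$ for every $x\in X$, so $H'$ is a subfield of $H$. Applying Proposition \ref{piN}: item (1) gives that $H'$ is a field of compact sets; item (2), together with connectedness of each $N_{r_0}(x)$, gives $H'(x)\in\continua X$; item (3) gives continuity of $H'$; and item (4), since $N_{r_0}$ is a field of neighborhoods, gives that $H'$ is a field of cross sections. This produces the desired continuous subfield $H'\colon X\to\continua X$.

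The only delicate step is verifying the containment $N_{r_0}\subset F$, which is why the Peano-continuum hypothesis (via the existence of a one-parameter field of connected neighborhoods that collapses to points as $r\to 0$) is essential: without it, one cannot simultaneously shrink the connected sets $N_r(x)$ uniformly into the flow-box neighborhoods $F(x)$. Once this containment is secured, everything else is a direct invocation of Proposition \ref{piN}.
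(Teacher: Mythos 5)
Your proof is correct and follows the same route as the paper: obtain from Theorem \ref{contB} a continuous field of connected neighborhoods contained in the field of flow boxes $F$, and push it into $H$ via the flow projection using Proposition \ref{piN}. The only difference is that you spell out the compactness argument producing the uniform containment $N_{r_0}\subset F$, a step the paper's proof leaves implicit; your verification of it is sound.
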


\begin{proof}
  Let $H$ be a field of cross sections. 
  Since $X$ is a Peano continuum we know from Theorem \ref{contB} that 
  there is a continuous field of neighborhoods $N\colon X\to\continua X$ with 
  $N\subset F$.
  By Proposition \ref{piN} we know that $H'=\pi(N)\colon X\to\continua X$ is a
  continuous field of cross sections satisfying $H'\subset H$.
\end{proof}

\begin{thm}
 If $X$ is a Peano continuum then every regular flow 
 admits a continuous field of connected cross sections $H\colon X\to\continua X$.
\end{thm}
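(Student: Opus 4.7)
The plan is to compose the two main results that have been assembled earlier in this section, with essentially no new work required. The existence part and the continuity/connectedness part have been separated, and the final theorem is just their concatenation.

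First, I would invoke Theorem \ref{teoW}: since $X$ is a Peano continuum it is in particular a compact metric space, so the regular flow $\phi$ admits some field of cross sections $H_0 \colon X \to \hyper X$. Nothing more is claimed about $H_0$; in particular $H_0$ is only semicontinuous, and the sets $H_0(x)$ need not be connected. This step is essentially free: the construction behind it (topological 1-forms, Propositions \ref{lemFunD} and \ref{constrSec}, with the auxiliary 1-form $v_x(y)=\dist(x,y)$) is what does the work, but it has already been carried out.

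Next I would apply Proposition \ref{subfcont} to $H_0$. The hypotheses of that proposition match the present setting exactly (Peano continuum, regular flow, field of cross sections given), and it produces a continuous subfield $H \subset H_0$ with $H(x) \in \continua X$ for every $x \in X$. This $H$ is precisely the object the theorem asks for: it is a field of cross sections (by the last clause of Proposition \ref{piN}, or equivalently by what Proposition \ref{subfcont} asserts), it is continuous with respect to the Hausdorff metric, and each value is a continuum.

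There is no real obstacle at this stage; the difficulty has been absorbed into the earlier results. The two ingredients worth highlighting are Theorem \ref{contB}, which supplies a continuous field of connected neighborhoods $N \colon X \to \continua X$ inside any prescribed field of neighborhoods (using the convex metric coming from the Peano property via Remark \ref{rmkPeanoConvex}), and Proposition \ref{piN}, which shows that the flow projection $\pi(N)$ of such an $N$, taken inside the flow box field $F$ associated to $H_0$, simultaneously inherits continuity, connectedness, and the cross-section property. With those in hand the proof collapses to one sentence: take $H_0$ from Theorem \ref{teoW} and set $H = \pi(N)$ as produced by Proposition \ref{subfcont}.
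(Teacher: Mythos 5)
Your proposal is correct and follows essentially the same route as the paper: the paper's own proof also takes the field of cross sections from Theorem \ref{teoW}, then obtains a continuous connected-valued field of neighborhoods $N\subset\phi_{[-\tau,\tau]}(H')$ via Theorem \ref{contB} and projects it by Proposition \ref{piN}, which is exactly the content of Proposition \ref{subfcont} that you invoke. Your packaging through Proposition \ref{subfcont} is, if anything, slightly cleaner, since the paper's proof of the theorem repeats that proposition's argument inline.
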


\begin{proof}
  By Theorem \ref{teoW} we know that there is a field of cross sections $H'$. 
  By Theorem \ref{contB} there is a continuous field of neighborhoods 
  $N\subset \phi_{[-\tau,\tau]}(H')$ 
  with $N\colon X\to \continua X$. 
  By Proposition \ref{piN} we have that $H=\pi(N)$ is a continuous field of cross sections, moreover, 
  each $H(x)$ is connected.
\end{proof}

\subsection{Transpose fields and 1-forms}
\label{secTFF}

The formalism introduced in this section will be applied in the next section 
to study the monoticy of fields of cross section.

\begin{df}[Transpose field]
  Given a field $h\colon X\to\hyper X$ define its \emph{transpose} 
  $h^\tra\colon X\to\hyper X$ by
  $$h^\tra(x)=\{y\in X:x\in h(y)\}.$$ 
  We say that $h$ is \emph{symmetric} if $h^\tra=h$.
\end{df}

\begin{prop}
\label{propsAdj}
  The following statements hold:
  \begin{enumerate}
    \item $h^{\tra\tra}=h$ for every field $h$,
    \item $h^\tra$ is a field of compact sets if and only if $h$ is a field of compact sets,
    \item the field of balls $B_\rho$ is symmetric,
    \item $h_1\subset h_2$ if and only if $h_1^\tra\subset h_2^\tra$,
    \item $N$ is a field of neighborhoods if and only if $N^\tra$ is a field of neighborhoods,
    \item $(h_1\cap h_2)^\tra=h_1^\tra\cap h_2^\tra$
  \end{enumerate}
\end{prop}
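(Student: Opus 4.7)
The plan is to prove all six claims by unwinding the definition $h^\tra(x)=\{y\in X:x\in h(y)\}$. Items (1), (3), (4), (6) are essentially formal manipulations, so I would dispatch them quickly; (5) uses the symmetry of the metric, and the only real work is the preservation of semicontinuity in (2).

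For (1), I would simply chase definitions: $y\in h^{\tra\tra}(x)$ iff $x\in h^\tra(y)$ iff $y\in h(x)$. Item (4) is immediate: $h_1\subset h_2$ means $x\in h_1(y)\Rightarrow x\in h_2(y)$, and transposing reverses the roles of $x$ and $y$, giving $h_1^\tra\subset h_2^\tra$; the converse follows by applying the transpose once more and using (1). Item (6) is the same kind of chase: $y\in(h_1\cap h_2)^\tra(x)$ iff $x\in h_1(y)$ and $x\in h_2(y)$, which is exactly $y\in h_1^\tra(x)\cap h_2^\tra(x)$. For (3), the symmetry $\dist(x,y)=\dist(y,x)$ gives $B_\rho^\tra=B_\rho$ directly. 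For (5), I would first observe that $B_r\subset N$ is the statement ``$\dist(x,y)\leq r$ implies $y\in N(x)$'', and after swapping the names of $x$ and $y$ and invoking symmetry of $\dist$, this is equivalent to ``$\dist(x,y)\leq r$ implies $x\in N(y)$'', i.e.\ $B_r\subset N^\tra$. The converse direction is the same statement read the other way.

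The substantive step is (2). The reflexivity condition $x\in h^\tra(x)$ is immediate from $x\in h(x)$, so the point is to transfer semicontinuity. I would use the sequential form: suppose $x_n\to x$ and $h^\tra(x_n)\to C$ in the Hausdorff metric, and pick an arbitrary $y\in C$. By Hausdorff convergence there exist $y_n\in h^\tra(x_n)$ with $y_n\to y$, which by definition means $x_n\in h(y_n)$. Now I use compactness of $(\hyper X,\dist_H)$ to extract a subsequence along which $h(y_n)\to D$ for some $D\in\hyper X$. Since $x_n\in h(y_n)$ and $x_n\to x$, the Hausdorff convergence forces $x\in D$. The hypothesis that $h$ is semicontinuous, applied to the sequence $y_n\to y$, gives $D\subset h(y)$, hence $x\in h(y)$, i.e.\ $y\in h^\tra(x)$. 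This shows $C\subset h^\tra(x)$, establishing semicontinuity of $h^\tra$. The converse direction of (2) is automatic from this and item~(1).

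The main (mild) obstacle is item (2): one must avoid the trap of trying to relate $h^\tra(x_n)$ to $h(x_n)$ directly, and instead realize that the information flows through a separate Hausdorff limit of the sets $h(y_n)$ indexed by the auxiliary sequence $y_n$. Compactness of the hyperspace is what makes the subsequence extraction legitimate, and the semicontinuity of $h$ is applied at the point $y$, not at $x$.
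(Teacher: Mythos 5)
Your proposal is correct and follows essentially the same route as the paper: definition-chasing for items (1), (3), (4), (6), and transferring semicontinuity through the sequential characterization for item (2). The only differences are cosmetic — in (2) you spell out the hyperspace-compactness extraction that the paper's one-line appeal to semicontinuity (in its closed-graph form: $x_n\in h(y_n)$, $x_n\to x$, $y_n\to y$ imply $x\in h(y)$) leaves implicit, and in (5) you verify the ball containment directly where the paper cites items (3) and (4).
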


\begin{proof}
\begin{enumerate}
  \item Notice that: $y\in h(x)\Leftrightarrow x\in h^\tra(y)\Leftrightarrow 
  y\in h^{\tra\tra}(x)$.
  \item Assume that $h$ is semicontinuous and take 
  $y_n\in h^\tra(x_n)$ with $x_n\to x$ and $y_n\to y$. 
  Then $x_n\in h(y_n)$. Since $h$ is semicontinuous we have that 
  $x\in h(y)$. Then $y\in h^\tra(x)$. The converse follows by this and item 1.
  \item It follows by the symmetry of the metric. 
  \item Suppose that $h_1\subset h_2$. 
  If $y\in h_1^\tra(x)$ then $x\in h_1(y)$. 
  So, $x\in h_2(y)$ and $y\in h_2^\tra(x)$.
  The converse follows by this and item 1.
  \item If $N$ is a field of neighborhoods then there is $\rho>0$ such that 
  $B_\rho\subset N$. Then (item 4) $B^\tra_\rho\subset N^\tra$. 
  Since $B_\rho=B^\tra_\rho$ (item 3)
  we have that $B_\rho\subset N^\tra$.
  \item Notice that $y\in (h_1\cap h_2)^\tra_x\Leftrightarrow
  x\in(h_1\cap h_2)_y
  \Leftrightarrow [x\in h_1(y)$ and $x\in h_2(y)]
  \Leftrightarrow
  [y\in h_1^\tra(x)$ and $y\in h_2^\tra(x)]\Leftrightarrow y\in (h_1^\tra\cap h_2^\tra)_x$.\qedhere
\end{enumerate}
\end{proof}

\begin{df}
  Given a 1-form $\omega\colon U(N)\to\R$ define its 
  \emph{transpose 1-form} $$\omega^\tra\colon U(N^\tra)\to\R$$ as 
  $\omega^\tra_x(y)=\omega_y(x)$. We say that $\omega$ is \emph{symmetric} 
  if $\omega^\tra=\omega$.
\end{df}

\begin{prop}
\label{kerOmega}
It holds that
     $\ker(\omega^\tra)=(\ker(\omega))^\tra$.
\end{prop}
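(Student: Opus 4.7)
The plan is to prove the equality by unwinding the two definitions and showing that both sides consist of exactly the pairs $(x,y)$ satisfying $x\in N(y)$ together with $\omega_y(x)=0$.

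First I would fix $x\in X$ and chase the membership condition $y\in\ker(\omega^\tra)_x$. By the definition of the kernel, this is equivalent to the conjunction $y\in N^\tra(x)$ and $\omega^\tra_x(y)=0$. The first clause, by the definition of transpose field, means exactly $x\in N(y)$; the second clause, by the definition of transpose 1-form, means exactly $\omega_y(x)=0$. Hence
\[
  \ker(\omega^\tra)_x=\{y\in X:x\in N(y)\text{ and }\omega_y(x)=0\}.
\]

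Next I would do the same with $(\ker(\omega))^\tra(x)$. By the definition of transpose field applied to $\ker(\omega)$, we have $y\in(\ker(\omega))^\tra(x)$ if and only if $x\in\ker(\omega)_y$, which by the definition of kernel is the conjunction $x\in N(y)$ and $\omega_y(x)=0$. Thus
\[
  (\ker(\omega))^\tra(x)=\{y\in X:x\in N(y)\text{ and }\omega_y(x)=0\},
\]
which coincides with the set computed above.

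Since this equality of fibers holds for every $x\in X$, the two fields agree. The argument is purely formal: the only thing to verify is that the two definitions are compatible, namely that $\omega^\tra$ is well-defined on $U(N^\tra)$ precisely because $(x,y)\in U(N^\tra)$ iff $(y,x)\in U(N)$, and this compatibility is built into the definition of the transpose 1-form. There is no real obstacle here; the proof is a diagram chase among four equivalent conditions.
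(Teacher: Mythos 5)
Your proof is correct and follows essentially the same route as the paper's: a direct chase through the definitions showing $y\in\ker(\omega^\tra)_x \Leftrightarrow \omega^\tra_x(y)=0 \Leftrightarrow \omega_y(x)=0 \Leftrightarrow x\in\ker(\omega)_y \Leftrightarrow y\in(\ker(\omega))^\tra_x$. If anything, you are slightly more careful than the paper, which suppresses the domain condition ($y\in N^\tra(x)$ versus $x\in N(y)$) that you track explicitly.
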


\begin{proof}
It follows by definitions:
    $y\in\ker(\omega^\tra)_x\Leftrightarrow
    \omega^\tra_x(y)=0\Leftrightarrow
    \omega_y(x)=0\Leftrightarrow
    x\in\ker(\omega)_y\Leftrightarrow
    y\in(\ker(\omega))_x^\tra$.
\end{proof}

\subsection{Monotonous fields of cross sections}
\label{secMFCS}

\begin{df}
A field of cross sections $H\colon X\to \hyper X$ is \emph{monotonous} 
if there is $\epsilon>0$ such that for all $t\in (0,\epsilon)$ it holds 
that $H(x)\cap H(\phi_t(x))=\emptyset$ for all $x\in X$.
\end{df}

\begin{rmk}
\label{subMono}
  Every subfield of cross sections of a monotonous field of cross sections is monotonous.
\end{rmk}

\begin{prop}
\label{monoSecEquiv}
  A field of cross sections $H$ is monotonous 
  if and only if $H^\tra$ is a monotonous field of cross sections.
\end{prop}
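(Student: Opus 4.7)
The plan is to exploit the involution $h^{\tra\tra}=h$ from Proposition \ref{propsAdj}(1): it reduces the ``if and only if'' to a single implication, namely that if $H$ is a monotonous field of cross sections then so is $H^\tra$. So I assume $H$ is a monotonous field of cross sections of time $\tau$ with monotonicity constant $\epsilon$, and I will produce a time and a monotonicity constant for $H^\tra$ by pushing every ``$\in$'' through the transpose. Semicontinuity of $H^\tra$ is automatic from Proposition \ref{propsAdj}(2).

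For the neighborhood condition, I would exploit the duality between the ``tube'' field $N_\sigma(x)=\bigcup_{|s|\leq\sigma}H(\phi_s(x))$ and the $\sigma$-saturation $\phi_{[-\sigma,\sigma]}(H^\tra)$. Directly unwinding the definition of the transpose gives
\[
  N_\sigma^{\tra} \;=\; \phi_{[-\sigma,\sigma]}(H^\tra),
\]
since $y\in N_\sigma^\tra(x)$ means $x\in H(\phi_s(y))$ for some $|s|\leq\sigma$, which is equivalent to $\phi_s(y)\in H^\tra(x)$, i.e.\ $y\in\phi_{-s}(H^\tra(x))$. Proposition \ref{cajaDos} tells us $N_\sigma$ is a field of neighborhoods, and Proposition \ref{propsAdj}(5) transfers this to $N_\sigma^\tra$; hence $\phi_{[-\sigma,\sigma]}(H^\tra)$ is a field of neighborhoods. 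For the local transversality clause of Definition \ref{dfFCrossSec}, suppose $y\in H^\tra(x)$ and $\phi_t(y)\in H^\tra(x)$ with $0<|t|\leq\sigma<\epsilon$. Unfolding transposes, $x\in H(y)\cap H(\phi_t(y))$; applying monotonicity of $H$ at $y$ for $t>0$, or at $\phi_t(y)$ with time $-t>0$ for $t<0$, forces this intersection to be empty, a contradiction. Hence $H^\tra$ is a field of cross sections.

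The monotonicity of $H^\tra$ is where the cross-section property of $H$ (rather than its monotonicity) is doing the work. If $y\in H^\tra(x)\cap H^\tra(\phi_t(x))$ with $0<t\leq\tau$, then both $x$ and $\phi_t(x)$ lie in $H(y)$, contradicting $H(y)\cap\phi_{[-\tau,\tau]}(x)=\{x\}$ combined with $\phi_t(x)\neq x$ (the latter being the injectivity part of the proposition following Definition \ref{dfFCrossSec}). Thus $H^\tra$ is monotonous with constant $\tau$, and applying the whole argument to $H^\tra$ in place of $H$ (using $H^{\tra\tra}=H$) closes the converse. The only step I expect to require genuine bookkeeping is the duality identity $N_\sigma^\tra=\phi_{[-\sigma,\sigma]}(H^\tra)$; every other step is a tautological rewrite of membership through the transpose.
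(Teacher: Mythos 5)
Your proof is correct and follows essentially the same route as the paper's: the involution $H^{\tra\tra}=H$ reduces the claim to one direction, the tube field $N_\sigma$ of Proposition \ref{cajaDos} yields the neighborhood property of $\phi_{[-\sigma,\sigma]}(H^\tra)$, monotonicity of $H$ gives the section property of $H^\tra$, and the section property of $H$ gives monotonicity of $H^\tra$. Your only (harmless) variation is packaging the neighborhood step as the identity $N_\sigma^\tra=\phi_{[-\sigma,\sigma]}(H^\tra)$ together with Proposition \ref{propsAdj}(5), where the paper unwinds the same memberships by hand.
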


\begin{proof}
First assume that $H$ is monotonous.
By Proposition \ref{propsAdj} we know that $H^\tra$ is a field of compact sets. 
By Proposition \ref{cajaDos} we know that 
$$N_\epsilon(y)=\cup_{|t|\leq\epsilon}H(\phi_t(y))$$ is 
a field of neighborhoods 
for all $\epsilon>0$.
For $\epsilon>0$ fixed we can take $\rho>0$ such that $B_\rho\subset N_\epsilon$. 
Let us prove that $B_\rho\subset \phi_{[-\epsilon,\epsilon]}(H^\tra)$. 
Take $y\in B_\rho(x)$. Then $x\in N_\epsilon(y)$. 
Therefore there is $s\in[-\epsilon,\epsilon]$ such that 
$x\in H(\phi_s(y))$. 
Thus, $\phi_s(y)\in H^\tra(x)$ and $y\in \phi_{[-\epsilon,\epsilon]}(H^\tra(x))$. 

Since $H$ is monotonous there is $\epsilon>0$ such that 
$H(x)\cap H(\phi_t(x))=\emptyset$ 
for all $x\in X$ and $t\in[-\epsilon,\epsilon]$, $t\neq 0$.
Let us show that $H^\tra(x)\cap \phi_{[-\epsilon,\epsilon]}(y)=\{y\}$ for all 
$x\in X$ and $y\in H^\tra(x)$. 
Take $z\in H^\tra(x)\cap \phi_{[-\epsilon,\epsilon]}(y)$. 
Then $x\in H(z)$ and there is $s\in[-\epsilon,\epsilon]$ such that $z=\phi_s(y)$ with 
$y\in H^\tra(x)$, i.e., $x\in H(y)$. 
Then $x\in H(z)\cap H(\phi_s(z))$. Since $H$ is monotonous, this implies that 
$s=0$ and $y=z$ as we wanted to prove.

Since $H$ is a field of cross sections there is $\tau>0$ such that 
$H(y)\cap \phi_{[-\tau,\tau]}(x)=\{x\}$ for all 
$y\in X$ and $x\in H(y)$. 
We will prove that 
$H^\tra(x)\cap H^\tra(\phi_t(x))=\emptyset$ if 
$0<|t|\leq\tau$. 
If $y\in H^\tra(x)\cap H^\tra(\phi_t(x))$ with $|t|\leq\tau$ 
then $x\in H(y)$ and $\phi_t(x)\in H(y)$. 
This implies that $t=0$ and the first part of the proof ends.

The converse follows because $H^{\tra\tra}=H$.
\end{proof}

The notation $\dot v^\tra$ means the derivative of $v^\tra$ (it is not the transpose of $\dot v$).

\begin{prop}
\label{secvvest}
If $v$ is a 1-form such that $\dot v\neq 0$ and $\dot v^\tra\neq 0$ 
then $H_\rho=B_\rho\cap\ker(v)$ 
is a monotonous field of cross sections for $\rho$ small.
\end{prop}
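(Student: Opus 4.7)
The plan: The first conclusion, that $H_\rho = B_\rho \cap \ker(v)$ is a field of cross sections for small $\rho$, is immediate from Proposition \ref{lemFunD} applied directly to $\omega = v$, since $\dot v$ is non-vanishing (and continuous). The content of the proposition is the monotonicity claim, and this is where the symmetric hypothesis $\dot v^\tra \neq 0$ comes in.

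To prove monotonicity, I would argue by contradiction. If $H_\rho$ failed to be monotonous, then for arbitrarily small $\rho$ there would exist sequences $t_n \downarrow 0$, points $x_n \in X$, and $y_n \in H_\rho(x_n) \cap H_\rho(\phi_{t_n}(x_n))$. Unwinding the definition of $\ker(v)$, this gives $v_{x_n}(y_n) = 0 = v_{\phi_{t_n}(x_n)}(y_n)$, and passing to transposes, $v^\tra_{y_n}(x_n) = 0 = v^\tra_{y_n}(\phi_{t_n}(x_n))$. Consider the real-valued function
\[
g_n(s) = v^\tra_{y_n}(\phi_s(x_n)), \qquad s \in [0, t_n].
\]
By the flow property, its derivative at any $s_0$ is exactly $\dot v^\tra_{y_n}(\phi_{s_0}(x_n))$, so $g_n$ is differentiable on $[0,t_n]$ and vanishes at both endpoints. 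Rolle's theorem then supplies some $s_n^\star \in (0, t_n)$ with $\dot v^\tra_{y_n}(\phi_{s_n^\star}(x_n)) = 0$.

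To derive a contradiction, I would use that $\dot v^\tra$ is continuous and non-vanishing on a neighborhood of the diagonal $\{(x,x) : x \in X\}$ inside $U(N^\tra)$, so compactness of $X$ yields uniform constants $\rho_0, a > 0$ such that $|\dot v^\tra_x(y)| \geq a$ whenever $\dist(x,y) \leq \rho_0$. Choosing $\rho < \rho_0/2$ and restricting to $n$ large enough that $\dist(x_n, \phi_{s_n^\star}(x_n)) < \rho_0/2$, the pair $(y_n, \phi_{s_n^\star}(x_n))$ remains in this uniform neighborhood, contradicting $\dot v^\tra_{y_n}(\phi_{s_n^\star}(x_n)) = 0$. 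The main obstacle is exactly this uniform lower bound: one needs to be careful that $y_n$ and $\phi_{s}(x_n)$ stay close enough, uniformly in $n$ and $s \in [0,t_n]$, for the continuous non-vanishing function $\dot v^\tra$ to remain bounded away from zero — but compactness of $X$ together with the hypothesis $y_n \in B_\rho(x_n)$ handles this for all sufficiently small $\rho$.
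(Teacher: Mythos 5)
Your proposal is correct and takes essentially the same route as the paper: both get the cross-section property from Proposition \ref{lemFunD} and prove monotonicity by transposing, so that $y\in H_\rho(x)\cap H_\rho(\phi_t(x))$ forces $v^\tra_y$ to vanish at both $x$ and $\phi_t(x)$, which $\dot v^\tra\neq 0$ forbids. Your Rolle-plus-compactness packaging merely makes explicit the uniform lower bound on $|\dot v^\tra|$ near the diagonal that the paper's terse closing step (``Since $\dot v^\tra\neq 0$ we have that $t=0$'') leaves implicit.
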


\begin{proof}
  By Proposition \ref{lemFunD} we know that 
  $H_\rho$ is a field of cross sections because $\dot v\neq 0$. 
  Now suppose that $y\in H(x)\cap H(\phi_t(x))$. 
  This implies that $v_x(y)=v_{\phi_t(x)}(y)=0$. 
  Then $v^\tra_y(x)=v^\tra_y(\phi_t(x))=0$. 
  Since $\dot v^\tra\neq 0$ we have that $t=0$ and $H_\rho$ is monotonous.
\end{proof}

\begin{prop}
\label{secmonot}
  If $\dot\omega=1$ and the 1-form $v$ is defined by
  $$v_x(y)=\int_0^{\tu }w^\tra_x(\phi_s(y)) ds-\frac{\tu ^2}2$$
  then $\dot v^\tra=\tu$, 
  $\dot v_y(x)=\omega^\tra_y(\phi_{\tu}(x))-\omega^\tra_y(x)$ and $H_\rho=B_\rho\cap\ker(v)$ is a monotonous field of cross sections for $\rho$ small.
\end{prop}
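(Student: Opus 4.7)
The strategy is largely parallel to Proposition \ref{constrSec}, but with $\omega^\tra$ playing the role that $v$ played there, together with an additional computation for the transposed derivative $\dot v^\tra$. I would organize the proof into three steps: verify that $v$ is indeed a 1-form, compute $\dot v_y(x)$ and $\dot v^\tra_x(y)$ in closed form, then invoke Proposition \ref{secvvest}.

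For the 1-form property, I would exploit the hypothesis $\dot\omega=1$ to derive the orbit identity $\omega_x(\phi_t(y))=\omega_x(y)+t$. Evaluating at $x=\phi_s(x_0)$, $y=x_0$ and using $\omega_{\phi_s(x_0)}(\phi_s(x_0))=0$ gives a closed expression for $\omega^\tra_{x_0}(\phi_s(x_0))=\omega_{\phi_s(x_0)}(x_0)$ that is linear in $s$, and the resulting integral balances the constant term in the definition of $v$, yielding $v_{x_0}(x_0)=0$. Continuity of $v$ is inherited from $\omega^\tra$ and the integral.

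For $\dot v_y(x)$ I would apply the substitution-of-variables trick of Proposition \ref{constrSec}: write
\[
v_y(\phi_t(x))-v_y(x)=\int_t^{\tu+t}\omega^\tra_y(\phi_u(x))\,du-\int_0^{\tu}\omega^\tra_y(\phi_u(x))\,du,
\]
which after cancellation becomes $\int_{\tu}^{\tu+t}\omega^\tra_y(\phi_u(x))\,du-\int_0^t\omega^\tra_y(\phi_u(x))\,du$; dividing by $t$ and letting $t\to 0$ yields $\dot v_y(x)=\omega^\tra_y(\phi_\tu(x))-\omega^\tra_y(x)$. For $\dot v^\tra_x(y)$ the argument is different: using $\omega^\tra_{\phi_t(y)}(\phi_s(x))=\omega_{\phi_s(x)}(\phi_t(y))$, the difference quotient $\tfrac{1}{t}[v_{\phi_t(y)}(x)-v_y(x)]$ becomes $\tfrac{1}{t}\int_0^{\tu}[\omega_{\phi_s(x)}(\phi_t(y))-\omega_{\phi_s(x)}(y)]\,ds$, and passing the limit under the integral (using continuity of $\dot\omega$) gives $\int_0^{\tu}\dot\omega_{\phi_s(x)}(y)\,ds=\tu$.

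With both derivatives in hand, I would invoke Proposition \ref{secvvest}. Clearly $\dot v^\tra\equiv\tu\neq 0$; and using the orbit identity from Step 1 we have $\dot v_y(y)=\omega^\tra_y(\phi_\tu(y))-\omega^\tra_y(y)=-\tu\neq 0$, which extends to a neighborhood of the diagonal by continuity of $\omega^\tra$. Proposition \ref{secvvest} then gives that $H_\rho=B_\rho\cap\ker(v)$ is a monotonous field of cross sections for $\rho$ small. The main obstacle I anticipate is bookkeeping: the interaction between taking transposes (which swaps arguments) and differentiating along orbits (which acts only on the second argument) is easy to confuse, and one must keep careful track of which integral corresponds to $\dot v$ versus $\dot v^\tra$, as well as the sign conventions making $v_{x_0}(x_0)=0$.
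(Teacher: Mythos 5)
Your proposal tracks the paper's own proof essentially step for step: the same orbit identity $\omega_x(\phi_t(y))=t+\omega_x(y)$ derived from $\dot\omega=1$; the same change-of-variables computation for $\dot v_y(x)$ (which the paper obtains by citing Proposition \ref{constrSec} rather than redoing it); the same transpose manipulation $\omega^\tra_{\phi_t(y)}(\phi_s(x))=\omega_{\phi_s(x)}(\phi_t(y))$ reducing the difference quotient of $v^\tra$ to $\int_0^{\tu}t\,ds=t\tu$, hence $\dot v^\tra=\tu$ (your passage of the limit under the integral is even unnecessary, since $\dot\omega\equiv 1$ makes the integrand exactly $t$); the same diagonal evaluation $\dot v_x(x)=-\tu$; and the same final appeal to Proposition \ref{secvvest}.

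One caveat, which you share with the paper's proof: in your Step 1 the closed expression is $\omega^\tra_x(\phi_s(x))=\omega_{\phi_s(x)}(x)=-s$ (write $x=\phi_{-s}(\phi_s(x))$ and use that $\dot\omega=1$ differentiates the \emph{second} argument), so $\int_0^{\tu}\omega^\tra_x(\phi_s(x))\,ds=-\tu^2/2$, which does \emph{not} balance the constant $-\tu^2/2$ in the stated definition of $v$ but reinforces it, giving $v_x(x)=-\tu^2\neq 0$. The paper makes the same slip, asserting $\omega_{\phi_s(x)}(x)=\omega_x(x)+s$; note that both you and the paper use the \emph{correct} sign in the other diagonal computation, $\dot v_x(x)=\omega_y(\phi_{-\tu}(y))=-\tu$ with $y=\phi_{\tu}(x)$, which exposes the inconsistency. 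The fix is to take the normalizing constant $+\tu^2/2$ in the definition of $v$; since an additive constant drops out of every difference quotient, your formulas for $\dot v$ and $\dot v^\tra$ are unaffected, with the corrected constant $v$ is a genuine 1-form (so $x\in\ker(v)_x$ and $H_\rho(x)$ actually contains $x$), and Proposition \ref{secvvest} then closes the proof exactly as you describe.
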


\begin{proof}
Since $\dot\omega=1$ we have that $\omega_x(\phi_t(y))=t+\omega_x(y)$. 
Then
\[
  \begin{array}{ll}
    v^\tra_x(\phi_t(y))-v^\tra_x(y)
    &= v_{\phi_t(y)}(x)-v_y(x)\\
    &=\int_0^{\tu}[\omega^\tra_{\phi_t(y)}(\phi_s(x))-\omega^\tra_y(\phi_s(x))]ds\\
    &=\int_0^{\tu}[\omega_{\phi_s(x)}(\phi_t(y))-\omega_{\phi_s(x)}(y)]ds\\
    &=\int_0^{\tu}tds=t\tu.
  \end{array}
\]
Therefore $\dot v^\tra=\tu$.
We have that $v_x(x)=0$ for all $x\in X$ because $\omega_x(x)=0$ and: 
\[
  \begin{array}{ll}
    \int_0^{\tu }w^\tra_x(\phi_s(x)) ds
    &=\int_0^{\tu }w_{\phi_s(x)}(x) ds\\
    &=\int_0^{\tu}[w_x(x) +s]ds\\
    &={\tu}^2/2.
  \end{array}
\]
Notice that 
\[
H^\tra_\rho
  =[B_\rho\cap\ker(v)]^\tra
  =B^\tra_\rho\cap[\ker(v)]^\tra
  =B_\rho\cap\ker(v^\tra)
\]
By Proposition \ref{constrSec}
we have that $\dot v_y(x)=\omega^\tra_y(\phi_{\tu}(x))-\omega^\tra_y(x)$.
Then $$\dot v_x(x)=\omega^\tra_x(\phi_{\tu}(x))=\omega_{\phi_{\tu}(x)}(x).$$ 
Define $y=\phi_{\tu}(x)$. 
Then 
$$\dot v_x(x)= \omega_y(\phi_{-\tu}(y))=-\tu.$$
Now the result follows by Proposition \ref{secvvest}.
\end{proof}

\begin{prop}
\label{omegaPuntoUno}
  Every regular flow admits a 1-form $\omega$ with $\dot\omega=1$.
\end{prop}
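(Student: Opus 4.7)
The plan is to realize $\omega_x(y)$ as the signed flow-time needed to carry $y$ back to the unique cross section $H(x)$ through $x$. Intuitively, if we fix a cross section through each point, then the ``time coordinate'' relative to that section varies at unit speed along the flow by construction.

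First, I will apply Theorem \ref{teoW} to fix a field of cross sections $H\colon X\to\hyper X$ of some time $\tau>0$, and consider the associated field of flow boxes $F(x)=\phi_{[-\tau,\tau]}(H(x))$. By Proposition \ref{cuasicont2}, $F$ is a field of neighborhoods; and by the proposition following Definition \ref{dfFCrossSec}, the map $(t,z)\mapsto\phi_t(z)$ from $[-\tau,\tau]\times H(x)$ into $X$ is injective for every $x\in X$.

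Then, setting $N=F$, I will define $\omega\colon U(N)\to\R$ by declaring $\omega_x(y)$ to be the unique $t\in[-\tau,\tau]$ such that $\phi_{-t}(y)\in H(x)$. Existence follows from $y\in F(x)$, and uniqueness from the injectivity above. Clearly $\omega_x(x)=0$ because $x\in H(x)$. The derivative computation is then immediate: writing $y=\phi_t(z)$ with $z\in H(x)$, for all sufficiently small $s$ we have $\phi_s(y)=\phi_{s+t}(z)$ with $|s+t|\leq\tau$ still, so $\omega_x(\phi_s(y))=s+t=\omega_x(y)+s$, and the difference quotient is identically $1$ in a neighborhood of $0$.

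The real work is continuity of $\omega$. The plan is a standard sequential argument: given $(x_n,y_n)\to(x,y)$ in $U(N)$ with $t_n=\omega_{x_n}(y_n)\in[-\tau,\tau]$, pass to any convergent subsequence $t_n\to t^{*}\in[-\tau,\tau]$. Then $\phi_{-t_n}(y_n)\in H(x_n)$ converges to $\phi_{-t^{*}}(y)$, and semicontinuity of $H$ places this limit in $H(x)$; the uniqueness of the time-coordinate at $(x,y)$ then forces $t^{*}=\omega_x(y)$, and because every convergent subsequence of $(t_n)$ has this same limit, the full sequence satisfies $t_n\to\omega_x(y)$. I expect this continuity step to be the only subtle point, precisely because $H$ is merely semicontinuous, so pinning down the limit of the time-coordinates requires the global injectivity of the flow-box parametrization rather than any stronger regularity of $H$ itself.
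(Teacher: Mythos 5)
Your proposal is correct and follows essentially the same route as the paper: fix a field of cross sections from Theorem \ref{teoW}, define $\omega_x(y)$ on the flow boxes $F(x)=\phi_{[-\tau,\tau]}(H(x))$ as the unique $t\in[-\tau,\tau]$ with $\phi_{-t}(y)\in H(x)$, and deduce $\dot\omega=1$ from the additivity $\omega_x(\phi_s(y))=\omega_x(y)+s$. Your sequential verification of the continuity of $\omega$ (semicontinuity of $H$ plus injectivity of $(t,z)\mapsto\phi_t(z)$ on $[-\tau,\tau]\times H(x)$ to pin down the subsequential limits) is a detail the paper's one-line proof leaves implicit, and your argument for it is sound.
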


\begin{proof}
  By Theorem \ref{teoW} we know that there is a field of cross sections $H'$. 
Let $F=\phi_{[-\tau,\tau]}(H')$ be the associated field of flow boxes. 
Define $\omega_x\colon F(x)\to [-\tau,\tau]$ 
by 
$$
 \phi_{-\omega_x(y)}(y)\in H'(x)
$$
for $y\in F(x)$. Notice that 
\begin{equation}
 \label{ecuTime}
\omega_z(\phi_t(x))=t+\omega_z(x)
\end{equation}
if $|t+\omega_z(x)|\leq\tau$.
Then $\dot\omega=1$.
\end{proof}

\begin{thm}
\label{monoSec}
If $X$ is a compact metric space then every regular flow on $X$
admits a monotonous field of cross sections.
\end{thm}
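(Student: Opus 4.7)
The plan is to simply chain together the propositions developed in this subsection; at this stage of the paper the theorem is essentially a one-line corollary of Proposition \ref{secmonot} combined with Proposition \ref{omegaPuntoUno}. The strategy mirrors the proof of Theorem \ref{teoW}, where a 1-form with nowhere-vanishing derivative was fed into the cross-section construction of Proposition \ref{lemFunD}; here we need additionally that the transpose derivative $\dot v^\tra$ be nonzero, which is arranged by averaging in a symmetric way.

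First I would apply Proposition \ref{omegaPuntoUno} to obtain a 1-form $\omega\colon U(N)\to\R$ on some neighborhood of the diagonal with $\dot\omega=1$. Concretely, $\omega_x(y)$ represents the (small) time needed to push $y$ along $\phi$ back onto a fixed cross section $H'(x)$, which exists by Theorem \ref{teoW}; the identity $\omega_z(\phi_t(x))=t+\omega_z(x)$ established in the proof of Proposition \ref{omegaPuntoUno} is exactly what guarantees $\dot\omega=1$.

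Second, fix $\tauu>0$ small and shrink the domain to a neighborhood $U(N')\subset U(N)$ of the diagonal on which $\phi_s(y)\in N^\tra(x)$ for every $s\in[0,\tauu]$; by continuity of $\phi$ and compactness of $X$ this can be arranged. On $U(N')$ the formula
\[
v_x(y)=\int_0^{\tauu}\omega^\tra_x(\phi_s(y))\,ds-\frac{\tauu^2}{2}
\]
is a well-defined 1-form. Proposition \ref{secmonot} then applies verbatim: it computes $\dot v^\tra=\tauu$ and $\dot v_x(x)=-\tauu$, both nonzero, and concludes that $H_\rho=B_\rho\cap\ker(v)$ is a monotonous field of cross sections for all sufficiently small $\rho>0$.

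The main conceptual point has already been absorbed into Proposition \ref{secmonot}: averaging $\omega^\tra$ along the flow for time $\tauu$ produces an object $v$ whose derivative \emph{and} transpose derivative are simultaneously nonzero, which is exactly the hypothesis of Proposition \ref{secvvest} that forces monotonicity. What remains for the present theorem is therefore only bookkeeping --- verifying that the domains of definition are compatible, which is routine by compactness of $X$.
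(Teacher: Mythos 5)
Your proposal is correct and follows exactly the paper's route: the paper's proof of Theorem \ref{monoSec} is the one-line chaining of Propositions \ref{omegaPuntoUno} and \ref{secmonot} that you describe. Your additional remarks on shrinking the domain $U(N')$ and the averaging mechanism are accurate elaborations of details the paper leaves implicit, not a different argument.
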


\begin{proof}
It follows by Propositions \ref{secmonot} and \ref{omegaPuntoUno}.
\end{proof}

\begin{cor}
\label{secPeano}
If $X$ is a Peano continuum then every regular flow on $X$ admits 
a continuous and monotonous field of cross sections.
\end{cor}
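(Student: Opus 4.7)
The plan is to combine the existence result for monotonous fields of cross sections on arbitrary compact metric spaces with the construction in Proposition \ref{subfcont} that produces continuous subfields, and then invoke the fact that monotonicity is inherited by subfields.

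First I would apply Theorem \ref{monoSec} to the given regular flow $\phi$ on the Peano continuum $X$. This yields a monotonous field of cross sections $H \colon X \to \hyper X$; at this stage $H$ is only guaranteed to be semicontinuous and its values need not be connected.

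Next, I would apply Proposition \ref{subfcont} to $H$. Since $X$ is a Peano continuum, that proposition produces a continuous subfield of cross sections $H' \colon X \to \continua X$ with $H' \subset H$. So $H'$ is continuous and each $H'(x)$ is a connected compact cross section through $x$.

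Finally, I would invoke Remark \ref{subMono}: every subfield of cross sections of a monotonous field of cross sections is monotonous. Since $H'\subset H$ and $H$ is monotonous, $H'$ is monotonous as well. Thus $H'$ is a continuous and monotonous field of cross sections, as required. There is no real obstacle here; the work has already been done in Theorem \ref{monoSec}, Proposition \ref{subfcont}, and Remark \ref{subMono}, and the corollary is just their juxtaposition.
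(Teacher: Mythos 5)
Your proposal is correct and follows exactly the paper's own proof: apply Theorem \ref{monoSec} to get a monotonous field of cross sections $H$, use Proposition \ref{subfcont} to extract a continuous subfield $H'\subset H$ with values in $\continua X$, and conclude monotonicity of $H'$ from Remark \ref{subMono}. Nothing is missing; the argument is the same juxtaposition of those three results.
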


\begin{proof}
 By Theorem \ref{monoSec} we know that there is a monotonous 
 field of cross sections $H$. 
 By Proposition \ref{subfcont} there is a continuous field of cross sections 
 $H'\subset H$ with $H'\colon X\to \continua X$. 
 By Remark \ref{subMono} we have that $H'$ is monotonous.
\end{proof}

\subsection{Symmetric fields of cross sections}
\label{secSFCS}

\begin{df}
  A 1-form $\omega$ is \emph{anti-symmetric} if $\omega^\tra=-\omega$.
\end{df}

\begin{prop}
\label{kerOmega2}
  If $\omega$ is anti-symmetric then $\ker(\omega)$ is a 
  symmetric field of compact sets.
\end{prop}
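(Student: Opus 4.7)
The plan is to reduce the claim directly to Proposition \ref{kerOmega}, which already supplies the identity $\ker(\omega^\tra)=(\ker(\omega))^\tra$. Given that reduction, the only remaining task is to identify $\ker(\omega^\tra)$ with $\ker(\omega)$ under the anti-symmetry hypothesis.

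First I would unfold the definition of $\ker$ to observe that for any 1-form $\eta$ and any scalar $c\neq 0$ we have $\ker(c\eta)=\ker(\eta)$, simply because $c\eta_x(y)=0$ if and only if $\eta_x(y)=0$. Applying this to $c=-1$ gives $\ker(-\omega)=\ker(\omega)$. Using the anti-symmetry assumption $\omega^\tra=-\omega$, this yields
\[
\ker(\omega^\tra)=\ker(-\omega)=\ker(\omega).
\]

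Next I would invoke Proposition \ref{kerOmega} to rewrite the left-hand side as $(\ker(\omega))^\tra$, obtaining $(\ker(\omega))^\tra=\ker(\omega)$, which is exactly the definition of $\ker(\omega)$ being symmetric. I should also remark that $\ker(\omega)$ is a field of compact sets by the general observation that $\ker$ of any 1-form is a field (the defining conditions $x\in\ker(\omega)_x$ and semicontinuity follow from $\omega_x(x)=0$ and the continuity of $\omega$), so the conclusion is a genuine symmetric field and not an empty statement.

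There is no real obstacle here: the proof is essentially a one-line application of Proposition \ref{kerOmega} combined with the trivial identity $\ker(-\omega)=\ker(\omega)$. The only thing one needs to be mildly careful about is making sure the domain $U(N^\tra)$ of $\omega^\tra$ coincides (or is being compared) with $U(N)$ correctly so that writing $\omega^\tra=-\omega$ as an equality of 1-forms makes sense, but this is already built into the definition of anti-symmetry in the statement.
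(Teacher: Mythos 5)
Your proof is correct and follows exactly the route the paper itself indicates: the paper's proof reads ``It is a direct consequence of the definitions. It could also be derived from Proposition \ref{kerOmega},'' and you have simply spelled out that second derivation, adding the (trivial but worth stating) identity $\ker(-\omega)=\ker(\omega)$ so that anti-symmetry gives $(\ker(\omega))^\tra=\ker(\omega^\tra)=\ker(-\omega)=\ker(\omega)$. Your closing remarks on compactness, semicontinuity, and the domain $U(N^\tra)$ are also consistent with the paper's definitions, so nothing is missing.
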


\begin{proof}
  It is a direct consequence of the definitions. 
  It could also be derived from Proposition \ref{kerOmega}.
\end{proof}

\begin{prop}
\label{PropMonSymCross}
  If $\omega$ is anti-symmetric and $\dot\omega\neq 0$ then $H=B_\rho\cap\ker\omega$ 
  is a monotonous and symmetric field of cross sections if $\rho$ is small.
\end{prop}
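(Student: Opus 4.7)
The plan is to identify this statement as a direct consequence of the two main pieces of machinery developed in Sections \ref{secMFCS} and \ref{secSFCS}: Proposition \ref{secvvest} handles the ``monotonous cross section'' part, and Propositions \ref{propsAdj}, \ref{kerOmega}, and \ref{kerOmega2} handle the ``symmetric'' part. So the proof should essentially be a short assembly, with the only substantive verification being that anti-symmetry of $\omega$ transfers to non-vanishing of $\dot\omega^\tra$.

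First, I would apply Proposition \ref{secvvest} with $v := \omega$. That proposition requires two hypotheses: $\dot v\neq 0$, which is given, and $\dot v^\tra\neq 0$, which is the step to check. The computation is short: using $\omega^\tra_x(z)=\omega_z(x)$ and the anti-symmetry $\omega_z(x)=-\omega_x(z)$,
\[
\dot{(\omega^\tra)}_x(y)=\lim_{t\to 0}\frac{\omega^\tra_x(\phi_t(y))-\omega^\tra_x(y)}{t}
=\lim_{t\to 0}\frac{-\omega_x(\phi_t(y))+\omega_x(y)}{t}=-\dot\omega_x(y),
\]
so $\dot\omega^\tra=-\dot\omega\neq 0$ wherever $\dot\omega$ is non-vanishing. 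Proposition \ref{secvvest} then delivers, for all sufficiently small $\rho$, that $H=B_\rho\cap\ker(\omega)$ is a monotonous field of cross sections.

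Second, for symmetry, I would use Proposition \ref{kerOmega2} to get that $\ker(\omega)$ is symmetric, and then combine it with items (3) and (6) of Proposition \ref{propsAdj} to compute
\[
H^\tra=(B_\rho\cap\ker(\omega))^\tra=B_\rho^\tra\cap(\ker(\omega))^\tra=B_\rho\cap\ker(\omega)=H.
\]
This gives symmetry and completes the proof.

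There is essentially no obstacle here: the key conceptual work was done in Propositions \ref{secvvest} and \ref{kerOmega2}, and the only new input is the one-line observation that $\omega^\tra=-\omega$ forces $\dot\omega^\tra=-\dot\omega$, so that Proposition \ref{secvvest} applies. The statement should be viewed as the packaging lemma that connects anti-symmetric 1-forms to monotonous-and-symmetric cross sections, and will be used in the next step to actually construct such cross sections by producing an anti-symmetric $\omega$ with $\dot\omega\neq 0$.
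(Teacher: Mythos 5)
Your proposal is correct and follows essentially the same route as the paper: the paper's proof likewise deduces symmetry of $H$ from Propositions \ref{kerOmega2} and \ref{propsAdj}, observes that anti-symmetry gives $\dot\omega^\tra=-\dot\omega$, and then invokes Proposition \ref{secvvest} for monotonicity. The only difference is cosmetic --- you spell out the one-line limit computation for $\dot\omega^\tra=-\dot\omega$ and the intersection identity $H^\tra=B_\rho\cap\ker(\omega^\tra)$, which the paper simply asserts.
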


\begin{proof}
Since $\omega$ is symmetric we know by Proposition \ref{kerOmega2} that 
$\ker(\omega)$ is a symmetric field of compact sets. 
By Proposition \ref{propsAdj} we conclude that $H$ is a symmetric 
field of compact sets. 
Since $\omega$ is anti-symmetric we have that $\dot\omega^\tra=-\dot\omega$.
Then, we can apply Proposition \ref{secvvest} to conclude that 
$H$ is a monotonous field of cross sections. 
\end{proof}

\begin{prop}
\label{antiSymForm}
Every regular flow admits an anti-symmetric 1-form $\omega$ with $\dot\omega\neq 0$.
\end{prop}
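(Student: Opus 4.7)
The plan is to symmetrize the 1-form $v$ constructed in Proposition~\ref{secmonot} by subtracting its transpose. First apply Proposition~\ref{omegaPuntoUno} to obtain a 1-form $\omega$ with $\dot\omega=1$, and form the associated 1-form $v$ as in Proposition~\ref{secmonot}. From that proposition one already has $\dot v^\tra\equiv\tu$. On the other hand, applying (\ref{ecuTime}) with $z=\phi_{\tu}(x)$, $t=-\tu$ gives $\omega_{\phi_{\tu}(x)}(x)=-\tu$, so using the formula $\dot v_y(x)=\omega^\tra_y(\phi_{\tu}(x))-\omega^\tra_y(x)$ we obtain on the diagonal
\[
\dot v_x(x)=\omega_{\phi_{\tu}(x)}(x)-\omega_x(x)=-\tu.
\]

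Let $N$ be a field of neighborhoods on which $v$ is defined. Then $v^\tra$ is defined on $U(N^\tra)$, and by items (3) and (5) of Proposition~\ref{propsAdj} the intersection $N\cap N^\tra$ is again a field of neighborhoods. Define $\tilde\omega\colon U(N\cap N^\tra)\to\R$ by
\[
\tilde\omega_x(y)=v_x(y)-v^\tra_x(y)=v_x(y)-v_y(x).
\]
This is continuous with $\tilde\omega_x(x)=0$, so it is a 1-form, and it is anti-symmetric since $\tilde\omega^\tra_x(y)=\tilde\omega_y(x)=v_y(x)-v_x(y)=-\tilde\omega_x(y)$.

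Since $\dot{\tilde\omega}=\dot v-\dot v^\tra$ and the explicit formulas from Proposition~\ref{constrSec} show that $\dot v$ is continuous jointly in both arguments while $\dot v^\tra\equiv\tu$, we have that $\dot{\tilde\omega}$ is continuous. On the diagonal,
\[
\dot{\tilde\omega}_x(x)=-\tu-\tu=-2\tu\neq 0.
\]
By continuity of $\dot{\tilde\omega}$ and compactness of $X$, there exists $\rho>0$ with $B_\rho\subset N\cap N^\tra$ and $\dot{\tilde\omega}_x(y)\neq 0$ for all $(x,y)\in U(B_\rho)$. Restricting $\tilde\omega$ to $U(B_\rho)$ gives the desired anti-symmetric 1-form with nonvanishing derivative.

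The main obstacle I expect is not conceptual but bookkeeping: one must track the domains carefully so that $v$, $v^\tra$, and their flow derivatives are all defined simultaneously, and then invoke compactness of $X$ to propagate the nonvanishing of $\dot{\tilde\omega}$ from the diagonal to a uniform neighborhood captured by some $U(B_\rho)$.
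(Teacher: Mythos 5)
Your proposal is correct and is essentially the paper's own proof: the paper likewise takes $v$ from Propositions \ref{omegaPuntoUno} and \ref{secmonot}, defines $\omega=v-v^\tra$, and gets anti-symmetry immediately and $\dot\omega\neq 0$ from the opposite signs of $\dot v$ and $\dot v^\tra$ near the diagonal. Your write-up is in fact more careful than the paper's three-line version about domains ($U(N\cap N^\tra)$, then restriction to $U(B_\rho)$ via compactness) and about signs (the paper asserts $\dot v>0$, $\dot v^\tra<0$, which is Proposition \ref{secmonot}'s computation up to replacing $v$ by $-v$; your $\dot{\tilde\omega}_x(x)=-2\tu$ matches the stated formulas exactly).
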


\begin{proof}
By Propositions \ref{omegaPuntoUno} and \ref{secmonot} 
there is $v$ such that $\dot v>0$ and $\dot v^\tra<0$. 
Define $\omega=v-v^\tra$. 
In this way $\omega^\tra=-\omega$ and $\dot\omega>0$.
\end{proof}

\begin{thm}
\label{main1}
If $X$ is a compact metric space then every regular flow on $X$
admits a symmetric and monotonous field of cross sections.
\end{thm}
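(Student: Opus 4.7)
The plan is to simply assemble the two immediately preceding propositions. By Proposition \ref{antiSymForm}, every regular flow admits an anti-symmetric 1-form $\omega$ with $\dot\omega\neq 0$. Feeding such an $\omega$ into Proposition \ref{PropMonSymCross} yields that $H_\rho=B_\rho\cap\ker(\omega)$ is both symmetric and monotonous (and is a field of cross sections) provided $\rho$ is small enough. So the entire theorem reduces to choosing such a $\rho$.

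Concretely, the steps I would carry out are: (i) invoke Proposition \ref{antiSymForm} to produce $\omega$ with $\omega^\tra=-\omega$ and $\dot\omega$ never vanishing; (ii) choose $\rho>0$ small enough that the conclusion of Proposition \ref{PropMonSymCross} applies (this is where all the earlier analysis from Section \ref{secMFCS} and Section \ref{secSFCS}, in particular the uniform lower bound on $|\dot\omega|$ on $B_\rho$, gets used); (iii) set $H=B_\rho\cap\ker(\omega)$ and read off the two desired properties: symmetry comes from Proposition \ref{kerOmega2} together with the symmetry of the ball field (Proposition \ref{propsAdj} items 3 and 6), and monotonicity comes from Proposition \ref{secvvest} applied with $v=\omega$, which is legitimate because anti-symmetry gives $\dot v^\tra=-\dot v\neq 0$ automatically.

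There is essentially no obstacle remaining at this stage, since the two supporting propositions have already absorbed the work: the hard part was constructing an antisymmetric 1-form with nonvanishing derivative (done in Proposition \ref{antiSymForm} by forming $\omega=v-v^\tra$ out of the ``time coordinate'' 1-form from Proposition \ref{omegaPuntoUno} and its transpose), and verifying that its kernel intersected with a small ball is simultaneously a cross section, symmetric, and monotonous (done in Proposition \ref{PropMonSymCross}). The proof of the theorem itself is thus a one-line citation of these two results.
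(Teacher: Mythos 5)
Your proposal is correct and matches the paper's own proof, which is exactly the one-line combination of Propositions \ref{antiSymForm} and \ref{PropMonSymCross} that you describe. Your extra unpacking of how Proposition \ref{PropMonSymCross} works internally (symmetry via Propositions \ref{kerOmega2} and \ref{propsAdj}, monotonicity via Proposition \ref{secvvest} using $\dot\omega^\tra=-\dot\omega$) accurately reproduces the paper's argument for that proposition as well.
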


\begin{proof}
It follows by Propositions \ref{antiSymForm}
and \ref{PropMonSymCross}.
\end{proof}

\begin{df}
  We say that a field of compact sets $H$ 
  is \emph{locally symmetric} if there is $\delta>0$ such that 
  $B_\delta\cap H$ is symmetric.
\end{df}

\begin{thm}
\label{secContMonSym}
If $X$ is a Peano continuum and $\phi$ is a regular flow on $X$ then 
there are
a symmetric and monotonous field of cross sections $H'$ and
a continuous one-parameter field
$H\colon [0,r]\times X\to \continua X$ 
satisfying:
\begin{enumerate}
\item $H_\epsilon\colon X\to \continua X$ is 
  a monotonous, continuous and locally symmetric 
  field of connected cross sections for all $\epsilon\in (0,r]$, ,
\item $H_\epsilon\subset H'$ for all $\epsilon\in[0,r]$
\item $H_0(x)=\{x\}$ for all $x\in X$,
\item if $0\leq\epsilon\leq\epsilon'\leq r$ then $H_\epsilon\subset H_{\epsilon'}$,
\end{enumerate}
\end{thm}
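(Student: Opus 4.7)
The plan is to take the symmetric and monotonous field of cross sections $H'$ provided by Theorem \ref{main1}, fix a convex metric $\dist^c$ on the Peano continuum $X$ (Remark \ref{rmkPeanoConvex}), and define
\[
  H_\epsilon(x):=\pi_x(B^c_\epsilon(x)),
\]
where $B^c_\epsilon(x)$ is the closed $\dist^c$-ball and $\pi_x$ is the flow projection onto $H'(x)$ defined on the flow box $F(x)=\phi_{[-\tau,\tau]}(H'(x))$. We choose $r>0$ small enough that $B^c_r(x)\subset F(x)$ for all $x\in X$, which is possible since the two metrics generate the same uniformity on compact $X$ and $F$ is a field of neighborhoods (Proposition \ref{cuasicont2}).

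Items (2)--(4) are immediate from the construction: $H_\epsilon\subset H'$ and $H_0(x)=\{x\}$, and monotonicity in $\epsilon$ follows from $B^c_\epsilon\subset B^c_{\epsilon'}$ whenever $\epsilon\leq\epsilon'$. For item (1), fix $\epsilon\in(0,r]$. Since $B^c_\epsilon$ is a continuous field of connected neighborhoods (in a Peano continuum with convex metric the closed balls are connected), Proposition \ref{piN} yields that $H_\epsilon\colon X\to\continua X$ is a continuous field of connected cross sections; monotonicity of $H_\epsilon$ as a flow cross section then follows from $H_\epsilon\subset H'$ via Remark \ref{subMono}. The joint continuity of $H$ on $[0,r]\times X$ is established by running both semicontinuity arguments of Proposition \ref{piN} with the additional parameter $\epsilon_n\to\epsilon$; all that is needed is the joint continuity of $(\epsilon,x)\mapsto B^c_\epsilon(x)$ and the semicontinuity of $H'$.

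The main obstacle is local symmetry of each $H_\epsilon$, and this is where the choice of both $H'$ symmetric and $B^c$ symmetric (i.e.\ $y\in B^c_\epsilon(x)\Leftrightarrow x\in B^c_\epsilon(y)$) pays off. Given $y\in H_\epsilon(x)\cap B_\delta(x)$, we have $y\in H'(x)$, and by symmetry of $H'$ we get $x\in H'(y)$. If $\delta>0$ is chosen small enough that $\dist(x,y)\leq\delta$ implies both $\dist^c(x,y)\leq\epsilon$ and $x\in F(y)$, then the symmetry of the convex ball gives $x\in B^c_\epsilon(y)$, so $\pi_y(x)$ is defined; because $x\in H'(y)$ already, the injectivity of $\phi\colon[-\tau,\tau]\times H'(y)\to X$ forces $\pi_y(x)=x$. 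Therefore $x=\pi_y(x)\in\pi_y(B^c_\epsilon(y))=H_\epsilon(y)$, as required. A uniform $\delta$ valid for all $\epsilon\in(0,r]$ is extracted from the uniform comparability of $\dist$ and $\dist^c$ on the compact space $X$.
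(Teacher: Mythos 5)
Your construction is precisely the paper's: the published proof takes $H'$ from Theorem \ref{main1}, passes to a convex metric whose ball field is continuous (Theorem \ref{contB}), fixes $r$ with $B_r\subset F$, defines $H_\epsilon=\pi(B_\epsilon)$, and declares the verification ``direct'' --- your proposal supplies exactly that verification, and your local-symmetry argument (symmetry of $H'$ plus symmetry of the convex balls, with $\pi_y(x)=x$ forced by the cross-section property) is correct. One caveat: your closing claim of a single $\delta$ valid for all $\epsilon\in(0,r]$ is both unnecessary (the definition of locally symmetric allows $\delta$ to depend on the field, hence on $\epsilon$, and your per-$\epsilon$ choice of $\delta$ with $\dist(x,y)\leq\delta\Rightarrow\dist^c(x,y)\leq\epsilon$ already suffices) and unjustified, since uniform equivalence of two metrics on a compact space yields only $\delta=\delta(\epsilon)$, never one $\delta$ working for every $\epsilon>0$; that sentence should simply be deleted.
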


\begin{proof}
The field of cross sections $H'$ is given by Theorem \ref{main1}.
Let $F$ be the field of flow boxes associated with $H'$ en denote 
by $\pi$ its flow projection. 
Consider a metric in $X$ such that the field of balls $B_\epsilon$ is continuous.
Take $r>0$ such that $B_r\subset F$. 
Define $H_\epsilon=\pi(B_\epsilon)$ for $\epsilon\in [0,r]$.
The result now is direct.
\end{proof}

\section{Expansive flows}
\label{secCWexp}

In this section we will apply our constructions to the study of expansive flows. 
In Section \ref{secSecFlow} we define the sectional flow, which is similar to 
the Poincaré return map (or the linear flow). 
It is also some kind of holonomy. 
The definition of the sectional flow does not depend on the expansivity of the flow.
In Sections \ref{secCwexpFaS} and \ref{secStables} 
we show our results on expansive flows.

\subsection{Sectional flow}
\label{secSecFlow}
Let $(X,\dist)$ be a Peano continuum and denote by $\phi$ a regular 
flow on $X$. 
Consider a continuous, monotonous and locally symmetric 
one-parameter field of connected cross sections 
$H_\epsilon\colon X\to\continua X$ given by Theorem \ref{secContMonSym}.

\begin{lem}
\label{lemUniRep}
 Given $x,y\in X$ and a connected set $I\subset \R$ with $0\in I$ there is at most one 
 continuous function $h\colon I\to \R$ such that $h(0)=0$ and 
 $\phi_{h(t)}(y)\in H(\phi_t(x))$ for all $t\in I$.
 In this case $h$ is strictly increasing.
\end{lem}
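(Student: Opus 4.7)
My plan is to prove uniqueness first and then, assuming existence, strict monotonicity. For uniqueness, let $h_1, h_2 \colon I \to \R$ both satisfy the hypotheses and let $\tau > 0$ be a time for $H$, so that $\phi \colon [-\tau,\tau] \times H(p) \to X$ is injective for every $p \in X$. Consider $S = \{t \in I : h_1(t) = h_2(t)\}$: it contains $0$ and is closed by continuity. To see $S$ is open, fix $t_0 \in S$ and use continuity of $h_1, h_2$ to find a neighborhood of $t_0$ on which $|h_1(t) - h_2(t)| \leq \tau$. On this neighborhood the points $\phi_{h_1(t)}(y)$ and $\phi_{h_2(t)}(y)$ both lie in $H(\phi_t(x))$ and satisfy $\phi_{h_1(t)}(y) = \phi_{h_1(t) - h_2(t)}(\phi_{h_2(t)}(y))$, so the injectivity of $\phi$ on $[-\tau,\tau] \times H(\phi_t(x))$ applied with times $0$ and $h_1(t)-h_2(t)$ forces $h_1(t) = h_2(t)$. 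Thus $S$ is clopen in the connected set $I$, hence $S = I$.

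For strict monotonicity, let $\epsilon > 0$ witness the monotonicity of $H$, so that $H(p) \cap H(\phi_s(p)) = \emptyset$ for every $p \in X$ and $s \in (0, \epsilon)$. I would first show that $h$ is strictly increasing on every subinterval of $I$ of length less than some uniform $\delta \leq \min\{\epsilon, \tau\}$, and then conclude on all of $I$ by chaining along a finite partition. Fix $t_1 < t_2$ in $I$ with $s := t_2 - t_1 \in (0, \delta)$, and write $x' = \phi_{t_1}(x)$, $w_i = \phi_{h(t_i)}(y)$, $a = h(t_2) - h(t_1)$. Shrinking $\delta$ if needed, continuity of $h$ ensures $|a| \leq \tau$. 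Then $w_1 \in H(x')$, $w_2 \in H(\phi_s(x'))$, and $\phi_{-a}(w_2) = w_1 \in H(x')$, so the flow projection of $w_2$ onto $H(x')$ is well defined with projection time $-a$. By monotonicity this projection time does not vanish on $H(\phi_s(x'))$, which is connected by Theorem~\ref{secContMonSym}; at the distinguished point $\phi_s(x') \in H(\phi_s(x'))$ the projection time equals $-s < 0$. Continuity and connectedness then force the projection time to be negative throughout $H(\phi_s(x'))$, so $-a < 0$, i.e.\ $a > 0$.

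The main technical obstacle is securing uniform smallness of the various parameters: for the flow projection argument one needs $s$, $|a|$ and the diameter of $H(\phi_s(x'))$ to be small simultaneously and uniformly in $t_1$, so that $w_2$ lies in the flow box of $H$ at $x'$ with a well-defined projection time in $[-\tau, \tau]$. This uniformity is provided by compactness of $X$ together with the continuous dependence built into Theorem~\ref{secContMonSym}, and by Proposition~\ref{cuasicont2} which ensures that flow boxes remain fields of neighborhoods. The connectedness of each fiber $H(\phi_t(x))$ is indispensable here: without it, the projection time could have opposite signs on distinct components of $H(\phi_s(x'))$, and one could no longer transfer the sign from $\phi_s(x')$ to $w_2$.
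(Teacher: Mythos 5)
Your uniqueness argument is essentially the paper's: the same clopen set $\{t\in I: h_1(t)=h_2(t)\}$ inside the connected $I$, with openness coming from the cross-section property; the paper runs that step by contradiction with sequences $s_n\to s$, while you run it directly via injectivity of $\phi$ on $[-\tau,\tau]\times H(\phi_t(x))$, but the content is identical. For the increasing claim the comparison is lopsided: the paper disposes of it in one sentence (``$h$ is increasing because $H$ is monotonous''), so your flow-projection argument supplies detail the paper omits, and its core idea is right --- the projection time is nonvanishing on the connected set $H(\phi_s(x'))$ by monotonicity, has a definite sign at the distinguished point $\phi_s(x')$, and connectedness transfers that sign to $w_2$. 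You are also correct that connectedness is legitimately available here, since Section \ref{secSecFlow} fixes the field produced by Theorem \ref{secContMonSym}.

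Two caveats. First, the one step that is shaky as written: you need \emph{all} of $H(\phi_s(x'))$ to lie in the flow box $\phi_{[-\tau,\tau]}(H(x'))$ so that the projection time is defined and continuous on it, and you attribute this to Proposition \ref{cuasicont2} plus continuity and compactness. But a flow box is a neighborhood only of its central orbit: Lemma \ref{cuasicont} controls $H(y)\cap B_{\gamma'}(y)$, not $H(y)$ itself, precisely because of the ``discontinuity points'' near the edge of a section that the paper flags; a point of $H(\phi_s(x'))$ close to the boundary of the section may fail to flow into $H(x')$ in time $[-\tau,\tau]$, its orbit meeting only a slightly larger section. The repair stays inside your toolkit: by item 2 of Theorem \ref{secContMonSym} one has $H=H_\epsilon\subset H'$ with $H'$ a monotonous field of cross sections, so take the time function to be the projection onto the ambient section $H'(x')$; for $\epsilon$ below the maximal parameter and $s$ small this is defined and continuous on $H_\epsilon(\phi_s(x'))$, it vanishes nowhere because $H'(x')\cap H'(\phi_s(x'))=\emptyset$ and $H_\epsilon(\phi_s(x'))\subset H'(\phi_s(x'))$, and it detects the sign of $a$ just as in your argument. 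Second, the uniformity you labor over is unnecessary: if $h(t_1)=h(t_2)$ with $0<t_2-t_1<\epsilon$ then $\phi_{h(t_1)}(y)\in H(\phi_{t_1}(x))\cap H(\phi_{t_2}(x))=\emptyset$, so $h$ is locally injective, and a continuous locally injective function on an interval is strictly monotone; hence the sign need only be computed once, say at $t_1=0$ with $s$ as small as you please, which also removes any worry about uniform continuity of $h$ when $I$ is unbounded.
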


\begin{proof}
Suppose that $h_1,h_2\colon I\to \R$ are continuous, $h_1(0)=h_2(0)=0$ 
and for all $t\in I$ it holds that
$\phi_{h_i(t)}(y)\in H(\phi_t(x))$, for $i=1,2$.
Define $J=\{t\in I:h_1(t)=h_2(t)\}$. 
We have that $J$ is closed because $h_1,h_2$ are continuous. 
If $J$ is not open then there are 
$s\in J$ and $s_n\to s$ with $s_n\notin J$ for 
all $n\geq 1$.
Then $\phi_{h_i(s_n)}\in H(\phi_{s_n}(x))$ for $i=1,2$. 
This contradicts that $H(\phi_{s_n}(x))$ is a cross section. 
Then $J$ is closed and open, since $I$ is connected, $I=J$ and 
$h_1=h_2$.

We have that $h$ is increasing because $H$ is monotonous.
\end{proof}

In case such a function $h\colon I\to\R$ exists for $x,y$ define 
the \emph{sectional flow}
$$\Phi_t(x,y)=(\phi_t(x),\phi_{h(t)}(y))$$ for $t\in I$.
Define $\reparam^+$ (resp. $\reparam^-$) as the set of 
all homeomorphisms of $[0,+\infty)$ (resp. $(-\infty,0]$).
Define $W^s_\epsilon,W^u_\epsilon\colon X\to\hyper X$ as
\[
\begin{array}{l}
W^s_\epsilon(x)=\{y\in X :\exists h\in\reparam^+
\hbox{ such that } \phi_{h(t)}(y)\in H_\epsilon(\phi_t(x))
\forall t\geq 0\},\\
W^u_\epsilon(x)=\{y\in X :\exists h\in\reparam^-
\hbox{ such that } \phi_{h(t)}(y)\in H_\epsilon(\phi_t(x)) 
\forall t\geq 0\}.
\end{array}
\]

\begin{lem}
\label{hConvUnif}
If $h_n\colon I\to \R$ are continuous, $h_n(0)=0$, $x_n\to x$, $y_n\to y$ satisfy 
$\phi_{h_n(t)}(y_n)\in H(\phi_t(x_n))$ for all $n\geq 1$ and for all 
$t\in I$ then $h_n$ uniformly converges to some $h\colon I\to\R$ satisfying
$\phi_{h(t)}(y)\in H(\phi_t(x))$ for all $t\in I$.
\end{lem}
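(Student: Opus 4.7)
The plan is to use Arzelà--Ascoli on compact subintervals of $I$ and then invoke the uniqueness part of Lemma \ref{lemUniRep} to upgrade subsequential convergence to convergence of the full sequence.

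Fix a compact subinterval $J\subset I$ containing $0$. The heart of the argument is a \emph{local Lipschitz} claim: for each $s_0\in J$ there exist $\delta>0$, $N_0\in\N$ and $C>0$ with
\[
  |h_n(t)-h_n(s_0)|\le C|t-s_0|\qquad\text{whenever }n\ge N_0\text{ and }|t-s_0|<\delta.
\]
To prove it, work along a subsequence with $h_n(s_0)\to r^*\in\R$. The point $q_n(s_0):=\phi_{h_n(s_0)}(y_n)\in H(\phi_{s_0}(x_n))$ converges to $q^*:=\phi_{r^*}(y)\in H(\phi_{s_0}(x))$ by semicontinuity of $H$. The flow box $F(q^*)=\phi_{[-\tau,\tau]}(H(q^*))$ is a neighborhood of $q^*$, so continuity of $H$ and uniform continuity of $\phi$ imply that for $t$ close to $s_0$ and $n$ large, the orbit of $q_n(s_0)$ meets $H(\phi_t(x_n))$ at a time $r_n(t)$ with $|r_n(t)|\le C|t-s_0|$. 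Monotonicity of $H$ from Theorem \ref{secContMonSym} makes this crossing unique, so Lemma \ref{lemUniRep} forces $h_n(t)-h_n(s_0)=r_n(t)$.

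With the local Lipschitz claim in hand, a connectedness argument starting from $s_0=0$ (where $h_n(0)=0$) propagates uniform boundedness of $\{h_n(s)\}_n$ along $J$: the subset of such $s$ is nonempty, open, and closed in $J$. Combined with equicontinuity, Arzelà--Ascoli extracts a subsequence $h_{n_k}\to h$ uniformly on $J$, with $h$ continuous and increasing. Passing to the limit in $\phi_{h_{n_k}(t)}(y_{n_k})\in H(\phi_t(x_{n_k}))$ via semicontinuity of $H$ gives $\phi_{h(t)}(y)\in H(\phi_t(x))$ for all $t\in J$. Lemma \ref{lemUniRep} then identifies $h$ uniquely, so every subsequential limit coincides and the whole sequence converges uniformly on $J$. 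Exhausting $I$ by compact subintervals and a diagonal argument yield the limit on all of $I$.

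The main obstacle is the local Lipschitz claim. The subtlety is that $q_n(s_0)$ may lie far from its base point $\phi_{s_0}(x_n)$ on the cross section, so Lemma \ref{cuasicont} applied at $\phi_{s_0}(x_n)$ does not directly control the crossing time. The remedy is to relocalize at $q_n(s_0)$, using the flow box $F(q_n(s_0))$ and the continuity of $H$ to show that $H(\phi_t(x_n))$ pierces this flow box transversely and near $q_n(s_0)$ for $t$ near $s_0$; monotonicity from Theorem \ref{secContMonSym} ensures that the resulting crossing is unique.
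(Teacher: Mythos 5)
There is a genuine gap, and it sits exactly where you placed all the weight: the ``local Lipschitz claim.'' First, the claim is stronger than anything the hypotheses can give: for a flow that is merely continuous, crossing times through a field of cross sections are continuous in the parameter but in no way Lipschitz, and neither the continuity of $H$ nor the uniform continuity of $\phi$ --- which is all you invoke --- can produce a constant $C$ with $|h_n(t)-h_n(s_0)|\le C|t-s_0|$ (think of suspensions with merely continuous roof functions: the time-lag functions inherit an arbitrary modulus of continuity). What is true, and all that Arzel\`a--Ascoli needs, is equicontinuity. But second, and more seriously, your sketch does not prove even that: the existence of a crossing time $r_n(t)$ is automatic, since by hypothesis the orbit of $q_n(s_0)$ meets $H(\phi_t(x_n))$ at time $h_n(t)-h_n(s_0)$; the entire issue is the \emph{smallness} of that time, i.e., ruling out that the orbit only returns to the nearby section after a long excursion, and ``continuity of $H$ and uniform continuity of $\phi$'' is not an argument for this. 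The standard argument is a compactness contradiction: if $|h_n(t_n)-h_n(s_0)|\ge\epsilon_0$ with $t_n\to s_0$, use the intermediate value theorem (possible since $h_n$ is continuous) to normalize the discrepancy to a fixed $s$ with $0<|s|<\tau$, pass to limits, and obtain two distinct points $q^*$ and $\phi_s(q^*)$ of the single cross section $H(\phi_{s_0}(x))$ on one orbit within time $\tau$, contradicting the definition of cross section. This is precisely the paper's entire proof, run globally rather than locally: the paper never touches Lipschitz bounds, boundedness propagation, or Arzel\`a--Ascoli; it negates uniform Cauchyness, normalizes $h_{n_k}(t_k)-h_{m_k}(t_k)$ to a constant $s\in(-\tau,\tau)\setminus\{0\}$ via the IVT (using $h_n(0)=0$), takes limits of the crossing points, and contradicts the section property. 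So the one step you dispatch in a sentence is the whole content of the lemma; your scaffolding (Arzel\`a--Ascoli plus the uniqueness of Lemma \ref{lemUniRep} to upgrade subsequential to full convergence) is sound but is not where the work is. Note also a circularity hazard in ``Lemma \ref{lemUniRep} forces $h_n(t)-h_n(s_0)=r_n(t)$'': to invoke that uniqueness you need $t\mapsto r_n(t)$ to be a continuous function on a connected interval vanishing at $s_0$ (e.g., built from a flow-box projection time), which must be constructed, not asserted pointwise --- a priori $h_n(t)-h_n(s_0)$ could be a later crossing.

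A second, smaller defect is the endgame: exhausting $I$ by compact subintervals and diagonalizing yields uniform convergence on compact subsets only, which for unbounded $I$ is strictly weaker than the uniform convergence on $I$ asserted in the statement. (This is a delicate point: the paper's own limit-taking is only straightforward when the times $t_k$ stay in a compact set, and only locally uniform convergence is used afterwards, in the semicontinuity of $W^s_\epsilon$; but as a proof of the statement as written, your last sentence does not close the argument.)
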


\begin{proof}
Suppose that $H$ has a time $\tau>0$ and 
that $h_n$ does not uniformly converge. 
Then there are $\epsilon>0$, $n_k,m_k\to+\infty$, $t_n\in I$ such that 
$|h_{n_k}(t_k)-h_{m_k}(t_k)|\geq \epsilon$ for all $k\geq 1$. 
Since $h_n(0)=h_m(0)=0$ for all $m,n\geq 1$ we can assume that 
$h_{n_k}(t_k)-h_{m_k}(t_k)=s\in (-\tau,\tau)$, $s\neq 0$, for all $k\geq 1$. 
Then $\phi_{h_{n_k}(t_k)}(y_{n_k})\in H(\phi_{t_k}(x_{n_k}))$ and
$\phi_{h_{m_k}(t_k)}(y_{m_k})\in H(\phi_{t_k}(x_{m_k}))$ for all $k\geq 1$. 
Assume that $\phi_{t_k}(x_{n_k}),\phi_{t_k}(x_{m_k})\to p$ 
and $\phi_{h_{n_k}(t_k)}(y_{n_k})\to q$. 
Then $\phi_{h_{m_k}(t_k)}(y_{m_k})\to \phi_s(q)$. 
We have a contradiction because $q$ and $\phi_s(q)$ are in $H(p)$ and $0<|s|<\tau$.
\end{proof}

\begin{prop}
  $W^s_\epsilon$ and $W^u_\epsilon$ are symmetric fields of compact sets.
\end{prop}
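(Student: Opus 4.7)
The plan is to verify each defining property of a symmetric field of compact sets for $W^s_\epsilon$; the argument for $W^u_\epsilon$ is analogous after reversing time.

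For reflexivity, the identity reparametrization $h(t)=t$ lies in $\reparam^+$ and trivially gives $\phi_{h(t)}(x)=\phi_t(x)\in H_\epsilon(\phi_t(x))$, so $x\in W^s_\epsilon(x)$.

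For the symmetry $(W^s_\epsilon)^\tra=W^s_\epsilon$, suppose $y\in W^s_\epsilon(x)$ with witness $h\in\reparam^+$. I would take $g=h^{-1}\in\reparam^+$ and substitute $s=h(t)$: the desired condition $\phi_{g(s)}(x)\in H_\epsilon(\phi_s(y))$ becomes $\phi_t(x)\in H_\epsilon(\phi_{h(t)}(y))$, which is the reversal of the given containment $\phi_{h(t)}(y)\in H_\epsilon(\phi_t(x))$. To flip it, I invoke the local symmetry of $H_\epsilon$ provided by Theorem \ref{secContMonSym}(1); this applies because, by shrinking $r$ in that theorem so that $H_\epsilon(z)\subset B_\delta(z)$ uniformly in $z$ (possible since $H_0(z)=\{z\}$ and $H$ is continuous in $\epsilon$), the two relevant points lie within the symmetry scale $\delta$. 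The reverse inclusion follows from $(\cdot)^{\tra\tra}=\cdot$ (Proposition \ref{propsAdj}).

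For semicontinuity, let $x_n\to x$ and $y_n\in W^s_\epsilon(x_n)$ with $y_n\to y$, with witnesses $h_n\in\reparam^+$. Applying Lemma \ref{hConvUnif} on each compact interval $[0,T]$ together with a diagonal extraction produces a continuous $h\colon [0,+\infty)\to\R$ with $h(0)=0$ and $\phi_{h(t)}(y)\in H_\epsilon(\phi_t(x))$ for all $t\geq 0$, and Lemma \ref{lemUniRep} forces $h$ to be strictly increasing. The main obstacle is to show that $h\in\reparam^+$, equivalently that $h(t)\to+\infty$: were $h(t)\to M<\infty$, the containment in the small cross sections would force $\phi_t(x)\to\phi_M(y)$, and I would derive a contradiction by exploiting the surjectivity of each $h_n$---for any $T>M$ one finds $t_n$ with $h_n(t_n)=T$, and extracting a convergent subsequence of $(t_n,\phi_{t_n}(x_n))$ produces a value of $h$ exceeding $M$. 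This technical verification is the only non-formal point; reflexivity and symmetry are direct.
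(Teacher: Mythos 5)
Your route is the same as the paper's: reflexivity via $h(t)=t$, symmetry via $g=h^{-1}$ together with the local symmetry of $H_\epsilon$, and semicontinuity via Lemmas \ref{hConvUnif} and \ref{lemUniRep}. Your explicit attention to the symmetry scale --- restricting the parameter so that $H_\epsilon(z)\subset B_\delta(z)$ uniformly, which is possible by continuity of the one-parameter field, $H_0(z)=\{z\}$ and compactness of $X$ --- is a legitimate fleshing-out of the paper's one-line remark ``we have used that $H_\epsilon$ is locally symmetric'', and it is a point worth making precise, since the flip $w\in H_\epsilon(z)\Rightarrow z\in H_\epsilon(w)$ is only available when $\dist(w,z)$ is below the symmetry scale.

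There is, however, a genuine gap in your surjectivity argument for the limit reparametrization $h$. First, $h(t)\to M<\infty$ does \emph{not} force $\phi_t(x)\to\phi_M(y)$: the containment $\phi_{h(t)}(y)\in H_\epsilon(\phi_t(x))$ only bounds $\dist(\phi_t(x),\phi_M(y))$ by (roughly) the diameter of the sections, which does not tend to $0$ as $t\to\infty$. Second, and more seriously, the extraction step fails: if $T>M$ and $h_n(t_n)=T$, then on any compact $[0,S]$ the uniform convergence $h_n\to h$ and $\sup_{[0,S]}h<M$ force $t_n>S$ for all large $n$, so necessarily $t_n\to+\infty$; no convergent subsequence of $(t_n)$ exists, and uniform convergence on compacts tells you nothing about $h_n(t_n)$, so the promised ``value of $h$ exceeding $M$'' never materializes. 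The missing ingredient is the \emph{monotonicity} of the field: if $h(t)\to M<\infty$, set $w=\phi_M(y)$ and pick $t_n\to+\infty$ with $\phi_{t_n}(x)\to q\in \wlim(x)$; since $h(t_n)\to M$, semicontinuity of $H_\epsilon$ gives $w\in H_\epsilon(q)$, and running the same limit along the times $t_n+s$ gives $w\in H_\epsilon(\phi_s(q))$ for every small $s>0$ (note $\phi_s(q)\in \wlim(x)$ by invariance). This contradicts $H_\epsilon(q)\cap H_\epsilon(\phi_s(q))=\emptyset$ from the monotonicity of $H_\epsilon$. To be fair, the paper's own proof of semicontinuity is just the citation of the two lemmas and silently skips this same point, so your instinct that surjectivity of $h$ is the only non-formal step was correct; your proposed patch simply does not close it, while monotonicity does.
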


\begin{proof}
(Semicontinuity). It follows by Lemmas \ref{lemUniRep} and \ref{hConvUnif}.

(Symmetry). If $y\in W^s_\epsilon(x)$ then 
there is $h\in\reparam^+$ such that 
$\phi_{h(t)}(y)\in H_\epsilon(\phi_t(x))$ for all $t\geq 0$. 
We have that $h^{-1}\in\reparam^+$ and 
$\phi_{h^{-1}(t)}(x)\in H_\epsilon(\phi_t(y))$ for all $t\geq 0$. 
We have used that $H_\epsilon$ is locally symmetric. This proves that $W^s_\epsilon$ is symmetric. 
The proof for $W^u_\epsilon$ is similar. 
\end{proof}

\subsection{Expansive flows}
\label{secCwexpFaS}

Let $\phi\colon\R\times X\to X$ be a regular flow 
on a compact metric space $X$.
Recall that $\phi$ is an \emph{expansive flow} if 
for all 
$\epsilon>0$ there is $\delta>0$ such that if 
$\dist(\phi_{h(t)}(y),\phi_t(x))<\delta$ for all $t\in\R$ with 
$h\colon\R\to\R$ an increasing homeomorphism such that 
$h(0)=0$ then there is $t\in(-\epsilon,\epsilon)$ such that $y=\phi_t(x)$.
In this case $\delta$ is called \emph{expansive constant}.
\begin{prop} 
\label{propExpEquiv}
The following statements are equivalent:
\begin{enumerate}
\item $\phi$ is expansive,
\item there is $\delta>0$ such that 
$W^s_\delta(x)\cap W^u_\delta(x)=\{x\}$ for all $x\in X$.
\end{enumerate}
\end{prop}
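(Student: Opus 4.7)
The plan is to prove both implications by exploiting the flow-box structure of $H_\epsilon$ together with the uniqueness and monotonicity of reparameterizations from Lemma \ref{lemUniRep}.

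For $(1)\Rightarrow(2)$, I would fix $\epsilon_0\in(0,\tau)$, where $\tau>0$ is a time for $H_\delta$, and let $\delta_0$ be the corresponding expansive constant. Since $H$ is a continuous one-parameter field with $H_0(x)=\{x\}$, I can shrink $\delta$ so that $\diam H_\delta(x)\leq\delta_0$ uniformly in $x$. For $y\in W^s_\delta(x)\cap W^u_\delta(x)$, the two witnessing reparameterizations (which are strictly increasing by Lemma \ref{lemUniRep} and fix $0$) glue into an increasing homeomorphism $h\colon\R\to\R$ with $h(0)=0$ satisfying $\phi_{h(t)}(y)\in H_\delta(\phi_t(x))$ for every $t$; in particular $\dist(\phi_{h(t)}(y),\phi_t(x))\leq\delta_0$. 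Expansivity then yields $y=\phi_s(x)$ with $|s|<\epsilon_0<\tau$, and since $h(0)=0$ forces $y\in H_\delta(x)$, the cross section property $H_\delta(x)\cap\phi_{[-\tau,\tau]}(x)=\{x\}$ makes $s=0$, so $y=x$.

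For $(2)\Rightarrow(1)$, given $\epsilon>0$, I would take $\delta>0$ from (2) and a time $\tau<\epsilon$ for $H_\delta$. Proposition \ref{cuasicont2} supplies $\delta'>0$ such that every point within $\delta'$ of a base point $p$ is the time-$\sigma$ image of a unique element of $H_\delta(p)$, with $|\sigma|$ small and depending continuously on the pair. Suppose $x,y$ and an increasing homeomorphism $h\colon\R\to\R$ with $h(0)=0$ satisfy $\dist(\phi_{h(t)}(y),\phi_t(x))<\delta'$ for all $t$. I would define a continuous correction $\sigma\colon\R\to(-\tau,\tau)$ by pushing $\phi_{h(t)}(y)$ onto $H_\delta(\phi_t(x))$, set $\tilde h=h+\sigma$, replace $y$ by $z=\phi_{\sigma(0)}(y)$, and use the shifted reparameterization $h'=\tilde h-\sigma(0)$. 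Restricting $h'$ to $[0,\infty)$ and to $(-\infty,0]$ shows $z\in W^s_\delta(x)\cap W^u_\delta(x)=\{x\}$; hence $y=\phi_{-\sigma(0)}(x)$ with $|\sigma(0)|<\tau<\epsilon$, as needed.

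The main obstacle will be checking that $\tilde h$ is a homeomorphism of $\R$: strict monotonicity comes from Lemma \ref{lemUniRep} via the monotonous property of $H_\delta$, while surjectivity uses that the correction $\sigma$ is uniformly bounded while the given $h(t)\to\pm\infty$. A secondary technical point is the basepoint mismatch $\tilde h(0)=\sigma(0)\neq 0$, which is precisely what forces the passage from $y$ to $z$; the leftover shift $-\sigma(0)$ is then what realizes the "$t\in(-\epsilon,\epsilon)$" in the definition of expansivity.
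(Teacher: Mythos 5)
Your proposal is correct and follows essentially the same route as the paper's proof: for $(1)\Rightarrow(2)$ you glue the stable and unstable reparameterizations into an increasing homeomorphism of $\R$ and combine expansivity with the cross-section property $H_\delta(x)\cap\phi_{[-\tau,\tau]}(x)=\{x\}$, and for $(2)\Rightarrow(1)$ you project the shadowing orbit onto the sections $H_\delta(\phi_t(x))$ using Proposition \ref{cuasicont2}, exactly as the paper does with the flow projection $\pi_{\phi_t(x)}$. The only difference is that you make explicit what the paper leaves implicit (the uniform diameter bound on $H_\delta$, the basepoint shift $z=\phi_{\sigma(0)}(y)$, and the monotonicity and surjectivity of the corrected reparameterization), which is a refinement rather than a different argument.
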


\begin{proof}
(1 $\to$ 2). Consider that $H_\radio(x)$ is a local cross section of time $\tau$ for all $x\in X$. 
For $\epsilon\in(0,\tau)$ consider an expansive constant $\delta$ from the definition. 
Suppose 
that $y\in W^s_\delta(x)\cap W^u_\delta(x)$. 
So, there is $h\colon\R\to\R$ such that 
$\phi_{h(t)}(y)\in H_\delta(\phi_t(x))$ for all $t\in\R$.
Then $\dist(\phi_{h(t)}(y),\phi_t(x))<\delta$ for all $t\in\R$. 
The expansiveness of the flow implies that $y=\phi_t(x)$ for some $t\in(-\epsilon,\epsilon)$. 
Therefore we have that $y=x$ because $y\in H_\delta(x)$.

(2 $\to$ 1). Consider $\epsilon\in (0,\tau)$ given. 
Take $\delta>0$ such that for all $x\in X$ we have 
$B_\delta(x)\subset \phi_{[-\epsilon,\epsilon]}(H_\alpha(x))$. 
Suppose that $\dist(\phi_{g(t)}(y),\phi_t(x))<\delta$ for all $t\in\R$ and some $g\colon\R\to\R$. 
For $x\in X$ consider the flow projection 
$\pi_x\colon \phi_{[-\epsilon,\epsilon]}(H_\alpha(x))\to H_\alpha(x)$.
We have that 
$$\pi_{\phi_t(x)}(\phi_{g(t)}(y))\in H_\alpha(\phi_t(x))$$
for all $t\in\R$. 
If $y'=\pi_x(y)$ then we conclude $y'=x$, so, there is $t\in(-\epsilon,\epsilon)$ such that 
$y=\phi_t(x)$.
\end{proof}

Let us recall that a flow is \emph{positive expansive} 
if for all $\epsilon>0$ there is $\delta>0$ such that if 
$\dist(\phi_{h(t)}(y),\phi_t(x))<\delta$ for all $t\geq 0$ with 
$h\in\reparam^+$ then there is 
$t\in(-\epsilon,\epsilon)$ such that $y=\phi_t(x)$.

\begin{prop} 
\label{propPosExp}
For a regular flow $\phi$ on a compact metric space the following 
statements are equivalent:
\begin{enumerate}
\item \label{posexpit1} $\phi$ is positive expansive,
\item \label{posexpit2} there is $\delta>0$ such that 
$W^s_\delta(x)=\{x\}$ for all $x\in X$,
\item \label{posexpit3} $X$ is a finite union of circles.
\end{enumerate}
\end{prop}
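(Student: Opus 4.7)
Plan. I would handle the equivalence through three implications: $(1)\Leftrightarrow(2)$, $(3)\Rightarrow(2)$, and $(2)\Rightarrow(3)$, the first two essentially formal and the last one carrying the main content.

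For $(1)\Leftrightarrow(2)$ I would adapt the proof of Proposition~\ref{propExpEquiv} using only forward times. Using that by continuity of $H$ one has $H_\delta(z)\subset B_{\delta'}(z)$ with $\delta'=\delta'(\delta)\to 0$ as $\delta\to 0$, a witness $h\in\reparam^+$ for $y\in W^s_\delta(x)$ gives $\dist(\phi_{h(t)}(y),\phi_t(x))<\delta'$ on $[0,\infty)$; positive expansiveness then forces $y=\phi_s(x)$ for some $|s|<\epsilon<\tau$, and $y\in H_\delta(x)$ forces $s=0$, i.e.\ $y=x$. Conversely, choosing $\delta$ so that $B_\delta\subset\phi_{[-\epsilon,\epsilon]}(H_\alpha)$ and given a trajectory $\phi_{g(t)}(y)$ staying $\delta$-close to $\phi_t(x)$ for all $t\ge 0$, the flow projection $\pi_{\phi_t(x)}$ converts $g$ into a continuous reparameterization $h\in\reparam^+$ witnessing $\pi_x(y)\in W^s_\alpha(x)=\{x\}$, so $y=\phi_s(x)$ for small $|s|$. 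The implication $(3)\Rightarrow(2)$ is a dimension count: if $X$ is a finite disjoint union of circles, then each neighborhood of $x$ is one-dimensional, while $\phi\colon[-\tau,\tau]\times H_\epsilon(x)\to X$ is a homeomorphism onto a neighborhood of $x$; connectedness of $H_\epsilon(x)$ from Theorem~\ref{secContMonSym} forces $H_\epsilon(x)=\{x\}$, so $W^s_\delta(x)\subset H_\delta(x)=\{x\}$.

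The implication $(2)\Rightarrow(3)$ proceeds in two steps. \emph{Step~A: every minimal set $M\subset X$ is a periodic orbit.} Fix $x_0\in M$ and take the connected cross section $\Sigma=H_\epsilon(x_0)$. Minimality makes the first return map $R\colon\Sigma\cap M\to\Sigma\cap M$ a well-defined homeomorphism, and $R^n(y)$ is precisely the $n$-th intersection of the sectional trajectory from $(x_0,y)$ with $\Sigma$. Thus the hypothesis $W^s_\delta(x_0)=\{x_0\}$ translates into positive expansiveness of $R$, and the classical theorem that a positively expansive homeomorphism of a compact metric space has finite phase space makes $\Sigma\cap M$ finite, so $M$ is a finite union of periodic orbits, hence a single one by minimality. \emph{Step~B: every point is periodic.} Given $x\in X$, pick a periodic orbit $\gamma\subset\omega(x)$ and $t_n\to\infty$ with $\phi_{t_n}(x)\to p\in\gamma$. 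Using a uniform compactness-contradiction form of $(2)$ (for every $\eta>0$ there is $N$ such that $\phi_{h(t)}(y)\in H_\delta(\phi_t(p))$ on $[0,N]$ implies $y\in B_\eta(p)$), together with Lemma~\ref{hConvUnif} to patch the increasingly long partial sectional reparameterizations into a single one as $n\to\infty$, I would deduce $\phi_{t_n}(x)=p$ for $n$ large, hence $x\in\gamma$. With every orbit periodic, each $H_\epsilon(x)$ collapses to $\{x\}$, so $X$ is a compact $1$-manifold without boundary, and compactness forces finitely many circles.

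The main obstacle is Step~B: the sectional trajectory from $(p,\phi_{t_n}(x))$ must exit $H_\delta$ at a finite time $S_n$ whenever $\phi_{t_n}(x)\ne p$, so establishing $\phi_{t_n}(x)\in W^s_\delta(p)$ directly is impossible. The delicate point is the compactness extraction forcing the diverging exit times $S_n\to\infty$ to assemble into a genuine global reparameterization, thereby collapsing the approximating sequence $\phi_{t_n}(x)$ onto $\gamma$.
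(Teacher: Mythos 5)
Your $(1)\Leftrightarrow(2)$ and $(3)\Rightarrow(2)$ are fine and consistent with the paper, but note that the paper never re-proves the hard equivalence: its entire proof is that $(1)\Leftrightarrow(2)$ is ``analogous to Proposition~\ref{propExpEquiv}'' and that $(1)\Leftrightarrow(3)$ is cited to \cite{Ar}. So the substance of your proposal is the self-contained attempt at $(2)\Rightarrow(3)$, and there it has genuine gaps. In Step~A, the first return map $R$ on $\Sigma\cap M$ is not known to be continuous: returns exist by minimality and return times are bounded below by the cross-section property, but orbits can graze the boundary of the compact section $\Sigma=H_\epsilon(x_0)$, making the first return time, and hence $R$, discontinuous --- this is precisely the difficulty with return maps that the paper's introduction flags for the flow-box approach of \cite{BW} and \cite{KS}. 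Without continuity of $R$ on the compact set $\Sigma\cap M$ you cannot invoke the classical finiteness theorem for positively expansive homeomorphisms; and even granting continuity, translating hypothesis~(2) into positive expansivity of $R$ requires upgrading closeness of discrete returns to a \emph{fixed} section into a global reparametrization through the \emph{moving} sections $H_\delta(\phi_t(y))$, a chaining argument you only gesture at.

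Step~B is where the argument actually fails, as you half-admit. The uniform statement you extract from~(2) by compactness (tube-following on $[0,N]$ forces $y\in B_\eta(p)$) can only convert long finite-time shadowing into initial proximity, and you already know $\phi_{t_n}(x)\to p$; it cannot upgrade convergence to equality. Carrying out your compactness extraction honestly: at the exit times $S_n\to\infty$ pass to a limit of $\Phi_{S_n}(p,\phi_{t_n}(x))$, patching reparametrizations by Lemma~\ref{hConvUnif}; what you obtain is a point $q\neq p'$ with $q\in W^u_\delta(p')$ for some $p'\in\gamma$, i.e.\ a nontrivial \emph{unstable} set along the periodic orbit. Hypothesis~(2) controls only stable sets, so this is not a contradiction, and nothing collapses onto $\gamma$. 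The same hole reopens in your last sentence: even if every orbit were periodic, $H_\epsilon(x)=\{x\}$ does not follow from periodicity alone (rotate an annulus foliated by circles: all orbits periodic, all sections nondegenerate arcs); one must again use~(2), through exactly the kind of argument Step~B failed to supply. Ruling out nontrivial local sections --- equivalently, nontrivial unstable continua --- from triviality of stable sets is the real content of the equivalence with~(3); it is the theorem of \cite{Ar}, which the paper deliberately cites rather than reproves, and your sketch does not yet replace it.
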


\begin{proof}
The proof of the equivalence of \ref{posexpit1} and \ref{posexpit2} is analogous to the proof of Proposition 
\ref{propExpEquiv}. 
The equivalence of \ref{posexpit1} and \ref{posexpit3} is shown in \cite{Ar}.
\end{proof}

\subsection{Stable points}
\label{secStables}

The following definitions should be understood as \emph{Lyapunov stability 
of trajectories allowing time lags}.

\begin{df}
 A point $x\in X$ is \emph{stable} if for all $\epsilon>0$ there is $\delta>0$ 
 such that if $H_\delta(x)\subset W^s_\epsilon(x)$. 
 We say that $x\in X$ is \emph{asymptotically stable} 
 if it is stable and there is $\delta>0$ such that for all $y\in H_\delta(x)$ 
 there is $h\in\reparam^+$ such that $\phi_{h(t)}(y)\in H_\epsilon(\phi_t(x))$ 
 for all $t\geq 0$ and $\dist(\phi_{h(t)}(y),\phi_t(x))\to 0$ as $t\to+\infty$.
\end{df}

In the sequel we will say that $\delta>0$ is an \emph{expansive constant} if 
$W^s_\delta(x)\cap W^u_\delta(x)=\{x\}$ for all $x\in X$.

\begin{prop}
\label{estImpAsint}
  If $\phi$ is expansive with expansive constant $\delta$ and $x\in X$ then 
  $$\lim_{t\to +\infty}\diam(\Phi_t(x,W^s_\delta(x)))\to 0.$$
\end{prop}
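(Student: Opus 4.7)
The plan is to argue by contradiction. If the conclusion fails, then there exist $\epsilon>0$, times $t_n\to+\infty$, and points $y_n,z_n\in W^s_\delta(x)$ whose unique reparametrizations $h_{y_n},h_{z_n}\in\reparam^+$ (provided by Lemma~\ref{lemUniRep}) satisfy $\dist(\phi_{h_{y_n}(t_n)}(y_n),\phi_{h_{z_n}(t_n)}(z_n))\geq\epsilon$. Writing $p_n=\phi_{t_n}(x)$, $u_n=\phi_{h_{y_n}(t_n)}(y_n)$ and $v_n=\phi_{h_{z_n}(t_n)}(z_n)$, the shifted time changes $g_{y_n}(\sigma)=h_{y_n}(\sigma+t_n)-h_{y_n}(t_n)$ and the analogous $g_{z_n}$ are continuous, strictly increasing, vanish at $0$, and satisfy $\phi_{g_{y_n}(\sigma)}(u_n)\in H_\delta(\phi_\sigma(p_n))$ for $\sigma\in[-t_n,+\infty)$; in particular $u_n,v_n\in H_\delta(p_n)$.

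Next I would pass to a subsequence so that $p_n\to p$, $u_n\to u$, $v_n\to v$ in the compact space $X$; semicontinuity of $H_\delta$ gives $u,v\in H_\delta(p)$, while $\dist(u,v)\geq\epsilon$ is preserved. Applying Lemma~\ref{hConvUnif} on each compact interval $[-T,T]$ (which lies in $[-t_n,t_n]$ once $n$ is large) and using the uniqueness clause of Lemma~\ref{lemUniRep} to glue the local limits, I obtain continuous strictly increasing maps $g_y,g_z\colon\R\to\R$ with $g_y(0)=g_z(0)=0$ and
\[
\phi_{g_y(\sigma)}(u)\in H_\delta(\phi_\sigma(p)),\quad \phi_{g_z(\sigma)}(v)\in H_\delta(\phi_\sigma(p))\qquad\text{for every }\sigma\in\R.
\]

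The crux is upgrading this to $u,v\in W^s_\delta(p)\cap W^u_\delta(p)$, because then the hypothesis that $\delta$ is an expansive constant forces $u=v=p$, contradicting $\dist(u,v)\geq\epsilon$. For this one needs $g_y$ (and $g_z$) to be a homeomorphism of $\R$. Suppose instead that $g_y(\sigma)\to L<+\infty$ as $\sigma\to+\infty$, and pick $s\in(0,\eta)$, where $\eta>0$ is the monotonicity constant of $H_\delta$ provided by Theorem~\ref{secContMonSym}. Choose $\sigma_n\to+\infty$ with $\phi_{\sigma_n}(p)\to p^\ast$; since $g_y$ is monotone and bounded above by $L$, both $g_y(\sigma_n)\to L$ and $g_y(\sigma_n+s)\to L$, so semicontinuity of $H_\delta$ places $\phi_L(u)\in H_\delta(p^\ast)\cap H_\delta(\phi_s(p^\ast))$, which is empty by monotonicity. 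A symmetric argument shows $g_y(\sigma)\to -\infty$ as $\sigma\to -\infty$, and the same treatment applies to $g_z$.

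The main obstacle I expect is precisely this surjectivity of the limit reparametrizations: Lemma~\ref{hConvUnif} alone realizes $g_y$ as a homeomorphism onto an interval, and it is the \emph{monotonous} (not merely cross-sectional) nature of $H_\delta$ that is needed to force this interval to be all of $\R$ and thereby activate the expansivity hypothesis.
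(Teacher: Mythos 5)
Your proof is correct and follows essentially the same route as the paper's: assume the diameters do not shrink, push forward along times $t_n\to+\infty$, pass to a limit, and place the limit points in $W^s_\delta(p)\cap W^u_\delta(p)$, which the expansive constant forces to equal $\{p\}$, contradicting $\dist(u,v)\geq\epsilon$. The only difference is one of rigor: the paper's one-paragraph proof simply asserts that the Hausdorff limit $Y$ satisfies $Y\subset W^s_\delta(y)\cap W^u_\delta(y)$, whereas you correctly identify and close the implicit gap there, namely the surjectivity of the limit reparametrizations, which you derive from the monotonicity of $H_\delta$ together with semicontinuity.
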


\begin{proof}
By contradiction assume that 
there are $\gamma\in (0,\delta]$ and $t_n\to+\infty$ such that 
$\diam(\Phi_{t_n}(x,W^s_\delta(x)))\geq \gamma$. 
Denote by $x_n=\phi_{t_n}(x)$ and suppose that $x_n\to y$.
We can also assume that $\Phi_{t_n}(x,W^s_\delta(x))$ converges (in the Hausdorff metric) to 
a compact set $Y$ contained in $H_\delta(y)$.
In this way we have that $\diam(Y)\geq\gamma>0$ and 
$Y\subset W^s_\delta(y)\cap W^u_\delta(y)$. 
This contradicts the expansivity of $\phi$ and finishes the proof.
\end{proof}

A similar result holds for $W^u_\delta(x)$.

\begin{lem}
  \label{lemKato}
  If $\phi$ is expansive then for all $\epsilon>0$ there is $\delta>0$ such that if 
  $C\subset H(x)$ is a continuum such that 
  $x\in C$, $\diam(C)\leq\delta$ and $\diam(\Phi_s(x,C))\leq\delta$
  for some $s>0$ 
  then $\diam(\Phi_t(x,C))\leq\epsilon$ for all $t\in[0,s]$.
\end{lem}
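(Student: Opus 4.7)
My plan is to argue by contradiction in the style of Kato's classical stability lemma for expansive homeomorphisms. Fix an expansivity constant $\delta_0>0$ from Proposition~\ref{propExpEquiv} and any $\epsilon\in(0,\delta_0)$ small enough that $H$ is symmetric on balls of radius $\epsilon$. Suppose the conclusion fails for this $\epsilon$; then there exist sequences $\delta_n\downarrow 0$, points $x_n\in X$, times $s_n>0$, continua $C_n\subset H(x_n)$ containing $x_n$ with $\diam(C_n)\leq\delta_n$ and $\diam(\Phi_{s_n}(x_n,C_n))\leq\delta_n$, and intermediate times $t_n\in(0,s_n)$ with $\diam(D_n)>\epsilon$, where $D_n:=\Phi_{t_n}(x_n,C_n)\subset H(\phi_{t_n}(x_n))$. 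After extracting subsequences I may assume $p_n:=\phi_{t_n}(x_n)\to p$ and $D_n\to D$ in the Hausdorff metric; by semicontinuity of $H$, the limit $D$ is a continuum contained in $H(p)$, contains $p$, and has $\diam(D)\geq\epsilon$. The goal is to show $D\subset W^s_\epsilon(p)\cap W^u_\epsilon(p)$, contradicting Proposition~\ref{propExpEquiv}.

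I split into cases according to the behaviour of $t_n$ and $s_n-t_n$. In the case $t_n\to+\infty$ and $s_n-t_n\to+\infty$, fix $y\in D$ with approximants $y_n\in D_n$, $y_n\to y$. Running the sectional flow backwards on $[-t_n,0]$ from $(p_n,y_n)$ ends at $(x_n,z_n)$ with $z_n\in C_n$, while running it forward on $[0,s_n-t_n]$ from $(p_n,y_n)$ lands inside $\Phi_{s_n}(x_n,C_n)$. Both ``target'' continua have diameters tending to $0$. Applying Lemma~\ref{hConvUnif} on each fixed compact time window together with a diagonal extraction, and invoking the uniqueness of Lemma~\ref{lemUniRep}, I extract limiting reparametrizations $h^-\colon(-\infty,0]\to\R$ and $h^+\colon[0,+\infty)\to\R$ with $h^\pm(0)=0$ and $\phi_{h^\pm(t)}(y)\in H_\epsilon(\phi_t(p))$ on their respective half lines. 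This places $y$ simultaneously in $W^u_\epsilon(p)$ and $W^s_\epsilon(p)$; since $y$ was arbitrary and $\diam(D)>0$, this contradicts expansivity.

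If instead $t_n\to t_\infty<\infty$ along a subsequence, then after a further extraction $x_n\to q$ with $\phi_{t_\infty}(q)=p$, and $C_n\to\{q\}$ in the Hausdorff metric because $\diam(C_n)\to 0$ and $x_n\in C_n$. For each $z_n\in C_n$ the sectional reparametrization $h_n^{z_n}$ is well defined on $[0,t_n]$, and Lemma~\ref{hConvUnif} applied on the compact interval $[0,t_\infty]$ forces $h_n^{z_n}$ to converge \emph{uniformly in $z_n\in C_n$} to the identity (the limit reparametrization for $(q,q)$). Hence $\phi_{h_n^{z_n}(t_n)}(z_n)\to p$ uniformly in $z_n$, so $D_n\to\{p\}$, contradicting $\diam(D)\geq\epsilon$. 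The case of bounded $s_n-t_n$ is symmetric, run backward from $\Phi_{s_n}(x_n,C_n)$.

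The step I expect to be the most delicate is checking that the sectional flow $\Phi_t(x_n,y)$ is actually defined for \emph{every} $y\in C_n$ and \emph{every} $t\in[0,s_n]$, so that the reparametrization $h_n^y$ exists globally and the limit arguments above make sense. This must be bootstrapped from the fact that $C_n=\Phi_0(x_n,C_n)$ and $\Phi_{s_n}(x_n,C_n)$ are both small connected subsets of the appropriate cross sections: by the monotonicity and local symmetry of $H$ guaranteed by Theorem~\ref{secContMonSym}, a connectedness argument in $t\in[0,s_n]$ (using that the set of $t$ where the reparametrization extends is open and closed) rules out any point of $C_n$ ``falling off'' the cross section $H(\phi_t(x_n))$ at some intermediate time. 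Without this uniform structural control supplied by the one-parameter field of Theorem~\ref{secContMonSym}, the very definition of $D_n$ would be in doubt.
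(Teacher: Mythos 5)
Your proposal is correct and takes essentially the same route as the paper: the paper's proof of Lemma \ref{lemKato} is precisely this compactness argument --- extract a Hausdorff-limit continuum of $\Phi_{t_n}(x_n,C_n)$ of diameter at least $\epsilon$ and observe, as in Proposition \ref{estImpAsint}, that it lies in $W^s\cap W^u$ of the limit point, contradicting expansivity --- and your case analysis ruling out bounded $t_n$ or bounded $s_n-t_n$, together with the diagonal extraction via Lemmas \ref{lemUniRep} and \ref{hConvUnif}, merely makes explicit what the paper compresses into a single sentence. One remark: the ``delicate step'' you flag at the end needs no bootstrapping inside the lemma, since the hypothesis $\diam(\Phi_s(x,C))\leq\delta$ already presupposes that the reparametrization of each point of $C$ exists at time $s$, and by definition its domain is a connected interval containing $0$ and $s$, hence all of $[0,s]$.
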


\begin{proof}
  By contradiction assume that there are $\epsilon>0$, 
  sequences $x_n\in C_n$, $0\leq t_n\leq s_n$ 
  with $C_n$ a continuum in $H(x_n)$, 
  $\diam(C_n)\to 0$, $\diam(\Phi_{s_n}(x_n,C_n))\to 0$ 
  and $\diam(\Phi_{t_n}(x_n,C_n))\geq\epsilon$. 
  As in the previous proof, a limit 
  continuum of $\Phi_{t_n}(x_n,C_n)$ contradicts expansivity of $\phi$.
\end{proof}

Define the inverse flow $\phi^{-1}_t(x)=\phi_{-t}(x)$ and the sets:

\[
\begin{array}{l}
 \alim(x)=\{y\in X: \exists t_n\to-\infty\hbox{ such that }\phi_{t_n}(x)\to y\}\\
 \wlim(x)=\{y\in X: \exists t_n\to+\infty\hbox{ such that }\phi_{t_n}(x)\to y\}
\end{array}
\]
They are usually called $\alpha$-\emph{limit} and $\omega$-\emph{limit sets} respectively.

\begin{prop}
\label{propNoEst}
 If $X$ is a Peano continuum and 
 $\phi$ is an expansive flow on $X$ with a stable point 
 for $\phi$ or $\phi^{-1}$
 then $X$ is a circle.
\end{prop}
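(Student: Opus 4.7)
The plan is to exploit the stable point to force the local unstable set through it to be trivial, and then to globalize that property so that Proposition~\ref{propPosExp} applies to $\phi^{-1}$.

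Without loss of generality assume that $x$ is stable for $\phi$; the case of $\phi^{-1}$ is obtained by reversing time. Fix an expansive constant $\delta>0$ from Proposition~\ref{propExpEquiv}. The key observation is that evaluating the defining condition of $W^u_\epsilon(x)$ at $t=0$ (recall that any $h\in\reparam^-$ satisfies $h(0)=0$) gives the pointwise inclusion $W^u_\epsilon(x)\subset H_\epsilon(x)$, and symmetrically $W^s_\epsilon(x)\subset H_\epsilon(x)$. By stability of $x$ there is $\delta_1\in(0,\delta]$ with $H_{\delta_1}(x)\subset W^s_\delta(x)$; monotonicity of $H_\epsilon$ in $\epsilon$ (item~4 of Theorem~\ref{secContMonSym}) gives $W^u_{\delta_1}(x)\subset W^u_\delta(x)$, so
\[
W^u_{\delta_1}(x)\subset H_{\delta_1}(x)\cap W^u_\delta(x)\subset W^s_\delta(x)\cap W^u_\delta(x)=\{x\}
\]
by Proposition~\ref{propExpEquiv}. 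Thus $W^u_{\delta_1}(x)=\{x\}$.

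Next, I would promote this triviality to a uniform statement on all of $X$. First, stability is preserved along the orbit of $x$ (it is a sectional condition and the field of cross sections is continuous), so the computation above yields $W^u_{\delta_1(z)}(z)=\{z\}$ for every $z\in\orb(x)$. By Proposition~\ref{estImpAsint}, $x$ is asymptotically stable: the sectional diameter $\diam(\Phi_t(x,H_{\delta_1}(x)))$ tends to zero, so the basin $A$ of $\overline{\orb(x)}$ is a non-empty open $\phi$-invariant set. Using the uniform asymptotic tracking together with expansivity, one argues that $A$ is also closed in $X$, so by connectedness of the Peano continuum $X$ we get $A=X$. Combined with the upper semi-continuity of the field $W^u_\epsilon$ and compactness of $X$, the orbit-wise triviality together with the universal attraction produces a uniform $\delta_0>0$ with $W^u_{\delta_0}(y)=\{y\}$ for every $y\in X$.

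Finally, since by definition $W^s$ for $\phi^{-1}$ coincides with $W^u$ for $\phi$, the uniform triviality of $W^u_{\delta_0}$ says exactly that $\phi^{-1}$ is positive expansive (Proposition~\ref{propPosExp} item (2)). Therefore $X$ is a finite disjoint union of circles by Proposition~\ref{propPosExp} item (3), and since $X$ is a connected Peano continuum, $X$ is a single circle.

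The main obstacle will be the globalization step: going from the pointwise identity $W^u_{\delta_1}(x)=\{x\}$ to a uniform $\delta_0>0$ valid everywhere. The upper semi-continuity of $W^u_\epsilon$ is the wrong direction for this conclusion by itself, so the argument has to combine the attracting behaviour of $\orb(x)$ (from Proposition~\ref{estImpAsint}) with expansivity to rule out a degenerate sequence $z_n\in W^u_{1/n}(y_n)\setminus\{y_n\}$ producing, in the limit, a non-trivial continuum that would contradict expansivity through Lemma~\ref{lemKato}.
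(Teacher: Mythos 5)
Your first paragraph is sound, and in fact reproduces at the single point $x$ the intersection computation the paper relies on: stability plus expansivity collapses the unstable set through a stable point. The fatal problem is the globalization step, which you correctly flag as ``the main obstacle'' but never close, and which cannot be closed along the lines you sketch. The closedness of the basin $A$ of $\overline{\orb(x)}$ is asserted, not proved: basins of attraction are open but in general not closed, and here ``$A$ is closed, hence $A=X$'' is essentially equivalent to the conclusion of the proposition, so invoking it is circular. Note also that at this stage $\overline{\orb(x)}$ is not yet known to be a periodic orbit, so the attractor structure you lean on is not available. Even granting $A=X$, forward attraction to $\overline{\orb(x)}$ gives no control over \emph{backward}-time behaviour at an arbitrary $y\in X$, which is what triviality of $W^u_{\delta_0}(y)$ requires; upper semicontinuity of $W^u_\epsilon$ goes the wrong way, as you note, and your proposed repair via Lemma \ref{lemKato} does not apply: that lemma needs \emph{continua} in cross sections with diameter controlled at two times, whereas a sequence $z_n\in W^u_{1/n}(y_n)\setminus\{y_n\}$ provides only pairs of points, with no continuum in sight (manufacturing such continua is precisely the content of Proposition \ref{propNonTrivalWs}, which pushes in the opposite direction).

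The paper's proof avoids any uniform statement over $X$. Using Lemma \ref{lemKato} it propagates stability (say for $\phi^{-1}$) from $x$ to \emph{every} point of the compact, connected, invariant set $\wlim(x)$; on $\wlim(x)$ every point is then stable, so by the same computation you performed at $x$ the restricted flow has trivial stable sets, i.e.\ $\phi$ restricted to $\wlim(x)$ is positive expansive, and Proposition \ref{propPosExp} --- applied to the compact metric space $\wlim(x)$, not to $X$ --- makes $\wlim(x)$ a circle. Stability of $x$ then forces $x\in\wlim(x)$, so every stable point is periodic; since the set of stable points is open and $X$ is connected, $X$ is a single closed orbit, hence a circle. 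The lesson is that one only needs positive expansivity on a limit set where stability genuinely holds at every point, not uniform triviality of $W^u$ on all of $X$; your proposal as written has a genuine gap at exactly the step where it would need the theorem's conclusion as input.
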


\begin{proof}
Assume that $x\in X$ is a stable point of $\phi^{-1}$.
Take $\epsilon>0$. 
Since $x$ is a stable point we know that there is $\delta>0$ 
such that $H_\delta(x)\subset W^u_\epsilon(x)$.
By Lemma \ref{lemKato} we have that there is $\sigma>0$ such that if
$0\leq s\leq t$ then
\[
 \Phi_{-s}(\phi_t(x),H_\sigma(\phi_t(x)))\subset H_\delta(\phi_{t-s}(x)).
\]
This implies that every point in 
$\wlim(x)$ is stable for $\phi^{-1}$. 
Then, $\phi$ restricted to $\wlim(x)$ is a positive expansive flow. 
Since $\wlim(x)$ is a connected set, by Theorem \ref{propPosExp}, 
we have that $\wlim(x)$ is a circle.
Since $x$ is a stable point we have that $x\in \wlim(x)$.
We have proved that every stable point is periodic. 
Since the set of stable points is open we conclude that $X$ is a periodic orbit 
and consequently, $X$ is a circle.
\end{proof}

In this context, a Peano continuum is \emph{trivial} if it is a circle.
Recall that $X$ cannot be a singleton because we are assuming that 
it admits a flow without singular points.

\begin{prop}
\label{propNonTrivalWs}
If $\phi$ is expansive with expansive constant $\epsilon$ and $X$ is a non-trivial Peano continuum then 
there is $\delta>0$ such that for all $x\in X$ there is a continuum $C\subset W^s_\epsilon(x)$ such that $x\in C$ 
and $\diam(C)\geq \delta$.  
\end{prop}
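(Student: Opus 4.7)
The plan is to argue by contradiction. Suppose no such uniform $\delta$ exists. Then for every $n$ there is $x_n\in X$ such that the connected component $C(x_n)$ of $x_n$ in $W^s_\epsilon(x_n)$ satisfies $\diam(C(x_n))<1/n$ (note $W^s_\epsilon(x_n)\subset H_\epsilon(x_n)$, so we are working inside the continuous, monotonous and locally symmetric field of connected cross sections of Theorem \ref{secContMonSym}). Passing to a subsequence, assume $x_n\to x_*$.

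The first step is a Hausdorff-limit of intermediate-diameter continua. Fix a small $\gamma>0$. By local connectedness of $X$ and continuity of $H_\epsilon$, the component of $x_n$ in $H_\epsilon(x_n)\cap B_\gamma(x_n)$ is a continuum $D_n\ni x_n$ of diameter on the order of $\gamma$ for all large $n$. Since eventually $\gamma>\diam(C(x_n))$, the continuum $D_n$ is not contained in $W^s_\epsilon(x_n)$, so the nested family
\[
J_n(t)=\text{connected component of }x_n\text{ in }\bigl\{y\in D_n:\Phi_s(x_n,y)\in H_\epsilon(\phi_s(x_n))\text{ for all }0\le s\le t\bigr\}
\]
is a decreasing family of continua whose diameter starts near $\gamma$ at $t=0$ and descends to $\diam(C(x_n))<1/n$ as $t\to\infty$. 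Let $T_n>0$ be the first time at which $\diam(J_n(T_n))=\gamma/2$ (an intermediate value), and set $E_n=\Phi_{T_n}(x_n,J_n(T_n))\subset H_\epsilon(\phi_{T_n}(x_n))$. Because the diameter must traverse a long interval between $\gamma$ and $o(1)$, one checks $T_n\to\infty$.

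Now split on the behavior of $\diam(E_n)$. In Case A, $\diam(E_n)\to 0$. Choose $\eta\in(0,\epsilon)$ small, and take $\gamma/2$ smaller than the $\delta$ of Lemma \ref{lemKato} for this $\eta$. Then Lemma \ref{lemKato} gives $\diam(\Phi_t(x_n,J_n(T_n)))\le\eta$ for every $t\in[0,T_n]$. Taking a Hausdorff limit of the continua $J_n(T_n)$ (using semicontinuity of $H_\epsilon$ and the continuity of flow projections from Proposition \ref{piN}), one produces a continuum $D_*\ni x_*$ in $H_\epsilon(x_*)$ of diameter at least $\gamma/2$ on which $\Phi_t(x_*,\cdot)$ is defined and of diameter $\le\eta$ for all $t\ge 0$; hence $D_*\subset W^s_\epsilon(x_*)$ is a non-trivial stable continuum at $x_*$. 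In Case B, $\diam(E_n)$ is bounded below; then $z_*:=\lim\phi_{T_n}(x_n)$ admits a non-trivial Hausdorff-limit continuum $E_*\subset H_\epsilon(z_*)$. Because each $y\in E_*$ is the limit of $y_n\in E_n$ whose backward sectional history sits inside $D_n$ (of diameter $\gamma$) over the time interval $[-T_n,0]$, and $T_n\to\infty$, one concludes $E_*\subset W^u_\epsilon(z_*)$.

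The hard step is converting either of these limit objects into a contradiction with the original failure at the sequence $x_n$. In Case A, one must transfer the stable continuum through $x_*$ back to every sufficiently close $x_n$; this is done by applying the flow projection $\pi_{x_n}\colon F(x_*)\cap F(x_n)\to H_\epsilon(x_n)$ to $D_*$ and using Lemma \ref{lemKato} together with the local symmetry of $H_\epsilon$ (Theorem \ref{secContMonSym}) to certify that the projected continuum still stays of $\Phi_t$-diameter at most $\epsilon$ for all $t\ge 0$, providing a continuum of diameter comparable to $\gamma/2$ inside $W^s_\epsilon(x_n)$ and contradicting $\diam(C(x_n))<1/n$. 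In Case B, the non-trivial unstable continuum at $z_*$ exhibited for the flow $\phi$ means that $z_*$ is a stable point for $\phi^{-1}$, which by Proposition \ref{propNoEst} forces $X$ to be a circle, contradicting the hypothesis that $X$ is non-trivial. The main obstacle is precisely this transfer in Case A: stable sets are only upper semicontinuous, so moving a stable continuum from $x_*$ back to the approximating $x_n$ without losing a definite diameter requires combining the monotonicity, continuity and local symmetry of $H_\epsilon$ with a careful application of Lemma \ref{lemKato}.
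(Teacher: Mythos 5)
Your Case B inference is wrong, and it is not a repairable detail: a single non-trivial continuum $E_*\subset W^u_\epsilon(z_*)$ does not make $z_*$ a stable point for $\phi^{-1}$. Stability requires a full cross-sectional neighborhood, i.e.\ $H_\delta(z_*)\subset W^u_\epsilon(z_*)$ for some $\delta>0$, not one continuum through $z_*$. In fact the very proposition you are proving, applied to $\phi^{-1}$ (which is also expansive), asserts that such unstable continua pass through \emph{every} point of a non-trivial $X$, while Proposition \ref{propNoEst} says no point is stable; if your inference were valid these two results would be mutually contradictory and no non-trivial Peano continuum could carry an expansive flow. So Case B produces precisely the kind of object the proposition wants --- but anchored at the limit point $z_*$ rather than at the points $x_n$ where you assumed failure --- and yields no contradiction at all.

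The Case A transfer, which you correctly identify as the main obstacle, is also a genuine gap that your proposed fix does not close. Points of $D_*$ sectionally track the forward orbit of $x_*$; after projecting by $\pi_{x_n}$ you would need them to track the forward orbit of $x_n$, and nothing supplies that unless $x_n$ is itself stably related to $x_*$ --- exactly what is unavailable, since by expansivity the forward orbits of $x_n$ and $x_*$ will in general separate. Lemma \ref{lemKato} cannot bridge this: it needs control of $\diam(\Phi_s(x_n,\cdot))$ at some later time $s$, which is the unknown; and local symmetry of $H_\epsilon$ only relates $y\in H(x)$ to $x\in H(y)$ at a single time, not along orbits. Note moreover that even granting $D_*\subset W^s_\epsilon(x_*)$, this is no contradiction by itself, because every point carries such a continuum. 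The paper sidesteps the entire contradiction framework with a direct construction at an arbitrary $x$: pick $z\in\wlim(x)$ and $x_k=\phi_{t_k}(x)\to z$; by Proposition \ref{propNoEst} the point $z$ is not stable for $\phi^{-1}$, so there is $T>0$ with $\diam(\Phi_{-T}(x_k,H_\delta(x_k)))>\epsilon$; choose by an intermediate-value argument continua $C_k\ni x_k$ in $H_\delta(x_k)$ with $\diam(\Phi_{-t}(x_k,C_k))\le\epsilon$ on $[0,t_k]$ and $=\epsilon$ at some $s_k$; Lemma \ref{lemKato} then keeps $\diam(\Phi_{-t}(x_k,C_k))\ge\delta$ for all $t\ge s_k$, where $\delta$ depends only on $\epsilon$; since $x\in\Phi_{-t_k}(x_k,C_k)$, a Hausdorff-limit continuum lands at the given $x$ itself, inside $W^s_\epsilon(x)$, with diameter at least $\delta$. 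That is the step neither of your cases can reach: the limit object must be anchored at the arbitrary point $x$, and the uniformity of $\delta$ must come from Lemma \ref{lemKato}, not from a compactness argument over a hypothetical sequence of counterexamples.
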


\begin{proof}
For the expansive constant $\epsilon$ consider 
$\delta$ from Lemma \ref{lemKato}. 
For $x\in X$ consider $z\in\wlim(x)$ and
take $t_k\to\infty$ such that 
$x_k=\phi_{t_k}(x)\to z$. 
We can assume that
$z\in H_\delta(x_k)$ for all 
$k\geq 1$. 
By Proposition \ref{propNoEst} we know that $z$ is not 
stable for $\phi^{-1}$. 
Then, there is $T>0$ such that 
$\diam(\Phi_{-T}(x_k,H_\delta(x_k)))>\epsilon.$
Suppose that $t_k>T$ for all $k\geq 1$. 
Consider $C_k\subset H_\delta(x_k)$ such that $C_k$ is a continuum, 
$x_k\in C_k$, $\diam(\Phi_{-t}(x_k,C_k))\leq\epsilon$ for all 
$t\in[0,t_k]$
and $\diam(\Phi_{-s_k}(x_k,C_k))=\epsilon$ for some $s_k\in[0,t_k]$.
By Lemma \ref{lemKato} we know that 
$\diam(\Phi_{-t}(x_k,C_k))\geq\delta$ for 
all $t\geq s_k$. 
Notice that $x\in \Phi_{-t_k}(x_k,C_k)$ for all $k\geq 1$. 
Then a limit continuum $C$ of the sequence 
$\Phi_{-t_k}(x_k,C_k)$ satisfies the thesis of the proposition.
\end{proof}

I learned the argument of the following
proof from J. Lewowicz in the setting of expansive homeomorphisms.

\begin{thm}
\label{teoKawa}
If $X$ is a Peano continuum admitting an expansive flow then 
no open subset of $X$ is homeomorphic with $\R^2$.
\end{thm}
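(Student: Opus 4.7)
The plan is to argue by contradiction: suppose $U\subset X$ is open with $U\cong\R^2$, and fix $x\in U$.

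\emph{Step 1 (the cross section is a topological arc).} For $\epsilon$ small, the flow-box homeomorphism
\[
  [-\tau,\tau]\times H_\epsilon(x)\longrightarrow \phi_{[-\tau,\tau]}(H_\epsilon(x))\subset U\cong\R^2
\]
coming from Definition \ref{dfFCrossSec}, with $H_\epsilon$ the continuous field supplied by Theorem \ref{secContMonSym}, forces $H_\epsilon(x)$ to embed in $\R$. Being compact and connected, $H_\epsilon(x)$ is therefore a closed arc containing $x$.

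\emph{Step 2 (opposite disposition of $W^s$ and $W^u$).} Applying Proposition \ref{propNonTrivalWs} to $\phi$ and, symmetrically, to the reverse flow $\phi^{-1}$, I obtain a uniform $\delta>0$ and continua $C^s\subset W^s_\epsilon(x)\cap H_\epsilon(x)$ and $C^u\subset W^u_\epsilon(x)\cap H_\epsilon(x)$, both containing $x$ and of diameter at least $\delta$. Proposition \ref{propExpEquiv} gives $C^s\cap C^u=\{x\}$, so within the arc $H_\epsilon(x)$ the subcontinua $C^s$ and $C^u$ must lie on opposite sides of $x$.

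\emph{Step 3 (contradiction via recurrence).} Take $z\in\wlim(x)$ and $t_n\to+\infty$ with $\phi_{t_n}(x)\to z$. Proposition \ref{estImpAsint} implies $\diam\Phi_{t_n}(x,C^s)\to 0$, so the Hausdorff limit of $\Phi_{t_n}(x,C^s)$ collapses to $\{z\}$. On the other hand, Proposition \ref{propNonTrivalWs} applied at $z$ gives a non-trivial continuum $C^u(z)\subset W^u_\epsilon(z)\cap H_\epsilon(z)$ of diameter $\geq\delta$. Using the semicontinuity of $W^u_\epsilon$ together with Lemma \ref{lemKato} to transport the datum $C^u$ along the shadowing orbit to the cross section at $z$, I would extract a Hausdorff-limit subcontinuum of $\Phi_{t_n}(x,C^u)$ lying in $H_\epsilon(z)$ that meets both $W^s_\epsilon(z)$ and $W^u_\epsilon(z)$ in more than a point, contradicting Proposition \ref{propExpEquiv}.

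The main obstacle is precisely that transport in Step 3: the sectional flow $\Phi_t(x,\cdot)$ is not a priori defined on all of $C^u$ for large $t>0$, since the continuum $C^u$ is controlled only for $t\leq 0$. The cleanest workaround I see is to recast Step 3 as a Poincar\'e return map argument: planarity plus recurrence produce a self-homeomorphism of the arc $H_\epsilon(z)$ that inherits expansiveness from $\phi$ (via the opposite disposition of $W^s_\epsilon(z)$ and $W^u_\epsilon(z)$ established in Step 2 at every point of the arc), thereby reducing the planar obstruction for expansive flows to the classical fact that no compact $1$-manifold admits an expansive homeomorphism.
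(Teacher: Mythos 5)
Your Steps 1 and 2 are sound (Step 1 is exactly how the paper begins), but Step 3 contains the genuine gap, and neither version of it closes. First, you work at a point $z\in\wlim(x)$ and treat $H_\epsilon(z)$ as an arc; but $\wlim(x)$ need not meet the planar open set $U$ at all, so at $z$ the cross section has unknown topology and planarity is unavailable precisely where your argument needs it. Second, the ``Poincar\'e return map'' workaround does not produce a self-homeomorphism of a compact arc: recurrence of the single orbit of $x$ only yields a partially defined return map between subsets of a section --- it need not be defined on all of $H_\epsilon(z)$, need not map the section into itself, and need not be onto --- so there is no compact $1$-manifold homeomorphism to which expansiveness could be transferred, and the classical nonexistence result for expansive homeomorphisms of arcs cannot be invoked. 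You correctly flag the other defect yourself: $C^u$ is controlled only under $\Phi_{-t}$, $t\geq 0$, and nothing bounds $\diam \Phi_t(x,C^u)$ for $t>0$ (expansivity suggests it grows), so the forward transport in the first version of Step 3 is simply undefined.

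The fix is much shorter and is the paper's actual argument; you are one observation away from it. Since $h(0)=0$ in the definition of $W^s_\epsilon$, one has $W^s_\epsilon(x)\subset H_\epsilon(x)$ automatically, so your $C^s$ is a nontrivial subcontinuum --- hence a subarc of positive diameter --- of the arc $H_\epsilon(x)$, and therefore contains points interior to it in the topology of the arc. Such an interior point $z$ is a \emph{stable} point: for small $\delta'$ the flow-box product structure of the planar chart projects $H_{\delta'}(z)$ along the flow into the portion of the arc near $z$, which lies in $C^s\subset W^s_\epsilon(x)$; every point of $H_{\delta'}(z)$ then shadows the orbit of $x$, and composing reparametrizations with the symmetry of $W^s_\epsilon$ shows it shadows the orbit of $z$ in sections of comparable size. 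The existence of a stable point contradicts Proposition \ref{propNoEst}, because $X$ cannot be a circle when it contains an open set homeomorphic to $\R^2$. Note that Proposition \ref{propNoEst} already packages the recurrence machinery you were re-deriving (its proof runs through $\wlim$ and positive expansivity via Proposition \ref{propPosExp}), and your Step 2 ``opposite disposition'' of $C^s$ and $C^u$, while correct, is not needed: only the stable continuum enters.
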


\begin{proof}
Let $X$ be a Peano continuum with a non-singular flow $\phi$. 
Suppose that $x\in X$ has a neighborhood $U$ that is homeomorphic with $\R^2$.
In this case we have that a connected cross section $H_\epsilon(x)$ is a 
compact arc. 
By Proposition \ref{propNonTrivalWs} we have that 
$W^s_\epsilon(x)$ contains a non-trivial continuum. 
But, since the cross section is an arc, 
we have that it contains an interior point (with respect to the topology of the arc). 
Then there are stable points.  
This contradicts Proposition \ref{propNoEst}.
\end{proof}

\begin{cor}[\cite{HS}]
No compact surface admits an expansive flow without singular points.
\end{cor}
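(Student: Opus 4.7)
My plan is to deduce this corollary directly from Theorem \ref{teoKawa}, which already does the substantive work. The strategy is simply to verify that a compact surface $X$ carrying an expansive non-singular flow satisfies the hypotheses of that theorem while also violating its conclusion, yielding a contradiction.

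First, I would handle the case where $X$ is disconnected. A compact surface is a finite disjoint union of connected compact surfaces. Since $\R$ is connected and the flow $\phi\colon\R\times X\to X$ is continuous, each orbit lies in a single connected component; hence every connected component of $X$ is flow-invariant. The restriction of $\phi$ to any component is again non-singular, and expansivity is inherited by closed invariant subsets (any bi-infinite $\delta$-tracing pair inside a component is in particular a bi-infinite $\delta$-tracing pair in $X$). So I may assume without loss of generality that $X$ is a connected compact surface.

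Next I would check that $X$ is a Peano continuum: it is connected and compact by assumption, metrizable as a compact manifold, and locally connected because it is locally Euclidean. Therefore Theorem \ref{teoKawa} applies to $X$ and tells us that no open subset of $X$ is homeomorphic to $\R^2$.

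Finally, I would derive the contradiction: since $X$ is a surface, every point $x\in X$ has, by definition, an open neighborhood homeomorphic to $\R^2$. This contradicts the conclusion of Theorem \ref{teoKawa}, so no such flow $\phi$ can exist. The main (and only) subtlety is the reduction to the connected case; once that is done, the corollary is immediate and essentially bookkeeping.
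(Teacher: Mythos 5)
Your proposal is correct and follows exactly the paper's route: the paper also derives the corollary as a direct consequence of Theorem \ref{teoKawa}, leaving the verification that a compact surface is a Peano continuum with open sets homeomorphic to $\R^2$ implicit. Your extra care with the disconnected case (reducing to a flow-invariant connected component, where expansivity and regularity clearly restrict) is sound bookkeeping, not a different argument.
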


\begin{proof}
  It is a direct consequence of Theorem \ref{teoKawa}.
\end{proof}

\begin{cor}[\cite{Kaw88}]
If a Peano continuum admits an expansive homeomorphism then 
no open set is homeomorphic with $\R$. 
In particular the circle and the interval do not admit expansive homeomorphisms.
\end{cor}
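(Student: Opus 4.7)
The plan is to deduce the corollary from Theorem \ref{teoKawa} via the classical suspension construction. Given an expansive homeomorphism $f\colon X\to X$ on a Peano continuum $X$, form the suspension $Y=(X\times\R)/\!\sim$ with $(x,t+1)\sim(f(x),t)$ and the suspension flow $\psi_s[x,t]=[x,t+s]$. This flow is continuous and has no singular points. First I would check that $Y$ is a Peano continuum: it is compact and connected as a quotient of $X\times[0,1]$, and a basis of connected neighborhoods of any $[x,t]$ is given by images of $V\times(a,b)$ where $V$ is a connected neighborhood of $x$ in $X$, using that $X$ is locally connected.

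Next I would verify that $\psi$ is an expansive flow in the sense of Bowen--Walters stated in the introduction. This is the standard fact: cross sections of the form $X\times\{t_0\}$ allow one to pass from a continuous reparametrization $h\colon\R\to\R$ keeping orbits $\delta$-close to an integer-indexed sequence of $f$-iterates of $x$ and $y$ that remain within an expansive constant for $f$, forcing $x=y$ up to a small flow time.

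Once these preliminaries are in place, suppose for contradiction that $X$ contained an open set $U$ homeomorphic to $\R$. Then the image of $U\times(0,1)$ in $Y$ is open in $Y$ and homeomorphic to $\R\times(0,1)\cong\R^2$. Since $Y$ is a Peano continuum carrying the non-singular expansive flow $\psi$, Theorem \ref{teoKawa} yields a contradiction, proving the first assertion. Both the circle and the interval are themselves Peano continua containing an open arc homeomorphic to $\R$, so the first part immediately rules them out as carriers of an expansive homeomorphism.

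The main obstacle is the careful verification that the suspension of an expansive homeomorphism is an expansive flow in the sense of \cite{BW}: one must reconcile the continuous reparametrization quantifier in the flow definition with the discrete expansivity of $f$, controlling how the parametrization can drift away from $t\mapsto t$ by at most a fraction of the flow period while still respecting the gluing $(x,t+1)\sim(f(x),t)$. The rest of the argument is a direct topological reduction.
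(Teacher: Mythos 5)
Your proposal is correct and follows essentially the same route as the paper: the paper's proof is a one-line reduction via the suspension, invoking the Bowen--Walters fact that a homeomorphism is expansive if and only if its suspension flow is expansive, and then applying Theorem \ref{teoKawa} to the suspension (a Peano continuum containing an open set homeomorphic to $\R^2$ whenever $X$ contains an open set homeomorphic to $\R$). Your additional verifications---that the suspension is a Peano continuum, that the flow is regular, and the sketch of the expansivity transfer, which the paper delegates to \cite{BW}---are exactly the details implicit in the paper's argument.
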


\begin{proof}
  It follows by Theorem \ref{teoKawa} recalling that a 
  homeomorphism is expansive if and only if its suspension is an expansive flow (see \cite{BW}).
\end{proof}

\begin{bibdiv}
\begin{biblist}

\bib{Ar}{article}{
author={A. Artigue},
title={Positive expansive flows},
journal={Topology Appl.},
volume={165},
year={2014},
pages={121--132}}


\bib{BP}{book}{
author={C. Bessaga},
author={A. Pelczynski},
title={Selected topics in infinite-dimensional topology},
year={1975},
publisher={Polish Scientific Publishers}}

\bib{Bing}{article}{
author={R. H. Bing},
title={Partitioning a set},
journal={Bull. Amer. Math. Soc.},
volume={55}, 
year={1949}, 
pages={1101--1110}}

\bib{BW}{article}{
author={R. Bowen and P. Walters}, title={Expansive One-Parameter
Flows}, journal={J. Differential Equations}, year={1972}, pages={180--193},
volume={12}}

\bib{Goodman}{article}{
journal={Topology},
volume={24},
year={1985}, 
pages={333--340},
title={Vector fields with transverse foliations},
author={S. Goodman}}


\bib{HS}{article}{
author={L. F. He},
author={G. Z. Shan},
title={The nonexistence of expansive flow on a compact 2-manifold},
journal={Chin. Ann. Math. Ser. B}, 
volume={12},
year={1991}, 
pages={213--218}}


\bib{IN}{book}{
author={A. Illanes},
author={S. B. Nadler Jr.},
title={Hyperspaces: Fundamentals and Recent Advances},
year={1999},
publisher={Marcel Dekker, Inc.}}

\bib{Ka93}{article}{
author={H. Kato},
title={Continuum-wise expansive homeomorphisms},
journal={Canad. J. Math.},
volume={45},
number={3},
year={1993},
pages={576--598}}

\bib{Kaw88}{article}{
author={K. Kawamura},
title={A direct proof that each Peano continuum with a free arc admits no expansive homeomorphism},
journal={Tsukuba J. Math.},
volume={12},
pages={521--524},
year={1988}}

\bib{KS}{article}{
author = {H. B. Keynes},
author={M. Sears},
title = {Real-expansive flows and topological dimension},
journal = {Ergodic Theory Dynam. Systems},
volume = {1},
year = {1981},
pages = {179--195},}

\bib{Lew}{book}{
author={J. Lewowicz},
title={Lyapunov functions and stability of geodesic flows},
year={1983},
publisher={Springer},
series={Lecture Notes in Math.},
volume={1007},
pages={463--479}}


\bib{Milnor}{article}{
author={J. Milnor}, 
title={Microbundles Part I}, 
journal={Topology},
volume={3},
year={1964}, 
pages={53--80}}

\bib{Moise}{article}{
author={E. E. Moise}, 
title={Grille decomposition and convexification theorems for compact 
metric locally connected continua},
journal={Bull. Amer. Math. Soc.},
volume={55},
year={1949},
pages={1111-1121}}

\bib{MSS}{article}{
author={K. Moriyasu},
author={K. Sakai},
author={W. Sun},
title={$C^1$-stably expansive flows},
journal={J. Differential Equations},
volume={213},
year={2005},
pages={352--367}}

\bib{Oka}{article}{
author={M. Oka},
title={Singular foliations on cross-sections of expansive flows on 3-manifolds},
journal={Osaka J. Math.},
volume={27},
pages={863--883},
year={1990}}

\bib{Pat}{article}{
author={M. Paternain},
title={Expansive Flows and the Fundamental Group},
journal={Bull. Braz. Math. Soc.},
year={1993},
volume={24},
number={2},
pages={179--199}
}

%

\bib{Th87}{article}{
author={R. F. Thomas},
title={Entropy of expansive flows}, 
journal={Ergodic Theory Dynam. Systems},
year={1987},
pages={611--625},
volume={7}}

\bib{W}{article}{
author={H. Whitney},
title={Regular Family of Curves},
journal={Ann. of Math.},
year={1933},
volume={34},
number={2},
pages={244--270}}

\end{biblist}
\end{bibdiv}
\end{document}